\newtheorem{theorem}{Theorem}[section]
\newtheorem{lemma}[theorem]{Lemma}
\newtheorem{definition}[theorem]{Definition}
\newtheorem{prop}[theorem]{Proposition}
\newtheorem{corollary}[theorem]{Corollary}
\theoremstyle{definition}
\newtheorem*{remark}{Remark}
\numberwithin{equation}{section}
\newcommand{\eoq}[2]{\frac{|E|^{#1}}{q^{#2}}}
\newcommand{\oq}[1]{O_{#1}({\mathbb{F}_q})}
\newcommand{\ppc}{p}
\newcommand{\oqsize}[1]{| O_{d-#1}(\mathbb{F}_q) | }
\newcommand{\stsum}{\sum_{\delta \in \mathbb{D}} \nu^2_{\TT}(\delta)}
\newcommand{\stsumc}{\sum_{\delta \in \mathbb{D}} \nu^2_{\CC}(\delta)}
\newcommand{\TT}{\mathcal{T}}
\newcommand{\CC}{\mathcal{C}}
\newcommand{\geqsim}{\gtrsim}
\newcommand{\Stab}{\mathrm{Stab}}
\newcommand{\FF}{\mathbb{F}}
\newcommand{\fqd}{\mathbb{F}_q^d}
\def\subsection{\@startsection{subsection}{2}%
  \z@{.5\linespacing\@plus.7\linespacing}
{.5\baselineskip}%
  {\normalfont\centering\scshape}%
}
\author{Timothy Cheek}
\address{Department of Mathematics, University of Michigan}
\email{\href{mailto:timcheek@umich.edu}{timcheek@umich.edu}}
\author{Joseph Cooper}
\address{Department of Pure Mathematics and Mathematical Statistics, University of Cambridge}
\email{\href{mailto:jc2407@cam.ac.uk}{jc2407@cam.ac.uk}}
\author{Pico Gilman}
\address{Department of Mathematics, University of California Santa Barbara}
\email{\href{mailto:picogilman@ucsb.edu}{picogilman@ucsb.edu}}
\author{Alex Iosevich}
\address{Deparment of Mathematics, University of Rochester}
\email{\href{mailto:alex.iosevich@rochester.edu}{alex.iosevich@rochester.edu}}
\author{Kareem Jaber}
\address{Department of Mathematics, Princeton University}
\email{\href{mailto:kj5388@princeton.edu}{kj5388@princeton.edu}}
\author{Eyvindur Palsson}
\address{Deparment of Mathematics, Virginia Tech}
\email{\href{mailto:palsson@vt.edu}{palsson@vt.edu}}
\author{Vismay Sharan}
\address{Department of Mathematics, Yale University}
\email{\href{mailto:vismay.sharan@yale.edu}{vismay.sharan@yale.edu}}
\author{Jenna Shuffelton}
\address{Department of Mathematics and Statistics, Williams College}
\email{\href{mailto:jms13@williams.edu}{jms13@williams.edu}}
\author{Marie-H\'el\`ene Tom\'e}
\address{Department of Mathematics, Duke University}
\email{\href{mailto:mariehelene.tome@duke.edu}{mariehelene.tome@duke.edu}}
\renewcommand*\@maketitle{%
  \normalfont\normalsize
  \@adminfootnotes
  \@mkboth{\@nx\shortauthors}{\@nx\shorttitle}%
  \global\topskip42\p@\relax 
  \@settitle
  \ifx\@empty\authors \else \@setauthors \fi
  \ifx\@empty\@date \else {\vskip 1em \vtop{\centering\large\@date\@@par}}\fi
  \ifx\@empty\@dedicatory
  \else
    \baselineskip18\p@
    \vtop{\centering{\footnotesize\itshape\@dedicatory\@@par}%
      \global\dimen@i\prevdepth}\prevdepth\dimen@i
  \fi
  \@setabstract
  \normalsize
  \if@titlepage
    \newpage
  \else
    \dimen@34\p@ \advance\dimen@-\baselineskip
    \vskip\dimen@\relax
  \fi
} 
\renewcommand*\@adminfootnotes{%
  \let\@makefnmark\relax  \let\@thefnmark\relax
  \ifx\@empty\@subjclass\else \@footnotetext{\@setsubjclass}\fi
  \ifx\@empty\@keywords\else \@footnotetext{\@setkeywords}\fi
  \ifx\@empty\thankses\else \@footnotetext{%
    \def\par{\let\par\@par}\@setthanks}%
  \fi
}
\let\oldtocsection=\tocsection
\let\oldtocsubsection=\tocsubsection
\renewcommand{\tocsection}[2]{\hspace{0em}\oldtocsection{#1}{#2}}
\renewcommand{\tocsubsection}[2]{\hspace{2em}\oldtocsubsection{#1}{#2}}
\begin{document}
\date{\today}
\title{Congruence Classes of Simplex Structures in Finite Field Vector Spaces}




\numberwithin{equation}{section}

\setcounter{section}{0}

\begin{abstract}
We study a generalization of the Erd\H{o}s-Falconer distance problem over finite fields. For a graph $G$, two embeddings $p, p': V(G) \to \mathbb{F}_q^d$ of a graph $G$ are congruent if for all edges $(v_i, v_j)$ of $G$ we have that $||p(v_i) - p(v_j)|| = ||p'(v_i) - p'(v_j)||$. What is the infimum of $s$ such that for any subset $E\subset \mathbb{F}_q^d$ with $|E| \gtrsim q^s$, $E$ contains a positive proportion of congruence classes of $G$ in $\mathbb{F}_q^d$? Bennett et al. and McDonald used group action methods to prove results in the case of $k$-simplices. The work of Iosevich, Jardine, and McDonald as well as that of Bright et al. have proved results in the case of trees and trees of simplices, utilizing the inductive nature of these graphs.

Recently, Aksoy, Iosevich, and McDonald combined these two approaches to obtain nontrivial bounds on the ``bowtie" graph, two triangles joined at a vertex. Their proof relies on an application of the Hadamard three-lines theorem to pass to a different graph. We develop novel geometric techniques called branch shifting and simplex unbalancing to reduce our analysis of trees of simplices to a much smaller class of simplex structures. This allows us to establish a framework that handles a wide class of graphs exhibiting a combination of rigid and loose behavior. In $\mathbb{F}_q^2$, this approach gives new nontrivial bounds on chains and trees of simplices. In $\mathbb{F}_q^d$, we improve on the results of Bright et al. in many cases and generalize their work to a wider class of simplex trees. We discuss partial progress on how this framework can be extended to more general simplex structures, such as cycles of simplices and structures of simplices glued together along an edge or a face.
\end{abstract}

\maketitle

\begin{center}
\textit{``The shortest path between two truths in the real domain passes through the complex domain."}

 - Jacques Hadamard
\end{center}

\tableofcontents

\section{Introduction}

The Erd\H{o}s distance problem is a classical problem in geometric combinatorics. Originally introduced by Erd\H{o}s in 1946, it asks for the smallest number of distinct distances determined by a subset of $\mathbb{R}^d$ under the standard Euclidean metric. The original conjecture in \cite{erdos} states that for a subset $P\subset \mathbb{R}^d$, $d\geq 2$, with size $|P|=N$, the least number of distances determined by $P$ is on the order of $N^{\frac{2}{d}-\epsilon}$ for $\epsilon>0$. The case of $d=2$ was resolved by Guth and Katz in 2011 within a $\sqrt{\log N}$ factor of the best known lower bound (\cite{guthkatz}) . The case where $d\geq 3$ is still open with best known bounds from Solymosi and Vu in 2008 (\cite{vusolymosi}). The Falconer distance problem is similar in nature to the original Erd\H{o}s distance problem and asks for the minimum Hausdorff dimension of a set $E\subset \mathbb{R}^d$ needed to achieve a positive Lebesgue measure of distances.  In \cite{ogfalconer}, Falconer conjectured that if the Hausdorff dimension of $E$ is strictly greater than $\frac{d}{2}$, the set of distances $\{||x - y||, \, x,y \in E \}$ with the standard Euclidean metric has positive (one-dimensional) Lebesgue measure.  This conjecture is still open in all dimensions.  For $d=2$, the best threshold is $\frac{5}{4}$ from Guth et al. in 2018 (\cite{falconer2d}), and for $d\geq3$ the best known bound is $\frac{d}{2}+\frac{1}{4}-\frac{1}{8d+4}$ were found by Du et al. in 2023 (\cite{falconerhigherd}). \bigskip

In \cite{iosevichrudnev}, Iosevich and Rudnev introduced the Erd\H{o}s-Falconer distance problem over finite fields, an arithmetic analogue to these two problems.  In $\mathbb{F}_q^d$, for $q$ a prime power, we define the distance between two points $x, y \in \mathbb{F}_q^d$ as
\begin{align}
    ||x - y||\ =\ (x_1 - y_1)^2 + \cdots + (x_d - y_d)^2. \label{dist defn}
\end{align}

The Erd\H{o}s-Falconer distance problem over $\fqd$ asks for the minimum size of a subset $E$ such that pairs of points in $E$ determine all possible distances in $\mathbb{F}_q$, or more generally, a positive proportion of distances in $\mathbb{F}_q$\footnote{Although this is not a metric, it is standard in the literature to refer to it as a distance due to historical connections to the Erdos and Falconer distance problems.}. More precisely, it asks for the infimum of all $s \in \mathbb{R}$ such that $|E| \gtrsim q^s$ implies that $E$ contains all distances, or a positive proportion of distances, in $\mathbb{F}_q$, where $q$ is treated as an asymptotic parameter.

\begin{remark}
    Here, as in most other papers on the subject, we write $X \lesssim Y$ to mean there is a constant $C$ with $X \leq CY$ independent of $q$, and similarly, we write $X \geqsim Y$ if $X \geq CY$ for a constant $C$ independent of $q$. We say $X \approx Y$, or that $X$ is on the order of $Y$, if $X \lesssim Y \lesssim X$.
\end{remark}

This problem has been actively researched over the past two decades.  In \cite{iosevichrudnev}, it was proved that for $E \subset \fqd$ and $s = \frac{d+1}{2}$, if $E \gtrsim q^s$ then $E$ contains all distances in $\mathbb{F}_q$.  In \cite{hyperplanes}, the authors showed that this $s$ is tight in odd dimensions even for the weaker problem of considering a positive proportion of distances.  In $d = 2$, this exponent has been improved when considering a positive proportion of distances, first to $\frac{4}{3}$ in \cite{pinneddistsets}, and recently to $\frac{5}{4}$ in \cite{fivefourths}, which is the best known result as of writing.  \bigskip 


An interesting analogue of the original Erd\H{o}s-Falconer distance problem in $\fqd$
generalizes the notion of the distance set determined by a set of points $E$ in $\mathbb{F}_q^d$.  

\begin{definition}
    Given a graph $G$, we say that two embeddings $p, p': V(G) \to \mathbb{F}_q^d$ of a graph $G$ are congruent if for every $(v_i, v_j)$ in the edge set of $G$, we have that $||p(v_i) - p(v_j)|| = ||p'(v_i) - p'(v_j)||$. 
\end{definition} 

For a general graph $G$, what is the infimum of $s$ such that $|E| \geqsim q^s$ implies that $E$ contains an embedding of $G$ in each congruence class, or a positive proportion of congruence classes of $G$ in $\mathbb{F}_q^d$?  The original Erd\H{o}s-Falconer distance problem over $\fqd$ relates to the case where $G$ is the complete graph on two vertices.  \bigskip

A natural graph to consider in this generalization is the complete graph on $k+1$ vertices, whose embeddings correspond to $k$-simplices in $\mathbb{F}_q^d$.  To this end, we often refer to both the complete graph on $k+1$ vertices and embeddings of this graph in $\mathbb{F}_q^d$ as simplices of dimension $k$.  In \cite{groupactions}, Bennett et al. proved that for $n \leq d$, $E \subset \mathbb{F}_q^d$, and $s = \frac{dn+1}{n+1}$, then $E$ contains a positive proportion of congruence classes of $n$-simplices in $\fqd$ whenever $|E| \gtrsim q^s$.  To achieve this, the authors adapted group action methods from the continuous case of the Falconer distance problem, see \cite{eyvipaper}.  While the distance function in \eqref{dist defn} isn't a norm, it is still invariant under transformations in $\oq{d}$, the group of orthogonal transformations over $\fqd$ with respect to the standard inner product.  These group action methods over finite fields have found success in proving results for rigid graphs.  For $d=2$, \cite{groupactions} improved the $s = \frac{dn+1}{n+1}$ exponent for 2-simplices due to a stronger bound on the finite field analogue of the Mattila integral in this setting.  In 2019, McDonald \cite{alexmcdonald} used group action techniques to extend the results of \cite{groupactions} in both the $\fqd$ and $\mathbb{F}_q^2$ settings to $n$-simplices for $n \geq d$.  \bigskip

Results have been proven for loose graphs as well, such as paths, cycles, and trees, where a group action approach is less clear.  In this setting, success was found in using techniques that take advantage of the inductive nature of these graphs.   In \cite{longpaths}, the authors proved that for $G$ a path of arbitrary length, $E \subset \mathbb{F}_q^d$ and $s = \frac{d+1}{2}$, if $|E| \gtrsim q^{s}$, then $E$ contains at least one embedding of $G$ in every congruence class, and so $E$ contains all congruence classes of $G$ in $\mathbb{F}_q^d$.  Similar results have been proved in \cite{cycles} when $G$ is a tree for $s = \frac{d+1}{2}$.  These results have been generalized in \cite{small2023}, where for simplices of sufficiently small dimension relative to the ambient space, the authors proved that for $E \subset \mathbb{F}_q^d$, and $s = m+\frac{d-1}{2}$, if $|E| \gtrsim q^s$ then $E$ contains all congruence classes of embeddings of a certain class of trees of $m$-simplices. \bigskip

In \cite{edgedeletion}, Iosevich and Parshall proved a much more general result: for any connected graph $G$ with maximum vertex degree $t$, $E \subset \mathbb{F}_q^d$, and $s = t + \frac{d-1}{2}$, if $|E| \gtrsim q^s$ then $E$ contains at least one embedding of $G$ in every congruence class.  While this result gives nontrivial $s$ for any $G$ in a sufficiently high dimension space, the specialized results of \cite{groupactions} and \cite{alexmcdonald} yield better results on high-dimensional simplices, and \cite{small2023} yields better results on the class of simplex trees they handle for all dimensions of the space.  This suggests that the techniques that are used to work with a graph $G$ should vary with the properties that $G$ exhibits. \bigskip


Very recently, in \cite{preprint}, Aksoy, Iosevich, and McDonald took a step toward combining the group action techniques for simplices and the inductive techniques for paths and trees to handle the bowtie graph $B$ shown in Figure \ref{bowtieandkite}, a graph that exhibits a combination of rigid and loose behavior.  To do this, the authors first reduced the problem to bounding a sum corresponding to the number of pairs of congruent embeddings of $B$ in $E$, following a similar setup as in \cite{groupactions}.  Then, by the Hadamard three-lines theorem from complex analysis, they convert the problem from bounding the sum corresponding to $B$ (a 2-simplex attached to a 2-simplex) to a sum corresponding to the kite graph $K$ (a 3-simplex attached to a 1-simplex), as shown in Figure~\ref{bowtieandkite}. \bigskip

\begin{figure}[hbt!]
\centering

\tikzset{every picture/.style={line width=0.75pt}} 

\begin{tikzpicture}[x=0.75pt,y=0.75pt,yscale=-1,xscale=1]

\draw   (170.33,49) -- (104.33,85.33) -- (104.33,12.67) -- cycle ;
\draw   (170.33,49) -- (236.33,12.67) -- (236.33,85.33) -- cycle ;
\draw    (436,82) -- (504.67,82) ;
\draw   (329.25,21) -- (403.98,21) -- (436,82) -- (361.28,82) -- cycle ;
\draw    (403.98,21) -- (361.28,82) ;
\draw    (329.25,21) -- (436,82) ;

\draw (163,98) node [anchor=north west][inner sep=0.75pt]   [align=left] {$\displaystyle B$};
\draw (400,98) node [anchor=north west][inner sep=0.75pt]   [align=left] {$\displaystyle K$};

\end{tikzpicture}

\caption{The ``bowtie'' graph, $B$, and the ``kite'' graph, $K$.}
\label{bowtieandkite}
\end{figure}
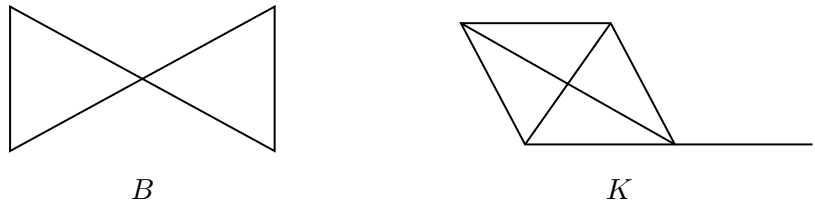

Morally, as paths are much more abundant than high-dimensional simplices, if $E$ is large enough to contain a positive proportion of congruence classes of 3-simplices, then $E$ should also contain a positive proportion of congruence classes of $K$. To prove this rigorously, the authors split up the sum corresponding to $K$ to isolate the contributions from the path and those from the 3-simplex.  Then, the path terms can be bounded using the inductive techniques from \cite{longpaths}, and the 3-simplex terms could be bounded using the group action techniques from \cite{groupactions}. \bigskip

In this paper, we generalize the techniques of \cite{preprint} to obtain nontrivial bounds on $s$ for a much wider and more complex class of graphs exhibiting a combination of rigid and loose structure.  In particular, we focus our attention on a class of graphs that we call simplex trees.

\begin{definition}
For a simplex $S$, let $V(S)$ denote the set of vertices of $S$. Let $\TT$ be a single simplex or let $\TT = \{S_i : i \in I\}$ be a finite set of simplices. We denote by $\mathcal{N}_\TT(S_i)$ the set of all simplices in $\TT$ sharing a vertex with $S_i$. We drop the subscript $\TT$ when it is clear from context.  We say $\TT$ is a \textbf{(connected) simplex tree} if $|\TT| = 1$ or if $|\TT| > 1$ and $\TT$ satisfies the following properties:
\begin{enumerate}
    \item For any $i, j \in I$ distinct, we have that $|V(S_i) \cap V(S_j)| \leq 1$.
    \item For each $a, b \in I$, there exists a path of simplices in $\TT$ from $S_a$ to $S_b$, i.e., there exist distinct integers $i_1,\ldots,i_n \in I$ such that $S_{i_1} = S_a$, $S_{i_n} = S_b$ and $S_{i_{j+1}} \in \mathcal{N}(S_{i_{j}})$ for each $1 \leq j \leq n-1$.
    \item There is no closed loop of simplices, i.e. there do not exist distinct integers $i_1, \ldots, i_n \in I$ such that for each $1 \leq j \leq n$, we have that $S_{i_{j+1}} \in \mathcal{N}(S_{i_j})$ and $S_{i_n} \in \mathcal{N}(S_{i_1})$.
Note that for a connected simplex tree, $S_i \in \mathcal{N}(S_j)$ if and only if $|V(S_i) \cap V(S_j)| = 1$. 
\end{enumerate}

\end{definition}

Our definition may be viewed as a natural generalization of an undirected tree, where each simplex may be viewed as a vertex and each vertex shared by two simplices may be viewed as an edge between them. 

\begin{remark}
    Our definition of a simplex tree is more general than the definition given in \cite{small2023}. Their class of simplex trees is formed by taking a standard graph-theoretic tree, and to each edge, attaching a $k$-simplex sharing this edge.  This means that for any simplex in a tree considered by \cite{small2023}, only two of its vertices may be shared by other simplices.  Under our definition of simplex trees, however, a simplex can share any number of its vertices with other simplices. Secondly, while the definition in \cite{small2023} requires all simplices in the tree to have the same dimension, in our definition, simplices can have different dimensions throughout the tree.
\end{remark}

\subsection{Statement of main results}

Our main theorem deals with how big $E$ needs to be to ensure it contains a positive proportion of congruence classes of $\TT$.

\begin{theorem} \label{maintheoremd}
    For a simplex tree $\TT$ and an integer $k$ such that $1 \leq k < \frac{d+1}{2}$, define 
    \begin{equation}
        N_k\ =\ k+\sum_{\substack{S \in \TT \\ \dim(S) > k}} (\dim(S) -k).
    \end{equation}
    For an $E \subset \FF_q^d$, suppose that for $s =\max \left(\frac{dN_k + 1}{N_k+1}, \, \, k + \frac{d-1}{2} \right) $, $|E|\ \gtrsim\ q^{s}$. Then $E$ contains a positive proportion of congruence classes of embeddings of $\TT$ in $\FF_q^d$.
\end{theorem}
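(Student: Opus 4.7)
The plan is to follow the standard counting strategy: let $\nu_{\TT}(\delta)$ be the number of embeddings of $\TT$ into $E$ realizing a prescribed tuple of edge distances $\delta$, and use Cauchy--Schwarz to reduce the theorem to showing
\[
\stsum\ \lesssim\ \frac{|E|^{2|V(\TT)|}}{q^{|E(\TT)|+1}},
\]
so that the number of realized congruence classes is $\gtrsim q^{|E(\TT)|}$. Thus the heart of the argument is an upper bound on $\stsum$, and the two terms in the max in the statement should come from two different contributions to this sum: a ``rigid'' piece handled by group action methods (in the spirit of \cite{groupactions}, \cite{alexmcdonald}) and a ``loose'' piece handled by inductive edge--deletion methods (in the spirit of \cite{longpaths}, \cite{edgedeletion}).

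The key move, as announced in the abstract, is to use \emph{branch shifting} and \emph{simplex unbalancing} to reduce the geometry of $\TT$ to a much simpler configuration before estimating the sum. Mimicking the Hadamard three-lines argument from \cite{preprint}, I would interpret $\stsum$ as (a value of) an analytic function on a complex strip whose boundary values correspond to two different simplex trees $\TT_0$ and $\TT_1$ obtained from $\TT$ by legal modifications. Simplex unbalancing should allow me to trade dimension between two simplices meeting at a common vertex: replace a pair of simplices of dimensions $(m_1,m_2)$ sharing a vertex by a pair of dimensions $(m_1-1, m_2+1)$, while preserving the total $N_k$. Branch shifting should allow me to move a subtree hanging off one vertex of a simplex to a different vertex of the same simplex without changing the counting function. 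Iterating these operations I can concentrate essentially all of the ``excess dimension'' above $k$ into a single simplex of dimension exactly $N_k$, leaving a residual tree whose simplices all have dimension at most $k$.

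For the canonical tree $\TT^*$ produced this way, I would partition the sum $\stsummod{*}$ according to the simplex of dimension $N_k$ versus the remaining simplices. On the big-simplex portion I invoke the group-action bound of \cite{groupactions} and \cite{alexmcdonald} for $N_k$-simplices, which is exactly what produces the exponent $\frac{dN_k+1}{N_k+1}$. On the remaining low-dimensional portion, every simplex has dimension at most $k$, and using the Iosevich--Parshall edge-deletion framework (treating each simplex of dimension $\leq k$ as a cluster of at most $k$ edges hanging from a root vertex) produces the bound corresponding to the exponent $k+\frac{d-1}{2}$. Combining the two estimates via H\"older or direct multiplication of the pointwise bounds, together with the reduction from branch shifting/unbalancing, gives the threshold $\max\bigl(\tfrac{dN_k+1}{N_k+1}, k+\tfrac{d-1}{2}\bigr)$. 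The constraint $k<\frac{d+1}{2}$ enters precisely because the bound $k+\frac{d-1}{2}$ is only nontrivial (i.e.\ below $d$) in that range.

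The main obstacle will be justifying the branch shifting and simplex unbalancing rigorously at the level of the sum $\stsum$. The analytic interpolation requires that one identify an auxiliary sum $F(z)$, holomorphic and suitably bounded on a strip, whose values at the endpoints correspond to the genuine counting sums for $\TT_0$ and $\TT_1$, and whose value on an intermediate line controls $\stsum$ for the original $\TT$. Setting up the correct complex interpolation---in particular keeping track of which orthogonal-group orbit sums are being interpolated and ensuring the three-lines estimate actually passes from paths of simplices to arbitrary branchings---is the step where the novel geometric content lives, and where one has to be careful that none of the modifications inflate the count beyond the targeted power of $|E|/q$. Once this reduction is in place, the final bookkeeping combining the group-action and edge-deletion bounds should be routine.
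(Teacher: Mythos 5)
Your high-level strategy matches the paper's: Cauchy--Schwarz reduction, Hadamard three-lines interpolation to simplify the tree, reduce to one large simplex of dimension $N_k$ carrying a small residual tree, and bound by combining group-action estimates with inductive estimates. But several pieces of your proposal are off in ways that would break a real execution.

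First, your target bound
\[
\stsum \lesssim \frac{|E|^{2|V(\TT)|}}{q^{|E(\TT)|+1}}
\]
is wrong. The number of congruence classes of $\TT$ is $\approx q^{c(\TT)}$, where $c(\TT)=\sum_{S\in\TT}c(S)$ with $c(S)=\binom{\dim(S)+1}{2}$ for $\dim(S)\leq d$ but $c(S)=d(\dim(S)+1)-\binom{d+1}{2}$ for $\dim(S)\geq d$. So the correct target is $\stsum\lesssim |E|^{2|V(\TT)|}/q^{c(\TT)}$, with no stray $+1$, and $c(\TT)\neq |E(\TT)|$ whenever the tree contains a simplex of dimension $>d$. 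Getting this right matters, because the entire bookkeeping of Lemmas 4.7 and 4.9 hinges on tracking how $c(\TT)$, the stabilizer sizes $\Stab(\dim(S))$, and $|V(\TT)|$ transform under the interpolation moves.

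Second, your description of branch shifting as moving a subtree ``without changing the counting function'' is not correct. Both branch shifting and simplex unbalancing change $\stsum$; what the three-lines argument gives is only the inequality $\ppc(\TT)\leq\max(\ppc(\TT_1),\ppc(\TT_2))$ relating the thresholds of the original tree and the two endpoint trees. Moreover, simplex unbalancing in the paper applies to any two simplices in $\TT$, not only adjacent ones, and it trades a \emph{block} of free vertices, not one vertex at a time with the partner necessarily sharing a vertex.

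Third, and most substantively, using the Iosevich--Parshall edge-deletion framework to handle the residual tree of $k$-simplices would \emph{not} give the exponent $k+\frac{d-1}{2}$. That framework produces $t+\frac{d-1}{2}$ where $t$ is the maximum vertex degree, and in the reduced configuration (where several $k$-simplices share a vertex, and the whole tree hangs from a vertex of the large simplex) the vertex degree is larger than $k$. The paper instead uses the $k$-weak simplex tree estimate of Bright et al.\ (Theorem 1.12 of \cite{small2023}), repackaged in Lemma 3.1 as $L^1$, $L^2$ norms of the tree-counting function $f_{T_k}$. Plugging those into a Mattila-type estimate for $\Gamma_{\theta,T_k}$ and running an inductive decomposition of $\sum_{\theta,w}\lambda_\theta^n\Gamma_{\theta,T_k}$ into three terms (isolating the mean contributions of both $\lambda_\theta$ and $\Gamma_{\theta,T_k}$) is what actually produces the $k+\frac{d-1}{2}$ threshold. ``Routine bookkeeping combining the two bounds'' or a direct H\"older step would not yield this; the paper devotes all of Section 3 to this combination, and the induction in Lemmas 3.2 and 3.3 is where the two thresholds in the max genuinely emerge.
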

Note that for a given $\TT$ and $k$, Theorem \ref{maintheoremd} results in some $s < d$, and therefore this result is nontrivial for all simplex trees $\TT$. Naively, for any graph $G$ and a space $\mathbb{F}_q^d$, one can find a nontrivial $s$ (i.e. $s < d$) such that for any $E \subset \mathbb{F}_q^d$ and $|E| \gtrsim q^s $, $E$ contains a positive proportion of congruence classes of $G$ in $\fqd$ by first finding an $s$ for the complete graph on the number of vertices of $G$, and then applying the corresponding results of \cite{groupactions} and \cite{alexmcdonald}. However, Theorem \ref{maintheoremd} and all subsequent theorems in the introduction yield better bounds than this method.
\bigskip

The idea of the proof of Theorem \ref{maintheoremd} is as follows.  We first generalize the setup of \cite{groupactions} to reduce the problem to bounding a sum corresponding to the number of pairs of congruent embeddings of $\TT$ in $E$.  We develop two techniques called \textit{branch shifting} and \textit{simplex unbalancing} that algebraically modify this sum in ways that correspond to geometric modifications of $\TT$.  This allows us to reduce to the problem to bounding a sum corresponding to the modified $\TT$.  We apply these techniques repeatedly until our modified $\TT$ consists of one large simplex of dimension $N_k$ with trees of $k$-simplices attached to a single vertex.  From here, we use Fourier analytic techniques to isolate the contributions of each of these two different structures from the sum, handling the large simplex with group action techniques in the spirit of \cite{groupactions} and \cite{alexmcdonald} and handling the smaller trees of simplices with the inductive techniques from \cite{cycles} and \cite{small2023}.  Once $E$ is large enough to contain a positive proportion of congruence classes of $N_k$-simplices and all congruence classes of trees of $k$-simplices, which occur at $s = \frac{dN_k + 1}{N_k + 1}$ and $s = k + \frac{d-1}{2}$, respectively, we show that $E$ contains a positive proportion of congruence classes of $\TT$. Therefore, the geometry of our reduced structure dictates our threshold for $s$. \bigskip

The value of $k$ in Theorem \ref{maintheoremd} can be viewed as a parameter that determines how much the resulting $s$ relies on inductive techniques rather than group action techniques. When $k$ is small, we modify $\TT$ to have a very large central simplex and simplex trees of very small dimension attached to a vertex. Then, the strength of our result relies much more heavily on group action techniques rather than inductive techniques. On the other hand, when $k$ is large, we modify $\TT$ to have a smaller central simplex, while the simplex trees sticking out of it are of higher dimension. In this scenario, the group action techniques yield a smaller $s$, at the cost of inductive techniques yielding a larger $s$ to guarantee that $E$ contains all congruence classes of trees of $k$-simplices. The flexibility we gain by adaptively scaling between group action techniques and inductive techniques yields a vast generalization of the methods of \cite{preprint}. See Figure \ref{kscalingfig} for an illustration of how scaling in $k$ affects the modification of $\TT$. Note that the edges inside of the simplices are omitted for clarity. \bigskip

\begin{figure}[hbt!]
\centering

\tikzset{every picture/.style={line width=1pt}} 

\begin{tikzpicture}[x=0.75pt,y=0.75pt,yscale=-.75,xscale=.75]

\draw   (171.8,113.81) -- (196.13,155.63) -- (147.47,155.63) -- cycle ;
\draw    (196.13,155.63) -- (205.59,191.13) ;
\draw   (100.65,166.22) -- (147.47,155.63) -- (158.07,202.45) -- (111.25,213.04) -- cycle ;
\draw   (171.8,113.81) -- (147.47,72) -- (196.13,72) -- cycle ;
\draw   (205.59,191.13) -- (248.67,212.32) -- (227.48,255.39) -- (184.41,234.2) -- cycle ;
\draw  [color={rgb, 255:red, 245; green, 166; blue, 35 }  ,draw opacity=1 ] (453.05,77.95) -- (439.57,110.48) -- (407.05,123.95) -- (374.52,110.48) -- (361.05,77.95) -- (374.52,45.43) -- (407.05,31.95) -- (439.57,45.43) -- cycle ;
\draw [color={rgb, 255:red, 189; green, 16; blue, 224 }  ,draw opacity=1 ]   (439.57,45.43) -- (479.67,30) ;
\draw [color={rgb, 255:red, 189; green, 16; blue, 224 }  ,draw opacity=1 ]   (479.67,30) -- (516.67,45) ;
\draw [color={rgb, 255:red, 189; green, 16; blue, 224 }  ,draw opacity=1 ]   (516.67,45) -- (534.67,88) ;
\draw [color={rgb, 255:red, 189; green, 16; blue, 224 }  ,draw opacity=1 ]   (516.67,45) -- (564.67,37) ;
\draw  [color={rgb, 255:red, 245; green, 166; blue, 35 }  ,draw opacity=1 ] (476.38,255.92) -- (454.04,294.62) -- (409.35,294.62) -- (387,255.92) -- (409.35,217.21) -- (454.04,217.21) -- cycle ;
\draw  [color={rgb, 255:red, 189; green, 16; blue, 224 }  ,draw opacity=1 ] (474.95,181.28) -- (495.85,217.21) -- (454.04,217.21) -- cycle ;
\draw  [color={rgb, 255:red, 189; green, 16; blue, 224 }  ,draw opacity=1 ] (474.95,181.28) -- (454.04,145.34) -- (495.85,145.34) -- cycle ;
\draw  [color={rgb, 255:red, 189; green, 16; blue, 224 }  ,draw opacity=1 ] (516.76,253.15) -- (495.85,217.21) -- (537.67,217.21) -- cycle ;
\draw  [color={rgb, 255:red, 189; green, 16; blue, 224 }  ,draw opacity=1 ] (516.76,253.15) -- (537.67,289.09) -- (495.85,289.09) -- cycle ;
\draw    (250.67,129) -- (330.87,89.88) ;
\draw [shift={(332.67,89)}, rotate = 154] [color={rgb, 255:red, 0; green, 0; blue, 0 }  ][line width=0.75]    (10.93,-3.29) .. controls (6.95,-1.4) and (3.31,-0.3) .. (0,0) .. controls (3.31,0.3) and (6.95,1.4) .. (10.93,3.29)   ;
\draw    (272,208) -- (354.77,236.35) ;
\draw [shift={(356.67,237)}, rotate = 198.91] [color={rgb, 255:red, 0; green, 0; blue, 0 }  ][line width=0.75]    (10.93,-3.29) .. controls (6.95,-1.4) and (3.31,-0.3) .. (0,0) .. controls (3.31,0.3) and (6.95,1.4) .. (10.93,3.29)   ;

\draw (260.66,102.96) node [anchor=north west][inner sep=0.75pt]  [rotate=-333.32]  {$k=1$};
\draw (298.87,196.08) node [anchor=north west][inner sep=0.75pt]  [rotate=-20.4]  {$k=2$};

\end{tikzpicture}
\caption{Two modifications of a simplex tree corresponding to different $k$.}
\label{kscalingfig}
\end{figure}
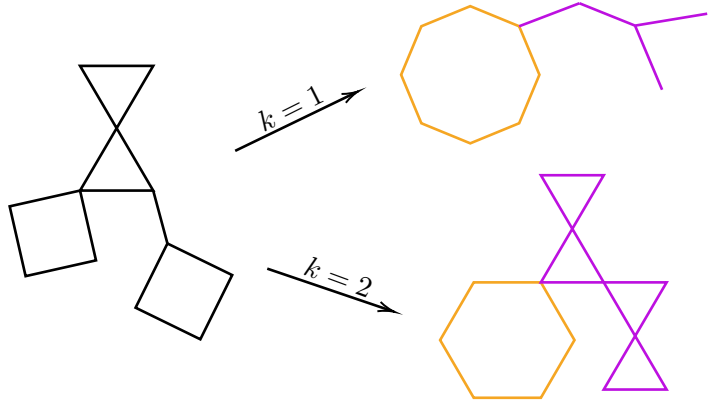

Note that when the dimension of $\fqd$ is much larger than the dimension of simplices in a simplex tree $\TT$, it is always optimal to pick $k$ as the dimension of the largest simplex in $\TT$. However, when the dimension of $\fqd$ is comparable to the dimension of the simplices, the choice of $k$ is more nuanced.  For example, Theorem \ref{maintheoremd} gives the best results for a chain of three triangles in $\mathbb{F}_q^4$ for $k=1$, and gives the best results for a 6-simplex attached to a 3-simplex in $\mathbb{F}_q^6$ for $k=2$. 
 If the dimension of the simplices in $\TT$ is bounded, it becomes optimal to pick a larger $k$ as the number of simplices in $\TT$ increases. \bigskip

In the case where the simplices in our simplex tree are sufficiently small, we get a result using purely inductive techniques.

\begin{theorem}\label{extendsmall2023}
    Consider a simplex tree $\TT$ consisting only of simplices of dimension at most $n < \frac{d+1}{2}$. Fix $E \subset \fqd$ and put $s = n + \frac{d-1}{2}$. If $|E| \gtrsim q^s$ then $E$ contains a positive proportion of congruence classes of $\TT$ in $\fqd$.
\end{theorem}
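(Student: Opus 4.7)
The plan is to proceed by induction on the number of simplices $|\TT|$, following the inductive strategy of \cite{longpaths}, \cite{cycles}, and \cite{small2023}, and taking advantage of the fact that in a simplex tree one can always peel off a leaf. The base case $|\TT| = 1$ is a single simplex of some dimension $m \le n$. The underlying complete graph $K_{m+1}$ has maximum vertex degree $m$, so the Iosevich--Parshall edge-deletion bound from \cite{edgedeletion} already gives that every congruence class is realized in $E$ once $|E| \gtrsim q^{m + (d-1)/2}$, which is implied by our hypothesis $|E| \gtrsim q^{n + (d-1)/2}$ since $m \le n$.

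For the inductive step, I would pick a \emph{leaf} simplex $L$ of $\TT$, i.e.\ one with $|\mathcal{N}_\TT(L)| = 1$; such a leaf exists because property (3) in the definition of simplex trees forbids closed loops, so the auxiliary graph whose vertices are the simplices of $\TT$ and whose edges correspond to shared vertices is a classical tree and hence has a leaf. Let $v^* \in V(L)$ be the unique vertex that $L$ shares with the remainder $\TT' := \TT \setminus \{L\}$. Then $\TT'$ is again a simplex tree with $|\TT|-1$ simplices, each of dimension at most $n$, so the inductive hypothesis yields a positive proportion of congruence classes of $\TT'$ realized in $E$. Fix for each such class a representative embedding $p : V(\TT') \to E$ and set $x := p(v^*)$.

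Because $L$ meets $\TT'$ only at $v^*$ and the edge set of $\TT$ contains no edges between $V(L) \setminus \{v^*\}$ and $V(\TT')$, a congruence class of $\TT$ is uniquely determined by a congruence class of $\TT'$ together with a \emph{pinned} congruence class of $L$ at the distinguished vertex $v^*$. Hence it suffices to establish the following pinned extension statement: for every $x \in E$ and every congruence class $[\sigma]$ of a simplex of dimension $\dim(L)\le n$ with a distinguished vertex, there is an embedding $q : V(L) \to E$ with $q(v^*) = x$ whose congruence class is $[\sigma]$. Given this, combining with the positive proportion of embeddings of $\TT'$ produces a positive proportion of congruence classes of $\TT$ in $E$.

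The pinned extension statement would be proved by placing the remaining vertices of $L$ one at a time: once $v^*$ and a partial subset $\{u_1, \ldots, u_{i-1}\}$ of the other vertices have been located in $E$ matching the prescribed distances, the admissible locations for $u_i$ form the intersection of $i$ spheres in $\fqd$, which is nondegenerate of size $\approx q^{d-i}$ precisely because $i \le \dim(L) \le n < (d+1)/2$. A standard spherical-averaging/Fourier argument in the spirit of \cite{longpaths} and \cite{edgedeletion} then guarantees that $E$ meets this intersection in at least one point whenever $|E| \gtrsim q^{n + (d-1)/2}$, uniformly in the pinned basepoint and target distances. The main obstacle I expect is precisely this uniformity in $x \in E$ and in the target distance tuple: one has to rule out degenerate pinned configurations (e.g.\ sphere intersections of anomalously small size arising from isotropic vectors) contributing more than their share to the count. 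This is exactly where the small-dimension hypothesis $n < (d+1)/2$ is needed, as it keeps every intermediate sphere intersection large enough for the main term in the Fourier expansion to dominate the error uniformly.
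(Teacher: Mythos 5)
Your proposal takes a genuinely different route from the paper's proof: the paper first uses Lemma~\ref{edgeaddition2} to assume every simplex has dimension exactly $n$, then iteratively applies branch shifting (Lemma~\ref{branchshifting}) to reduce $\TT$ to a collection of $k$-weak simplex trees, and finally invokes Theorem~1.12 of \cite{small2023} directly. You instead attempt a fresh induction on $|\TT|$ by peeling off a leaf simplex. The inductive framing is natural, but as written it has a genuine gap.

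The gap is the ``pinned extension statement,'' which asserts that for \emph{every} $x\in E$ and \emph{every} congruence class of $L$ pinned at $v^*$, there is an embedding $q:V(L)\to E$ with $q(v^*)=x$ in that class. This is false at the size threshold $|E|\gtrsim q^{n+(d-1)/2}$, and the small-dimension hypothesis $n<\frac{d+1}{2}$ does not rescue it. The hypothesis $n<\frac{d+1}{2}$ guarantees that the relevant sphere intersections in $\fqd$ are nondegenerate and large; it says nothing about whether $E$ actually meets them for a \emph{fixed} basepoint $x$. Already in the simplest case $\dim(L)=1$ (placing one new vertex at a prescribed nonzero distance $t$ from $x$), nothing at this density prevents $E$ from avoiding the sphere of radius $t$ about a particular $x_0\in E$: take $E=\{x_0\}\cup E'$ where $E'$ is any set of the required size supported off that sphere. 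The Fourier argument in \cite{longpaths}, \cite{edgedeletion}, and \cite{small2023} yields only a count averaged over the pin, i.e.\ bounds on $\sum_x f_{\delta'}(x)\,g_\sigma(x)$ or on its variance, not a statement holding uniformly in $x$. You flag this as the ``main obstacle,'' but the proposal does not actually overcome it; the claimed uniformity is exactly what is unavailable.

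Because of this, the step ``combining with the positive proportion of embeddings of $\TT'$ produces a positive proportion of congruence classes of $\TT$'' does not follow: the representative $p$ you chose for a realized class of $\TT'$ may pin $v^*$ at a bad $x$, and realizing a positive proportion of $\TT'$-classes and a positive proportion of $L$-classes separately does not imply their product is realized, since the supports of $f_{\delta'}(\cdot)$ and $g_\sigma(\cdot)$ need not overlap. To make a leaf-peeling induction work one must strengthen the inductive hypothesis to a quantitative count (e.g., $\nu_{\TT}(\delta)\approx |E|^{|V(\TT)|}q^{-c(\TT)}$ for all nondegenerate $\delta$, or a second-moment bound $\stsum\lesssim |E|^{2|V(\TT)|}q^{-c(\TT)}$) and control the error via Fourier analysis in the spirit of Section~3 and \cite{small2023}; that is essentially what the paper's argument does by reducing to $k$-weak simplex trees where this machinery is already in place.
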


This result follows by using branch shifting to convert our simplex tree $\TT$ to the form of a tree of $n$-simplices in the sense of \cite{small2023}. From here, using Theorem 1.12 of \cite{small2023} gives the desired result. \bigskip  

When working with simplex trees of only $n$-simplices for $n < \frac{d+1}{2}$, Theorem \ref{extendsmall2023} generalizes the results of \cite{small2023}. We achieve the same exponent $s = n + \frac{d-1}{2}$ in Theorem 1.12 of \cite{small2023} but for a wider class of simplex trees than they consider. In many cases where a simplex tree contains simplices of dimension close to $\frac{d+1}{2}$, Theorem \ref{maintheoremd} improves on the results of \cite{small2023}. For instance, for a chain of three triangles in $\mathbb{F}_q^4$, choosing $k=1$ in Theorem \ref{maintheoremd} gives $s = \frac{17}{5}$, while Theorem 1.9 of \cite{small2023} gives $s= \frac{7}{2}$. It is worth noting that for $n < \frac{d+1}{2}$, \cite{small2023} and Theorem \ref{extendsmall2023} give values of $s$ independent of the size of the tree, but Theorem \ref{maintheoremd} weakens as the size of the tree grows. This is because upon Hadamard three-lines reduction, the results of group action techniques are sensitive to the size of the large simplex, while the results of inductive techniques are independent of the size of the simplex tree. As $n$ grows, Theorem \ref{maintheoremd} becomes more and more reliant on group action techniques, even when $k$ is chosen to be maximal. \bigskip

For all simplex trees, we also improve the results of Iosevich and Parshall in \cite{edgedeletion}. When $\TT$ has a simplex of dimension at least $\frac{d+1}{2}$, the methods in \cite{edgedeletion} give trivial results, and when $\TT$ has all simplices of dimension at most $n$ for $n < \frac{d+1}{2}$, applying Theorem \ref{extendsmall2023} improves on the result of \cite{edgedeletion}. Furthermore, our results are independent of the maximum vertex degree of $\TT$, as Theorems \ref{maintheoremd} and \ref{extendsmall2023} only depend on the set of simplices in $\TT$ rather than their configuration. For example, these theorems yield the same bounds for a chain of $n$-simplices of length $\ell$ and a collection of $\ell$ $n$-simplices sharing a common vertex. This gives further indication, as \cite{small2023} discusses, that maximum vertex degree is not always the right notion of complexity.  \bigskip

Due to a stronger bound on the finite field analogue of the Mattila integral in $\mathbb{F}_q^2$, in this setting, we obtain a slightly better result.

\begin{theorem} \label{maintheorem2}
    For a simplex tree $\TT$, define 
    $$N\ =\ 1+\sum_{\substack{S \in \TT, \\ \dim(S) > 1}} (\dim(S) - 1).$$  
    If $E \subset \FF_q^2$ with $q \equiv 3 \pmod 4$ is such that
    $$|E|\ \gtrsim\ q^{\frac{4N}{2N+1}},$$
    then $E$ contains a positive proportion of congruence classes of embeddings of $\TT$ in $\FF_q^2$.
\end{theorem}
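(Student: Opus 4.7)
The plan is to follow the same framework used to prove Theorem \ref{maintheoremd}, specialized to $d=2$ and $k=1$, but to substitute the generic bound $\frac{dN_k+1}{N_k+1}$ for the central simplex with the stronger bound available in $\mathbb{F}_q^2$ when $q \equiv 3 \pmod 4$. Setting $d = 2$ and $k = 1$ in Theorem \ref{maintheoremd} gives the only admissible choice of $k$ (since we need $k < \frac{d+1}{2} = \frac{3}{2}$), and in this regime $N_1$ matches the quantity $N$ in the statement. The inductive piece of the bound becomes $k + \frac{d-1}{2} = \frac{3}{2}$, and since $\frac{4N}{2N+1} \geq \frac{3}{2}$ for $N \geq 2$, one need only worry about replacing the group-action exponent $\frac{2N+1}{N+1}$ by $\frac{4N}{2N+1}$.

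First, I would reduce to counting congruent pairs of embeddings of $\TT$ into $E$, exactly as in the opening of the proof of Theorem \ref{maintheoremd}. I would then apply branch shifting and simplex unbalancing with $k=1$ to convert this sum to one indexed by a ``canonical'' simplex structure, namely a single $N$-simplex with a collection of ordinary trees (trees of $1$-simplices) attached at a distinguished vertex. By assumption these geometric modifications are sum-preserving or sum-dominating, so any upper bound on the canonical sum transfers to a bound on the original. Then I would use the Fourier analytic decomposition introduced in the proof of Theorem \ref{maintheoremd} to split the canonical pair-counting sum into two essentially independent pieces: one whose size is controlled by the number of congruent pairs of embedded $N$-simplices, and one whose size is controlled by the congruent pair counts for the attached trees of edges.

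For the tree-of-edges factor, I would invoke the inductive machinery of \cite{cycles} and \cite{longpaths} in $\mathbb{F}_q^2$, which requires $|E| \gtrsim q^{3/2}$, a hypothesis guaranteed by $|E| \gtrsim q^{4N/(2N+1)}$ whenever $N\geq 2$. For the $N$-simplex factor, I would apply the group action method in the spirit of \cite{groupactions} and \cite{alexmcdonald}, but with the finite-field Mattila integral estimated via the stronger bound available in $\mathbb{F}_q^2$ when $q \equiv 3 \pmod 4$. This is precisely the bound that gave \cite{groupactions} the exponent $\frac{4n}{2n+1}$ for $n$-simplices in the plane, and substituting $n = N$ yields the claimed threshold $\frac{4N}{2N+1}$ for the $N$-simplex contribution. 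Combining the two estimates, and using that the induced bound on the canonical sum controls the number of congruence classes of $\TT$ contained in $E$ via Cauchy--Schwarz, yields a positive proportion of congruence classes of $\TT$ once $|E| \gtrsim q^{4N/(2N+1)}$.

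The main obstacle, and the only place the proof of Theorem \ref{maintheoremd} must genuinely change, is verifying that the improved planar Mattila bound can be cleanly inserted into the group-action step after branch shifting and unbalancing have been performed. Concretely, I need to check that the Fourier-analytic isolation of the $N$-simplex contribution really reduces to exactly the sum to which the $\mathbb{F}_q^2$ Mattila estimate of \cite{groupactions} applies, rather than to some perturbed sum whose bound would be weaker. The condition $q\equiv 3 \pmod 4$ enters here through the structure of the sphere in $\mathbb{F}_q^2$ (where $-1$ is a non-square), which is what the improved estimate exploits; once this planar estimate is substituted in place of the generic $\frac{dN_k+1}{N_k+1}$ bound used in Theorem \ref{maintheoremd}, the rest of the argument carries over verbatim.
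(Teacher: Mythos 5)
Your proposal is correct and follows essentially the same path as the paper. The paper proves Theorem \ref{maintheorem2} as a consequence of Proposition \ref{thebigguy}, whose proof first applies branch shifting and simplex unbalancing to reduce to a single simplex of dimension $N > 1$ with 1-weak simplex trees attached at one vertex (Lemma \ref{simplebigguy1}), then expresses the resulting pair-counting sum as $\sum_{\theta,w}\lambda_\theta^{N}(w)\Gamma_{\theta,T}(w)$ and invokes Lemma \ref{hammerin2d}, which is precisely the inductive $I+II+III$ decomposition fed by the improved planar estimate of Lemma \ref{preprintstuff} (via Proposition \ref{lambdathetafacts2} and Lemma \ref{Gammaminuszerofourier2}) — exactly the Mattila-integral substitution you describe. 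You correctly identify that $k=1$ is the only admissible choice in the plane and that the hypothesis $|E|\gtrsim q^{3/2}$ for the tree factor is implied by $|E|\gtrsim q^{4N/(2N+1)}$ when $N\geq 2$; the paper also tacitly works in this regime (the case $N=1$ where $\TT$ is a tree of edges falls outside Lemma \ref{simplebigguy1} and would instead appeal to the $s=3/2$ bound of Theorem \ref{extendsmall2023}, so both you and the paper are implicitly assuming the presence of a simplex of dimension at least $2$).
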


This result is novel in the sense that \cite{edgedeletion} and \cite{small2023} both give trivial results for simplex trees in $\mathbb{F}_q^2$.  The restriction of $q \equiv 3 \pmod 4$ is to rule out the existence of a square root of $-1$ and the resulting radius zero spheres in $\mathbb{F}_q^2$. These cause technical obstructions in obtaining the improved bound on the finite field analogue of the Mattila integral. \bigskip

It is interesting to note that we obtain the same results as in Theorems \ref{maintheoremd} and \ref{maintheorem2} when we delete a number of edges from some complete graphs in our simplex tree. More precisely, in Section 2, we note that for $k \geq d$, a congruence class of an embedding of a complete graph on $k+1$ vertices (i.e. a $k$-simplex) is defined by $d(k+1) - \binom{d+1}{2}$ pairwise distances rather than all $\binom{d+1}{2}$ distances.  Therefore, given a simplex structure, we may remove up to $2\binom{d+1}{2} - d(k+1)$ edges of any simplex of dimension at least $d$ and obtain the same bounds as in Theorems \ref{maintheoremd} and \ref{maintheorem2}. \bigskip



In Section 2, we discuss some preliminaries from Fourier analysis and the techniques used in \cite{groupactions} and \cite{preprint}.  In Section 3, we generalize the methods used in \cite{preprint} to isolate terms corresponding to high-dimensional simplices and low-dimensional simplex trees in the sum corresponding to a simplified simplex tree $\TT$.  In Section 4, we present the proof of Theorems \ref{maintheoremd} and \ref{maintheorem2}.  Given a simplex tree $\TT$, we recursively build a sum corresponding to $\TT$ and reduce the problem to bounding this sum.  Then, we define and prove branch shifting and simplex unbalancing. Finally, we inductively apply these two geometric operations to reduce to a case where we may apply the methods in Section 3.  \bigskip

In Section 5, we discuss partial progress in applying the Hadamard three-lines framework to cycles of simplices. As in Section 4, we are able to reduce to a class of cycles with only one large simplex. However, in bounding the sum corresponding to this simple class of simplex cycles, technical issues arise in trying to separate the terms corresponding to the large simplex and to the small simplices. 
 
 In Section 6, we briefly discuss future work on structures of simplices glued together at an edge or a face, as well as a probabilistic approach to the Erd\H{o}s-Falconer distance problem that may circumvent the technical obstructions in Section 5.







\section{Preliminaries}

\subsection{Analytic preliminaries}

We first state some of the Fourier analytic machinery we need.

\begin{definition} \label{fourierfacts}
Suppose $f\colon\mathbb{F}_q^d \to \mathbb{C}$ and suppose $\chi$ is a nontrivial additive character on $\mathbb{F}_q$. Then we define the $m$-th Fourier coefficient of $f$ as
\begin{equation}
    \widehat{f}(m)\ =\ q^{-d}\sum_{x\in \mathbb{F}_q^d} \chi(-m \cdot x)f(x).
\end{equation}
We also have the usual inverse finite Fourier transform, defined by
\begin{equation}
    f(x)\ =\ \sum_{m\in \mathbb{F}_q^d} \chi(m \cdot x)\widehat{f}(x).
\end{equation}
\end{definition}

\begin{lemma}[Parseval] \label{parsevals}
For $f$ as above, 
\begin{equation}
    \sum_{m\in \fqd}|\widehat{f}(m)|^2\ =\ q^{-d}\sum_{x\in \fqd} |f(x)|^2.
\end{equation}
\end{lemma}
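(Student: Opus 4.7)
The plan is to prove Parseval's identity by the standard route: expand the left-hand side using the definition of the Fourier coefficient, swap the order of summation, and collapse the resulting inner sum via orthogonality of the characters $\chi(m \cdot \cdot)$ on $\mathbb{F}_q^d$.

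First I would write $|\widehat{f}(m)|^2 = \widehat{f}(m)\overline{\widehat{f}(m)}$ and substitute the formula from Definition 2.1. Because $\chi$ takes values on the unit circle, $\overline{\chi(-m \cdot y)} = \chi(m \cdot y)$, so the expression becomes
\begin{equation*}
\sum_{m \in \mathbb{F}_q^d} |\widehat{f}(m)|^2 \ =\ q^{-2d} \sum_{m \in \mathbb{F}_q^d} \sum_{x, y \in \mathbb{F}_q^d} f(x) \overline{f(y)}\, \chi\bigl(m \cdot (y - x)\bigr).
\end{equation*}
Swapping the order of summation to do the $m$-sum first, the inner sum is $\sum_{m \in \mathbb{F}_q^d} \chi(m \cdot (y-x))$.

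The key ingredient is then the standard orthogonality relation: since $\chi$ is a nontrivial additive character on $\mathbb{F}_q$, for any $z \in \mathbb{F}_q^d$ we have $\sum_{m \in \mathbb{F}_q^d} \chi(m \cdot z) = q^d$ when $z = 0$ and $0$ otherwise. This follows coordinate-by-coordinate from the one-dimensional fact that $\sum_{m \in \mathbb{F}_q} \chi(m t) = 0$ whenever $t \neq 0$ (picking any $a$ with $\chi(a t) \neq 1$ and using translation invariance of the sum to force it to vanish).

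Applying this with $z = y - x$ kills every term except those with $y = x$, contributing a factor of $q^d$. What remains is
\begin{equation*}
q^{-2d} \cdot q^d \sum_{x \in \mathbb{F}_q^d} f(x) \overline{f(x)}\ =\ q^{-d} \sum_{x \in \mathbb{F}_q^d} |f(x)|^2,
\end{equation*}
which is the claimed identity. There is no real obstacle here; the only thing to be careful about is tracking the $q^{-d}$ normalization, which in this paper is placed on $\widehat{f}$ rather than on the inversion formula, hence the $q^{-d}$ (rather than $q^{-2d}$ or $1$) on the right-hand side of the Parseval statement.
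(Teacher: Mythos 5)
Your argument is correct and is the standard orthogonality-of-characters proof of Parseval; the paper does not supply its own proof of this lemma (it simply states it and points to Stein--Shakarchi for background), so there is nothing to compare against. Your accounting of the $q^{-d}$ normalization, which in this paper sits on $\widehat{f}$ rather than on the inversion formula, is exactly the right thing to be careful about, and you handled it correctly.
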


For further reading on the finite Fourier transform see, for example, Chapter 7 of \cite{steinshakarchi}. \bigskip

We now review some tools we use in bounding sums later on in this paper.

\begin{lemma}[Log-convexity of $L^p$-norms] \label{rieszthorin}

For $p_0, p_1 \in \mathbb{R}^+$, define 
\begin{align}
    p_{r}\ =\ \left(\frac{1-r}{p_0} + \frac{r}{p_1}\right)^{-1}.
\end{align}
Then, for $f \in L^{p_0} \cap L^{p_1}$, we have
\begin{align}
    ||f||_{p_r}\ \leq\ ||f||_{p_0}^{1-r} ||f||_{p_1}^r. 
\end{align}
\end{lemma}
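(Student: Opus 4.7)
The plan is to deduce this directly from H\"older's inequality with carefully chosen dual exponents; no interpolation machinery such as Riesz--Thorin is actually needed for the pointwise statement of log-convexity. The key idea is to factor the integrand $|f|^{p_r}$ as a product $|f|^a \cdot |f|^b$ with $a+b = p_r$, and then apply H\"older to separate the two factors into $L^{p_0}$ and $L^{p_1}$ norms.

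First I would compute the exponents. Setting $a = (1-r)p_r$ and $b = r\,p_r$ gives $a+b = p_r$, and choosing H\"older conjugates $\alpha,\beta$ with $1/\alpha = (1-r)p_r/p_0$ and $1/\beta = r p_r/p_1$, the definition $p_r = \bigl((1-r)/p_0 + r/p_1\bigr)^{-1}$ yields $1/\alpha + 1/\beta = 1$. Moreover $a\alpha = p_0$ and $b\beta = p_1$, so the H\"older-produced factors are exactly the right $L^{p_j}$ norms.

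Next I would apply H\"older to obtain
\begin{equation}
\int |f|^{p_r}\, d\mu \;=\; \int |f|^{(1-r)p_r} \cdot |f|^{r p_r}\, d\mu \;\leq\; \left(\int |f|^{p_0}\, d\mu\right)^{(1-r)p_r/p_0} \left(\int |f|^{p_1}\, d\mu\right)^{r p_r/p_1},
\end{equation}
and then take the $p_r$-th root of both sides, which gives exactly $\|f\|_{p_r} \leq \|f\|_{p_0}^{1-r}\|f\|_{p_1}^r$. One should also check the degenerate cases $r = 0$ and $r = 1$ (trivial), and handle the case where one of the right-hand norms vanishes (then $f = 0$ almost everywhere, and both sides are zero).

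I do not anticipate a genuine obstacle here; the only care needed is bookkeeping of the exponents to ensure $1/\alpha + 1/\beta = 1$ and $a\alpha = p_0$, $b\beta = p_1$ simultaneously, which is forced by the definition of $p_r$. Since the paper works over $\mathbb{F}_q^d$, the integrals above are really finite sums and no measurability or integrability concerns arise.
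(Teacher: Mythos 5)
Your proof is correct. The exponent bookkeeping checks out: with $a=(1-r)p_r$, $b=rp_r$, $1/\alpha=(1-r)p_r/p_0$, $1/\beta=rp_r/p_1$, the definition of $p_r$ gives $1/\alpha+1/\beta=1$, and $a\alpha=p_0$, $b\beta=p_1$, so H\"older applied to $|f|^a\cdot|f|^b$ produces exactly the claimed bound after taking $p_r$-th roots. Note, however, that the paper does not actually supply a proof of this lemma --- it is stated as a standard fact (log-convexity of $L^p$-norms is classical) and used directly; there is nothing to compare your argument against. Your H\"older-based argument is the standard one and is a reasonable self-contained justification. One small caveat worth flagging: the lemma as stated allows $p_0,p_1$ to be any positive reals (not just $\geq 1$), in which case the ``$L^p$-norm'' is only a quasi-norm; H\"older's inequality in the form you invoke still works here because you are applying it to the nonnegative functions $|f|^a$ and $|f|^b$ with conjugate exponents $\alpha,\beta>1$, so the argument goes through unchanged, but it is worth noting that the usual normed-space framing of H\"older is not being used.
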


\begin{theorem}[Hadamard three-lines theorem]\label{hadamardthreelines} 

Suppose $f(z) = f(x+iy)$ is bounded, continuous on the strip $\{z : \text{Re}(z) \in [a,b]\}$, and holomorphic in the interior of the strip. Put 
\begin{align}
    M(x)\ =\  \sup_{y \in \mathbb{R}} |f(x+iy)|.
\end{align} Then $\log(M(x))$ is convex on $[a,b]$.  Therefore, for any $t \in [0,1]$, we have
\begin{align}
    M(ta+(1-t)b)\ \leq\ M(a)^tM(b)^{1-t}.
\end{align}
\end{theorem}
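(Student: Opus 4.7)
The plan is to follow the classical maximum-modulus argument. First, I would reduce to a normalized situation. Define the auxiliary function
\begin{equation}
g(z)\ =\ \frac{f(z)}{M(a)^{(b-z)/(b-a)}\, M(b)^{(z-a)/(b-a)}},
\end{equation}
choosing fixed branches of the exponentials (which is harmless since the denominator is a nonvanishing entire function of $z$). A direct computation shows that on the line $\mathrm{Re}(z)=a$ the denominator has modulus $M(a)$, and on $\mathrm{Re}(z)=b$ it has modulus $M(b)$, so $|g(z)| \leq 1$ on both boundary lines of the strip. Since the statement we want, $M(ta+(1-t)b)\leq M(a)^{t}M(b)^{1-t}$, is equivalent to $|g| \leq 1$ throughout the strip, it suffices to prove a maximum principle for $g$.

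If the strip were a compact region I could simply invoke the usual maximum modulus principle, but the strip is unbounded in the imaginary direction, so I have to kill the growth in $y$ by hand. To do this I would introduce, for each $\varepsilon>0$, the damped function
\begin{equation}
g_{\varepsilon}(z)\ =\ g(z)\, e^{\varepsilon(z^{2}-c^{2})},
\end{equation}
where $c$ is a real constant (say $c=(a+b)/2$) chosen just to normalize. Writing $z=x+iy$ one has $|e^{\varepsilon z^{2}}| = e^{\varepsilon(x^{2}-y^{2})}$, so on the strip this factor decays like $e^{-\varepsilon y^{2}}$ as $|y|\to\infty$ while remaining bounded by a constant depending on $a,b,\varepsilon$ in the horizontal direction. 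Because $f$ (hence $g$) is assumed bounded on the strip, $g_{\varepsilon}(z)\to 0$ as $|y|\to\infty$ uniformly in $x\in[a,b]$.

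Now I can apply the maximum modulus principle to $g_{\varepsilon}$ on a rectangle $[a,b]\times[-Y,Y]$ with $Y$ large enough that $|g_{\varepsilon}|<1$ on the horizontal sides; on the vertical sides $|g_{\varepsilon}(a+iy)|\leq e^{\varepsilon(a^{2}-y^{2}-c^{2})}\leq 1$ and similarly at $\mathrm{Re}(z)=b$, using $|g|\leq 1$ there. Thus $|g_{\varepsilon}(z)|\leq 1$ everywhere on the rectangle, and letting $Y\to\infty$ gives $|g_{\varepsilon}|\leq 1$ on the whole strip. Sending $\varepsilon\to 0^{+}$ yields $|g(z)|\leq 1$ throughout the strip, which is exactly the desired inequality.

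The only delicate step is the damping: one must check that $|e^{\varepsilon z^{2}}|\leq 1$ (or at least uniformly bounded) on the boundary lines while still forcing decay as $|y|\to\infty$, and that the endpoint factor $e^{-\varepsilon c^{2}}$ disappears cleanly in the limit $\varepsilon\to 0$. I expect this to be the main (and only) obstacle; everything else is routine complex analysis. The hypothesis that $f$ is bounded on the strip is precisely what makes the $\varepsilon\to 0$ limit legitimate.
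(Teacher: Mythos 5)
The paper does not prove this theorem: it is stated as a classical preliminary (alongside Parseval and log-convexity of $L^p$ norms) and invoked later via Corollary~\ref{repeatedhadamard}. So there is no paper proof to compare against. Your argument is the standard textbook proof by normalization plus a damping factor, and the overall strategy is sound and complete in outline, but let me flag two small inaccuracies.

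First, the boundary estimate is slightly overclaimed. You assert $|g_{\varepsilon}(a+iy)|\leq e^{\varepsilon(a^{2}-y^{2}-c^{2})}\leq 1$, but with $c=(a+b)/2$ the second inequality fails whenever $|a|>|c|$ (for instance $a=-2$, $b=1$, $y=0$ gives $e^{\varepsilon(4-1/4)}>1$); the same can happen at the other wall. The damping factor $e^{\varepsilon(z^{2}-c^{2})}$ is bounded on the vertical sides only by a constant $C_{\varepsilon}=\max\bigl(e^{\varepsilon(a^{2}-c^{2})},\,e^{\varepsilon(b^{2}-c^{2})},\,1\bigr)$, not by $1$. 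This is harmless --- the maximum principle then gives $|g_{\varepsilon}|\leq C_{\varepsilon}$ on the whole strip, and since $C_{\varepsilon}\to 1$ as $\varepsilon\to 0^{+}$, you still conclude $|g|\leq 1$ --- but as written the claim $\leq 1$ is not correct in general, and you should track the explicit constant or switch to a factor like $e^{\varepsilon(z-a)(z-b)}$ whose real part is $\leq 0$ on the boundary. Second, the normalization $g=f/\bigl(M(a)^{(b-z)/(b-a)}M(b)^{(z-a)/(b-a)}\bigr)$ tacitly assumes $M(a),M(b)>0$; you should dispose of the degenerate case $M(a)=0$ or $M(b)=0$ first (then $f$ vanishes on a boundary line, hence identically by continuity and the interior maximum principle, and the inequality is trivial), or replace $M(a),M(b)$ by $M(a)+\delta,M(b)+\delta$ and let $\delta\to 0^{+}$. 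You also prove only the two-point inequality; to get convexity of $\log M$ as stated, one applies the same inequality to every subinterval $[a',b']\subset[a,b]$, which is worth a sentence. With these small repairs the proof is correct.
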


Throughout this paper, we often apply the Hadamard three-lines Theorem in the setting below, so we state it  here for convenience.

\begin{corollary}\label{repeatedhadamard}
    Consider a function $\psi: \mathbb{R}^2 \to \mathbb{R}$ that can be extended to a function from $\mathbb{C}^2$ to $\mathbb{C}$. 
    Suppose that the complex function $\Psi(z) = \psi(a_1 + z, a_2 - z)$ satisfies the criteria of Lemma \ref{hadamardthreelines} in the strip $\{z \colon \text{Re}(z) \in [-a_1,a_2]\}$, and for each $x \in [-a_1, a_2]$, $|\Psi(x + iy)|$ is maximized at $y=0$. Then, we have that
    \begin{equation}
        \psi(a_1, a_2)\ \leq\ \psi
        (a_1 + a_2, 0)^{s_1} \psi(0, a_1 + a_2)^{s_2},
    \end{equation}
    for some $s_1, s_2 \in \mathbb{R}_{> 0}$ that sum to $1$. \bigskip
\end{corollary}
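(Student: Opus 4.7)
The plan is to invoke the Hadamard three-lines theorem (Lemma \ref{hadamardthreelines}) directly on the function $\Psi(z) = \psi(a_1+z, a_2-z)$ on the strip $\{z : \mathrm{Re}(z) \in [-a_1, a_2]\}$, and then use the additional maximum hypothesis to convert the sup-over-vertical-lines bound into a bound on the real axis value $\psi(a_1, a_2)$. The hypothesis that $\Psi$ satisfies the criteria of Lemma \ref{hadamardthreelines} is exactly what is needed to apply the theorem.

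First, I would identify the three relevant points on the real axis:
\begin{equation*}
\Psi(-a_1)\ =\ \psi(0, a_1+a_2), \qquad \Psi(0)\ =\ \psi(a_1, a_2), \qquad \Psi(a_2)\ =\ \psi(a_1+a_2, 0).
\end{equation*}
Writing $0$ as a convex combination of the endpoints of the strip gives
\begin{equation*}
0\ =\ \frac{a_2}{a_1+a_2} \cdot (-a_1)\ +\ \frac{a_1}{a_1+a_2} \cdot a_2,
\end{equation*}
so setting $t = a_2/(a_1+a_2)$, the Hadamard three-lines theorem yields
\begin{equation*}
M(0)\ \leq\ M(-a_1)^{t}\, M(a_2)^{1-t},
\end{equation*}
where $M(x) = \sup_{y\in\mathbb{R}} |\Psi(x+iy)|$.

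Next, I would use the hypothesis that $|\Psi(x+iy)|$ is maximized at $y=0$ for each $x \in [-a_1, a_2]$, which gives $M(x) = |\Psi(x)|$ at the three points of interest. Combining this with the display above, and dropping absolute values since $\psi$ takes nonnegative real values in the applications of interest, yields
\begin{equation*}
\psi(a_1, a_2)\ \leq\ \psi(0, a_1+a_2)^{s_2}\, \psi(a_1+a_2, 0)^{s_1},
\end{equation*}
with $s_1 = a_1/(a_1+a_2)$ and $s_2 = a_2/(a_1+a_2)$, which visibly sum to $1$ and are strictly positive when $a_1, a_2 > 0$. This is exactly the desired inequality.

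The only real obstacle is the bookkeeping step of translating the sup-over-vertical-lines bound from Lemma \ref{hadamardthreelines} into a pointwise bound on the real axis, and this is precisely what the maximum hypothesis is designed to handle; the rest reduces to the convex-combination identity above. In practice, when the corollary is invoked later in the paper, the verification that $\Psi$ is bounded, continuous on the closed strip, holomorphic in the interior, and maximized in $|y|$ at $y=0$ will be the substantive content to check, not the corollary itself.
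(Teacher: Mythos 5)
The paper states Corollary \ref{repeatedhadamard} without proof, treating it as an immediate application of Theorem \ref{hadamardthreelines}; your argument supplies exactly the expected verification, correctly identifying the convex-combination weight $t = a_2/(a_1+a_2)$ and using the $y=0$ maximization hypothesis to replace each $M(\cdot)$ with the corresponding real-axis value. The only point worth flagging is that the final step of dropping absolute values tacitly uses that $\psi$ is real and nonnegative at the three points in question, which you correctly note holds in the applications (and is in any case forced by the corollary's conclusion making sense with fractional exponents).
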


Finally, we have a finite field variant of Taylor's Theorem that is a special case of Lemma 2.1 of \cite{groupactions}.

\begin{lemma} \label{aimlemma39}
    For any function $\phi: \mathbb{F}_q^d \to \mathbb{R}^{\geq 0}$ and any $n \geq 2$, we have that
    \begin{equation} \sum_{x \in \mathbb{F}_q^d}\phi^n(x)\ \lesssim\ q^{-d(n-1)} ||\phi||_{1}^n + ||\phi||_{\infty}^{n-2} \sum_{x \in \mathbb{F}_q^d}\left( \phi(x) - \frac{||\phi||_{1}}{q^d} \right)^2. \end{equation}
\end{lemma}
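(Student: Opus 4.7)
The plan is to expand $\phi$ around its mean. Let $\mu := \|\phi\|_1 / q^d$ denote the average value of $\phi$ on $\fqd$, and write $\phi(x) = \mu + g(x)$ where $g(x) := \phi(x) - \mu$. I would lean on three structural facts: (i) $\sum_{x \in \fqd} g(x) = 0$; (ii) $0 \leq \mu \leq \|\phi\|_\infty$, which uses that $\phi \geq 0$; and (iii) the pointwise bound $|g(x)| \leq \|\phi\|_\infty$, which follows from (ii) since $g(x)$ lies in $[-\mu, \|\phi\|_\infty - \mu]$.

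First I would apply the binomial theorem to write
$$\phi(x)^n \;=\; \mu^n \;+\; n\mu^{n-1} g(x) \;+\; \sum_{k=2}^n \binom{n}{k} \mu^{n-k} g(x)^k,$$
and then sum over $x \in \fqd$. The $k=0$ term contributes exactly $q^d \mu^n = q^{-d(n-1)} \|\phi\|_1^n$, which is already the first piece of the claimed bound. The $k=1$ term vanishes by fact (i), so no contribution appears there.

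For each remaining $k \geq 2$, I would use facts (ii) and (iii) together with the elementary inequality
$$\left|\sum_{x \in \fqd} g(x)^k\right| \;\leq\; \|g\|_\infty^{k-2} \sum_{x \in \fqd} g(x)^2 \;\leq\; \|\phi\|_\infty^{k-2} \sum_{x \in \fqd} (\phi(x) - \mu)^2,$$
and combine with $\mu^{n-k} \leq \|\phi\|_\infty^{n-k}$ to bound the $k$-th term of the binomial expansion by $\binom{n}{k} \|\phi\|_\infty^{n-2} \sum_{x \in \fqd} (\phi(x) - \mu)^2$. Summing over $k = 2, \ldots, n$ produces a constant depending only on $n$, which is absorbed into $\lesssim$, and gives the second piece of the claimed bound.

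The argument is essentially a standard variance-style expansion, so I do not anticipate any genuine obstacle. The one point worth flagging is that the clean pointwise bound $|g(x)| \leq \|\phi\|_\infty$ genuinely requires the nonnegativity hypothesis on $\phi$; without it, $\|g\|_\infty$ could be as large as $\|\phi\|_\infty + \mu$, still yielding the same bound up to constants but by a slightly less transparent estimate.
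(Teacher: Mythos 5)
Your proof is correct. The paper does not prove this lemma itself but cites Lemma 2.1 of \cite{groupactions}, describing it as a ``finite field variant of Taylor's Theorem''; your binomial expansion of $\phi = \mu + g$ around the mean, with the $k=1$ term vanishing and the $k\ge 2$ tail controlled via $|g|\le \|\phi\|_\infty$ and $\mu \le \|\phi\|_\infty$, is exactly the Taylor-style argument that framing points to.
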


\subsection{Geometric preliminaries}

We define the geometric counting function $\lambda_{\theta}$ and highlight some important properties, as in \cite{groupactions}.

\begin{definition} \label{lambdathetadef}
    Given a subset $E \subset \mathbb{F}_q^d$ and a $\theta \in \oq{d}$, the group of orthogonal transformations over $\mathbb{F}_q^d$, define the function $\lambda_{\theta}: \mathbb{F}_q^d \to \mathbb{R}$ by 
    \begin{equation}
        \lambda_\theta(w)\ =\ \big|\{(u, u') \in E^2: u - \theta u' = w\}\big|.\label{lambda theta defn}
    \end{equation}
    We also have $||\lambda_{\theta}(w)||_{1} = |E|^2$, $||\lambda_{\theta}(w)||_{\infty} = |E|$, and $\widehat{\lambda}_{\theta}(0) = \eoq{2}{d}$.
\end{definition}

For all $d \geq 2$, we have the following bounds on sums over $\lambda_{\theta}$.

\begin{prop}[Lemmas 4 and 5 of \cite{alexmcdonald}] \label{lambdathetafactsd}
For $\lambda_\theta$ as defined in $\mathbb{F}_q^d$, we have that for $|E| \gtrsim q^{\frac{dn - d +1}{n}}$, 
\begin{equation}
    \sum_{\theta \in \oq{d}}\sum_{w \in \fqd}  \lambda^n_{\theta}(w)\ \lesssim\ |E|^{2n} q^{-dn + {\binom{d+1}{2}}}.
\end{equation}
We also have, independent of the size of $E$, that 
\begin{equation}
    \sum_{\theta \in \oq{d}}\sum_{w \in \fqd} \left(\lambda_{\theta}(w) - \widehat{\lambda}_{\theta}(0)\right)^2\ \lesssim\ q^{\binom{d}{2}+1}|E|^2.
\end{equation}
\end{prop}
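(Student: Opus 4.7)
\emph{Approach.} I would handle the two inequalities in reverse order: the variance-style bound on $\sum_\theta \sum_w (\lambda_\theta - \widehat{\lambda}_\theta(0))^2$ is purely Fourier analytic and requires no hypothesis on $|E|$, and it feeds directly into the $n$-th moment bound through Lemma \ref{aimlemma39}.

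\emph{Variance bound.} By Parseval (Lemma \ref{parsevals}) in the $w$ variable, together with $\widehat{\lambda}_\theta(0) = |E|^2/q^d$,
\begin{equation*}
    \sum_w (\lambda_\theta(w) - \widehat{\lambda}_\theta(0))^2\ =\ q^d \sum_{m \neq 0} |\widehat{\lambda}_\theta(m)|^2.
\end{equation*}
A direct computation from Definition \ref{lambdathetadef} yields $\widehat{\lambda}_\theta(m) = q^d \widehat{E}(m)\widehat{E}(-\theta^{-1} m)$, using $\theta^T = \theta^{-1}$ for $\theta \in O_d(\mathbb{F}_q)$. Hence
\begin{equation*}
    \sum_\theta \sum_w (\lambda_\theta - \widehat{\lambda}_\theta(0))^2\ =\ q^{3d} \sum_{m \neq 0} |\widehat{E}(m)|^2 \sum_\theta |\widehat{E}(\theta^{-1} m)|^2.
\end{equation*}
For each fixed $m \neq 0$, the orbit $\{\theta^{-1} m : \theta \in O_d(\mathbb{F}_q)\}$ is the sphere $\{k : ||k|| = ||m||\}$, with each value attained with multiplicity $|\Stab(m)| \approx |O_{d-1}(\mathbb{F}_q)| \approx q^{\binom{d-1}{2}}$ by orbit--stabilizer. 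Writing $a(t) = \sum_{||m|| = t} |\widehat{E}(m)|^2$, the double sum collapses to $q^{3d}\cdot q^{\binom{d-1}{2}} \sum_t a(t)^2$. The elementary estimate $\sum_t a(t)^2 \leq (\max_t a(t))(\sum_t a(t)) \leq q^{-2d} |E|^2$ follows from Parseval on $E$ applied to both factors, and since $\binom{d-1}{2} + d = \binom{d}{2} + 1$, the bound $q^{\binom{d}{2}+1}|E|^2$ drops out.

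\emph{$n$-th moment bound.} Next I would apply Lemma \ref{aimlemma39} to $\phi = \lambda_\theta$, noting $||\lambda_\theta||_1 = |E|^2$ and $||\lambda_\theta||_\infty \leq |E|$. Summing over $\theta$ and using $|O_d(\mathbb{F}_q)| \lesssim q^{\binom{d}{2}}$ together with the variance bound just proved gives
\begin{equation*}
    \sum_\theta \sum_w \lambda_\theta^n(w)\ \lesssim\ q^{\binom{d}{2} - d(n-1)} |E|^{2n}\ +\ q^{\binom{d}{2} + 1} |E|^n.
\end{equation*}
The first term equals $|E|^{2n} q^{-dn + \binom{d+1}{2}}$, and the second is dominated by the first precisely when $|E|^n \geq q^{dn - d + 1}$, which is the hypothesis $|E| \gtrsim q^{(dn-d+1)/n}$.

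\emph{Main obstacle.} The delicate step is the orbit--stabilizer bookkeeping for nonzero isotropic $m$ (those with $m \neq 0$ but $||m|| = 0$): the ``sphere of radius zero'' is not a generic orbit, and the stabilizer there may differ from $O_{d-1}(\mathbb{F}_q)$. I would handle this by splitting the $m$-sum into isotropic and nonisotropic parts and verifying that the isotropic sphere still has size on the order of $q^{d-1}$, so that its contribution is governed by a comparable orbit estimate. The careful orbit analysis for these degenerate cases is carried out in \cite{groupactions, alexmcdonald}, which I would invoke directly to close the argument.
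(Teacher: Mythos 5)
Your proof is correct and takes essentially the same route the paper itself employs when it proves the analogous bound in Lemma \ref{Gammaminuszerofourierd} (for this proposition the paper simply cites \cite{alexmcdonald}): Parseval, the factorization $\widehat{\lambda}_\theta(m) = q^d\widehat{E}(m)\widehat{E}(-\theta^{-1}m)$, Witt's theorem plus orbit--stabilizer to collapse the $\theta$-sum, and finally the finite-field Taylor expansion of Lemma \ref{aimlemma39} to pass from the variance bound to the $n$-th moment, with the threshold $|E|\gtrsim q^{(dn-d+1)/n}$ falling out of comparing the two resulting terms. On the obstacle you flag: the stabilizer of a nonzero isotropic vector in $O_d(\mathbb{F}_q)$ is indeed not isomorphic to $O_{d-1}(\mathbb{F}_q)$, but its order is still $\approx q^{\binom{d-1}{2}}$ --- equivalently, the punctured null cone has $\approx q^{d-1}$ points whenever it is nonempty --- so your orbit estimate holds uniformly over isotropic and nonisotropic $m$ and the split you propose, while prudent to state, is not actually needed to close the argument.
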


In $\mathbb{F}_q^2$, we have the following result.

\begin{lemma} \label{preprintstuff}
For any function $h:\mathbb{F}_q^2 \to \mathbb{R}_{\geq 0}$, for $q \equiv 3\pmod 4$

\begin{equation}
    ||\widehat{hS_t}||_{4}\ \lesssim\ q^{-\frac{3}{2}}||hS_t||_{2}.
\end{equation}
\end{lemma}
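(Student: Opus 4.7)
The plan is to transfer the $L^4$ bound on the Fourier side to an $L^2$ bound on the physical side via a convolution identity, and then reduce the problem to a circle--incidence count in $\mathbb{F}_q^2$. Set $f := hS_t$; because $f$ is real and nonnegative, $\overline{\widehat{f}(m)} = \widehat{f}(-m) = \widehat{f^-}(m)$, where $f^-(y) := f(-y)$. Combining this with the convolution identity $\widehat{f \ast g}(m) = q^2\,\widehat{f}(m)\widehat{g}(m)$ valid in $\mathbb{F}_q^2$ gives $|\widehat{f}(m)|^2 = q^{-2}\,\widehat{f \ast f^-}(m)$, and Parseval (Lemma \ref{parsevals}) then yields
\begin{equation}
    \|\widehat{f}\|_4^4 \;=\; \sum_m |\widehat{f}(m)|^4 \;=\; q^{-4}\sum_m |\widehat{f \ast f^-}(m)|^2 \;=\; q^{-6}\,\|f \ast f^-\|_2^2.
\end{equation}
Thus the lemma reduces to the convolution bound $\|f \ast f^-\|_2^2 \lesssim \|f\|_2^4$.

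I would then expand
\begin{equation}
    (f \ast f^-)(x) \;=\; \sum_{\substack{y,\,z \in S_t \\ y - z = x}} h(y)\,h(z),
\end{equation}
and split the sum $\sum_x (f \ast f^-)(x)^2$ at $x = 0$ and $x \neq 0$. The $x=0$ contribution is exactly $\bigl(\sum_y h(y)^2 S_t(y)\bigr)^2 = \|f\|_2^4$, which is already of the desired size. For $x \neq 0$, the constraints $\|y\| = \|z\| = t$ and $y - z = x$ force $2z \cdot x = -\|x\|^2$, so $z$ is pinned to a fixed affine line in $\mathbb{F}_q^2$ intersected with the circle $S_t$.

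The key step, and the step where the hypothesis $q \equiv 3 \pmod 4$ enters, is the incidence bound: such a line meets $S_t$ in at most two points. Parametrizing the line as $z_0 + sv$ and substituting into $\|z\|^2 = t^2$ produces a univariate quadratic in $s$ whose leading coefficient is $\|v\|$; since $-1$ is a nonsquare in $\mathbb{F}_q$ when $q \equiv 3 \pmod 4$, no nonzero vector in $\mathbb{F}_q^2$ is isotropic, so $\|v\| \neq 0$ and the quadratic has at most two roots. Hence $|\{(y,z) \in S_t^2 : y - z = x\}| \leq 2$ for every $x \neq 0$. An application of Cauchy--Schwarz using this uniform count gives
\begin{equation}
    \sum_{x \neq 0} (f \ast f^-)(x)^2 \;\leq\; 2 \sum_{\substack{y,\,z \in S_t \\ y \neq z}} h(y)^2\,h(z)^2 \;\leq\; 2\,\|f\|_2^4,
\end{equation}
and combining with the $x = 0$ contribution yields $\|f \ast f^-\|_2^2 \leq 3\,\|f\|_2^4$, completing the proof.

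The main obstacle I anticipate is the geometric incidence count: absent the hypothesis $q \equiv 3 \pmod 4$, isotropic directions for the form $x_1^2 + x_2^2$ would allow an entire null line to lie inside $S_t$, contributing $\approx q$ rather than $\leq 2$ in the count and destroying the target decay of $q^{-3/2}$. Once the non-isotropy is secured, the remaining steps are routine Fourier bookkeeping followed by Cauchy--Schwarz, and the resulting bound is essentially sharp, as one can check by taking $h$ to be the indicator of $S_t$ itself.
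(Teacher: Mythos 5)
The paper does not give its own proof of this lemma; it refers the reader to Theorem 4.4 of the Chapman--Erd\H{o}gan--Hart--Iosevich--Koh paper and to Lemma 2.4 of Aksoy--Iosevich--McDonald. Your proof is the canonical $L^4$ extension estimate for the circle: transfer to the convolution via $|\widehat{f}(m)|^2 = q^{-2}\widehat{f\ast f^-}(m)$ and Parseval, isolate the diagonal $x=0$, and control the off-diagonal via the incidence bound that an affine line meets $S_t$ in at most two points because the form $x_1^2+x_2^2$ is anisotropic when $q\equiv 3\pmod 4$. This is correct, and it is essentially the same argument as in the cited sources. All the bookkeeping checks out with the paper's normalization: $\widehat{f\ast g}(m)=q^2\widehat{f}(m)\widehat{g}(m)$, Parseval gives $\|\widehat{f}\|_4^4 = q^{-6}\|f\ast f^-\|_2^2$, and the target $\|f\ast f^-\|_2^2\lesssim \|f\|_2^4$ yields $\|\widehat{f}\|_4\lesssim q^{-3/2}\|f\|_2$ exactly.

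One notational slip worth flagging: in the paper's convention $\|x\| = x_1^2+x_2^2$ is already the squared Euclidean quantity, so the constraint is $\|z\|=t$ (not $\|z\|^2=t^2$) and the pinning equation is $2z\cdot x = -\|x\|$ (not $-\|x\|^2$). This doesn't change the structure of the argument --- the quadratic in the line parameter still has leading coefficient $\|v\|$, which is nonzero by anisotropy --- but you should keep the notation consistent with the paper when writing it out.
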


See Theorem 4.4 of \cite{pinneddistsets} for the original proof, or the proof of Lemma 2.4 in \cite{preprint} which more closely matches our notation.  Lemma \ref{preprintstuff} allows us to get better bound on sums of $\lambda_{\theta}$ as presented below.

\begin{prop} \label{lambdathetafacts2}
For $q \equiv 3 \pmod 4$, $\lambda_\theta$ defined as in \eqref{lambda theta defn}, and $|E| \gtrsim q^{\frac{4n-4}{2n-1}}$, we have
\begin{align}
    \sum_{\theta \in \oq{2}}\sum_{w \in \mathbb{F}_q^2}\lambda^n_\theta(w)\ \lesssim \ \eoq{2n}{2n-3}.
\end{align}
Furthermore, independent of the size of $E$, we have
\begin{equation}
    \sum_{\theta \in \oq{2}}\sum_{w \in \mathbb{F}_q^2}(\lambda_\theta-\widehat{\lambda}_\theta(0))^2 \ \lesssim \ |E|^{\frac{5}{2}}q.
\end{equation}
\end{prop}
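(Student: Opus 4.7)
\emph{The plan.} I would prove the mean-square (second) inequality first, then deduce the moment (first) inequality by invoking Lemma \ref{aimlemma39}. A direct Fourier expansion of $\lambda_\theta$ yields $\widehat{\lambda}_\theta(m) = q^2\widehat{E}(m)\widehat{E}(-\theta^\top m)$, so Parseval gives
\begin{equation*}
\sum_{w \in \mathbb{F}_q^2}\bigl(\lambda_\theta(w) - \widehat{\lambda}_\theta(0)\bigr)^2 \;=\; q^2\sum_{m \neq 0}|\widehat{\lambda}_\theta(m)|^2 \;=\; q^6\sum_{m \neq 0}|\widehat{E}(m)|^2|\widehat{E}(\theta^\top m)|^2.
\end{equation*}
The restriction $q \equiv 3 \pmod 4$ forces the only isotropic vector to be the origin, so for $m \neq 0$ we have $t = \|m\| \neq 0$ and $m$ lies on a circle $S_t$ of size $q+1$ on which $\oq{2}$ acts transitively with stabilizer of size $2$ (since $|\oq{2}| = 2(q+1)$). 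Summing over $\theta$ therefore collapses the orbit average to $\sum_\theta|\widehat{E}(\theta^\top m)|^2 = 2\sum_{n \in S_t}|\widehat{E}(n)|^2$, so the whole double sum reduces to
\begin{equation*}
\sum_\theta \sum_w \bigl(\lambda_\theta - \widehat{\lambda}_\theta(0)\bigr)^2 \;\lesssim\; q^6\sum_{t \neq 0}\Bigl(\sum_{m \in S_t}|\widehat{E}(m)|^2\Bigr)^2.
\end{equation*}

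The key step is the extension-type estimate $\sum_{m \in S_t}|\widehat{E}(m)|^2 \lesssim q^{-3}|E|^{3/2}$, which I would obtain as the $\ell^{4/3} \to \ell^2(S_t)$ dual of Lemma \ref{preprintstuff}: pairing $\widehat{hS_t}$ against an arbitrary sequence $g$ via Plancherel converts $\langle g, \widehat{hS_t}\rangle$ into $\sum_{x \in S_t}h(x)\widehat{g}(-x)$, and then H\"older in the exponents $(4/3, 4)$ together with the $L^4$ bound on $\widehat{hS_t}$ yields, after choosing $h$ to extract the $\ell^2(S_t)$-norm of $\widehat{g}|_{S_t}$, the restriction estimate $\|\widehat{g}|_{S_t}\|_{\ell^2(S_t)} \lesssim q^{-3/2}\|g\|_{4/3}$. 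Specializing to $g = E$ and using $\|E\|_{4/3} = |E|^{3/4}$ gives the desired pointwise bound. Combining this with the Plancherel identity $\sum_{m \neq 0}|\widehat{E}(m)|^2 \leq q^{-2}|E|$ via a trivial $\ell^\infty\cdot\ell^1$ H\"older produces
\begin{equation*}
\sum_{t \neq 0}\Bigl(\sum_{m \in S_t}|\widehat{E}(m)|^2\Bigr)^2 \;\leq\; \max_t \sum_{m \in S_t}|\widehat{E}(m)|^2 \cdot \sum_{m \neq 0}|\widehat{E}(m)|^2 \;\lesssim\; q^{-3}|E|^{3/2}\cdot q^{-2}|E| \;=\; q^{-5}|E|^{5/2},
\end{equation*}
and multiplying by $q^6$ delivers the mean-square bound $\sum_{\theta,w}(\lambda_\theta - \widehat{\lambda}_\theta(0))^2 \lesssim q|E|^{5/2}$.

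For the first inequality, apply Lemma \ref{aimlemma39} with $\phi = \lambda_\theta$ in dimension $d = 2$. Using $\|\lambda_\theta\|_1 = |E|^2$ and $\|\lambda_\theta\|_\infty = |E|$ from Definition \ref{lambdathetadef}, summing over $\theta \in \oq{2}$ (which has size $\asymp q$), and substituting the mean-square bound already obtained yields
\begin{equation*}
\sum_{\theta}\sum_{w}\lambda_\theta^n(w) \;\lesssim\; \frac{|E|^{2n}}{q^{2n-3}} + q|E|^{n + 1/2}.
\end{equation*}
The first term dominates precisely when $|E|^{n-1/2} \gtrsim q^{2n-2}$, i.e., $|E| \gtrsim q^{(4n-4)/(2n-1)}$, which is exactly the hypothesis. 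The hard part of the whole argument is the extension estimate: a na\"ive Cauchy--Schwarz using only $\|\widehat{S_t}\|_2 \asymp q^{-1/2}$ yields the weaker pointwise bound $q^{-5/2}|E|^{3/2}$ and loses a factor of $q^{1/2}$ in the final answer, so the $L^4$ restriction strengthening of Lemma \ref{preprintstuff}, together with the absence of nonzero isotropic vectors when $q \equiv 3 \pmod 4$, are both essential to the planar improvement over Proposition \ref{lambdathetafactsd}.
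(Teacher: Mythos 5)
Your proposal is correct and follows essentially the same approach as the paper: invoke Lemma~\ref{aimlemma39} to reduce the $n$-th moment bound to the mean-square bound, then observe the hypothesis $|E| \gtrsim q^{(4n-4)/(2n-1)}$ is precisely the threshold at which the main term $|E|^{2n}/q^{2n-3}$ dominates. The only difference is cosmetic: the paper simply cites \cite{groupactions} and Lemma~2.4 of \cite{preprint} for the mean-square inequality, while you prove it directly via a Parseval/orbit-averaging argument plus the dual $L^{4/3}\to \ell^2(S_t)$ restriction estimate obtained from Lemma~\ref{preprintstuff} — which is in fact the same mechanism the paper itself deploys in the proof of Lemma~\ref{Gammaminuszerofourier2} (with $f_T$ in place of $E$ there), so you have correctly reverse-engineered the intended argument.
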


\begin{proof}
    The second part of the above Proposition is proven in \cite{groupactions}, but more explicitly proven in Lemma 2.4 of \cite{preprint}. To prove the first part of the above Proposition, apply Lemma \ref{aimlemma39} to the sum over $w$, and apply Lemma 2.4 of \cite{preprint}. \bigskip
\end{proof}

We now review some definitions and notation about congruence classes.

\begin{definition}\label{congruenceclasstalk}
    For a graph $G$ with $e(G)$ edges, the \textbf{congruence class} of $G$ corresponding to the values $t_1, \ldots, t_{e(G)} \in \mathbb{F}_q$ is the set of embeddings $\mathcal{E}$ in $G$ such that for each embedding $h\in \mathcal{E}$ and for all $i$, the $i$th edge $(v_{i, 1}, v_{i, 2})$ of $G$ satisfies $||h(v_{i, 1}) - h(v_{i, 2})|| = t_i$.  This forms an equivalence relation on the set of embeddings of $G$.
\end{definition}

When $G$ is a simplex, we can rephrase this definition of congruence classes in terms of group actions. For a $k$-simplex $S = (x_0, \ldots, x_k) \in (\mathbb{F}_q^d)^{k+1}$, the \textbf{rigid-motion congruence class} of $G$ is the set of $k$-simplices $S = (y_0, \ldots, y_k) \in (\mathbb{F}_q^d)^{k+1}$ such that there exists a $\theta \in \oq{d}$ and $w \in \fqd$ such that $x_i = \theta y_i + w$ for all $i$. We now show that these two definitions coincide. \bigskip

 
 A $k$-simplex (more precisely, an embedding of a $k$-simplex) $(x_0, \ldots, x_k) \in (\mathbb{F}_q^d)^{k+1}$ is called \textbf{nondegenerate} if the vectors $x_1 - x_0, \ldots, x_n - x_0$ are linearly independent or they span $\mathbb{F}_q^d$. 
 It is well known that for $k \leq d$, two nondegenerate $k$-simplices $(x_0, \ldots, x_k)$ and $(y_0, \ldots, y_k)$ in $\mathbb{F}_q^d$ are congruent in the sense of Definition \ref{congruenceclasstalk} if and only if there exists a $\theta \in \oq{d}$ and $w \in \mathbb{F}_q^d$ for which $x_i = \theta y_i + w$ for all $i$ (see Lemma 2.11 of \cite{pinneddistsets}). In the case that $k \geq d$, we can form this nondegenerate $k$-simplex as the union of nondegenerate $d$-simplices sharing a common $d-1$-simplex, i.e,
\begin{equation}
    (x_0, \ldots, x_k) \ = \ \bigcup_{i \geq d} (x_0, \ldots, x_{d-1}, x_i)
\end{equation}
Then for two congruent $k$-simplices $(x_0, \ldots, x_k)$ and $(y_0, \ldots, y_k)$ in the sense of Definition \ref{congruenceclasstalk}, for $d \leq i \leq k$ there exists $\theta_i \in \oq{d}, w_i \in \mathbb{F}_q^d$ sending $(x_0, \ldots, x_{d-1}, x_i)$ to $(y_0, \ldots, y_{d-1}, y_i)$.  These $\theta_i$ must be chosen from the two $\theta$ sending $(x_0, \ldots, x_{d-1})$ to $(y_0, \dots, y_{d-1})$ (there are two since this $\theta$ is unique up to reflection).  Therefore, for $k \geq d$, every congruence class of $k$-simplices contains most $2^{d-k+1}$ rigid-motion congruence classes of $k$-simplices defined by rotations and translations, and every rigid-motion congruence class lies in a single congruence class.  We see that rigid-motion congruence classes of simplices are a finer equivalence relation than standard congruence classes, but only by a constant factor.  As our results only care about $E$ containing a positive proportion of congruence classes of embeddings of some graph, we may use these definitions interchangeably for congruence classes of nondegenerate simplices.

\begin{remark}
    In Theorem 7.5 of \cite{groupactions} for $k \leq d$ and Theorem 1 of \cite{alexmcdonald} for $k \geq d$, it is proved that the number of rigid-motion congruence classes of degenerate $k$-simplices is negligible compared to the number of rigid-motion congruence classes of all $k$-simplices.  Therefore, for the rest of the paper, we may assume that all embeddings of simplices are nondegenerate.
\end{remark}

We recall the following fact about embeddings of simplices in \cite{alexmcdonald}.

\begin{lemma} \label{numberofconclassnsimp}
    The number of congruence classes of $n$-simplices in $\mathbb{F}_q^d$ is on the order of
    \begin{align} \begin{cases}
        q^{\binom{n + 1}{2}} & n \leq d \\ 
        q^{d(n+1) - \binom{d + 1}{2}} & n \geq d.
    \end{cases} \end{align}
    We note that these two values agree for $n = d-1$ and $n=d$.
\end{lemma}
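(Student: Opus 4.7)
The plan is to prove Lemma \ref{numberofconclassnsimp} by orbit--stabilizer, counting embeddings of ordered $n$-simplices under the rigid-motion group $G = O_d(\mathbb{F}_q) \ltimes \mathbb{F}_q^d$ and dividing by the orbit size. Concretely, the number of rigid-motion congruence classes of nondegenerate $n$-simplices is
\[
\frac{|\{\text{nondegenerate embeddings}\}| \cdot |\mathrm{Stab}_G(S)|}{|G|},
\]
where $S$ is any nondegenerate $n$-simplex. Since the remark just before the lemma lets us ignore degenerate simplices, it suffices to understand each of these three factors up to constants.

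First I would handle the size of $G$. Standard counts for the classical groups give $|O_d(\mathbb{F}_q)| \approx q^{\binom{d}{2}}$, so $|G| = |O_d(\mathbb{F}_q)| \cdot q^d \approx q^{\binom{d}{2}+d} = q^{\binom{d+1}{2}}$. Next, the number of nondegenerate ordered embeddings: picking $x_0$ freely gives $q^d$ choices, and then picking $x_1, \dots, x_n$ so that $x_i - x_0$ satisfies the required independence or spanning condition is a generic condition, so the count is on the order of $q^{d(n+1)}$ in both regimes (with the degenerate locus being lower-order, as the preceding remark asserts).

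The main work is in computing the stabilizer. Translation acts freely on the first coordinate, so the stabilizer in $G$ of a nondegenerate embedding equals the stabilizer in $O_d(\mathbb{F}_q)$ of the ordered tuple of difference vectors $x_1-x_0, \dots, x_n-x_0$. For $n \geq d$ these vectors span $\fqd$, so any orthogonal transformation fixing them all pointwise must be the identity, and $|\mathrm{Stab}_G(S)| = 1$. Plugging in gives
\[
\frac{q^{d(n+1)}}{q^{\binom{d+1}{2}}} = q^{d(n+1)-\binom{d+1}{2}},
\]
as desired. For $n < d$, the stabilizer consists of orthogonal maps fixing the $n$-dimensional subspace spanned by these vectors pointwise, and acting arbitrarily on its orthogonal complement; this gives a copy of $O_{d-n}(\mathbb{F}_q)$, of size $\approx q^{\binom{d-n}{2}}$. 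The count becomes
\[
\frac{q^{d(n+1)} \cdot q^{\binom{d-n}{2}}}{q^{\binom{d+1}{2}}},
\]
and a short algebraic manipulation, using $\binom{d+1}{2} = \binom{d}{2}+d$ and $\binom{d}{2} - \binom{d-n}{2} = \frac{n(2d-n-1)}{2}$, collapses the exponent to $\binom{n+1}{2}$.

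The main obstacle I anticipate is the stabilizer identification in the $n < d$ case over a finite field, because the form $\|\cdot\|$ is not positive definite and the span of the difference vectors can be a degenerate quadratic subspace containing isotropic vectors; then the naive orthogonal complement argument fails. I would handle this either by further restricting "nondegenerate" to also mean that the spanned subspace is non-degenerate with respect to the form, and verifying that the exceptional locus is still of lower order, or by citing the existing stabilizer computation for such configurations in the orthogonal group literature (e.g., the analogous step in \cite{groupactions} and \cite{alexmcdonald}, from which the authors in fact recall this lemma). Either way, the final agreement of the two formulas at $n = d-1$ and $n = d$ follows from $\binom{n+1}{2} = d(n+1) - \binom{d+1}{2}$ at those values, serving as a consistency check.
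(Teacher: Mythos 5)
The paper does not prove this lemma; it simply recalls it from \cite{alexmcdonald} (and for $n \le d$ it traces back to \cite{groupactions}). Your orbit--stabilizer argument is the standard route to it and is essentially the argument those sources use. The arithmetic checks out: for $n \ge d$ the stabilizer is trivial and $q^{d(n+1)}/q^{\binom{d+1}{2}}$ gives the claimed exponent; for $n < d$ one has
\[
d(n+1) + \binom{d-n}{2} - \binom{d+1}{2} \;=\; d(n+1) - d(n+1) + \tfrac{n(n+1)}{2} \;=\; \binom{n+1}{2},
\]
so the formulas agree.

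Two small points worth tightening. First, the formula $\#\{\text{orbits}\} = \#\{\text{embeddings}\}\cdot|\mathrm{Stab}|/|G|$ as you wrote it presupposes that all nondegenerate embeddings have stabilizers of the same size; over $\mathbb{F}_q$ this is not literally true (the pointwise stabilizer of a nondegenerate $n$-dimensional span is $O_{d-n}(\mathbb{F}_q)$ for the quadratic form restricted to the orthogonal complement, whose type can vary), but the two possible types have the same order of magnitude $q^{\binom{d-n}{2}}$, so the conclusion holds up to constants, which is all $\approx$ requires. Second, you correctly flag that the span of the difference vectors may be a degenerate quadratic subspace for $n<d$, in which case the pointwise stabilizer is not simply $O_{d-n}$ (it picks up unipotent radical). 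The fix you propose --- discard those configurations and check they are $O(1/q)$ of the total --- is the right one, and it is exactly the content of the nondegeneracy discussion that the paper performs just before stating this lemma (citing Theorem 7.5 of \cite{groupactions} and Theorem 1 of \cite{alexmcdonald}). So your proof is sound, and it also serves as a consistency check on the paper's Proposition \ref{stabsizelemma}, which records the same stabilizer orders you computed.
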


We establish the following notation.

\begin{definition}\label{defofc}
    For a simplex $S$, let $c_d(S)$ be the integer such that the number of congruence classes of $S$ in $\mathbb{F}_q^d$ is on the order of $q^{c_d(S)}$. We write $c(S)$ when the dimension is clear.
\end{definition}

Finally, we recall a well-known fact about the size of the orthogonal group over $\fqd$.

\begin{lemma} \label{sizeoforthogonalgroup}
    Let $O_d(\mathbb{F}_q)$ denote the orthogonal group over $\fqd$. 
 Then we have that \begin{equation}
        |O_d(\mathbb{F}_q)|\ \approx\ q^{{\binom{d}{2}}}.
    \end{equation}

\end{lemma}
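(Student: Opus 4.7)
The plan is to prove this lemma by induction on $d$, using an orbit-stabilizer argument applied to the natural action of $O_d(\mathbb{F}_q)$ on the sphere $S_1 = \{x \in \fqd : ||x|| = 1\}$. The base case $d = 1$ is immediate since $O_1(\mathbb{F}_q) = \{\pm 1\}$ has size $2 \approx q^0 = q^{\binom{1}{2}}$.

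For the inductive step, I would first invoke Witt's extension theorem: since the standard quadratic form on $\fqd$ is non-degenerate, $O_d(\mathbb{F}_q)$ acts transitively on vectors of any fixed nonzero norm, in particular on $S_1$. Fix any $v \in S_1$. The stabilizer $\Stab(v)$ consists of orthogonal transformations fixing $v$, and any such transformation preserves the orthogonal complement $v^{\perp}$, so restriction gives an isomorphism $\Stab(v) \cong O(v^{\perp})$. Because $||v|| = 1 \neq 0$, the quadratic form restricted to the $(d-1)$-dimensional subspace $v^{\perp}$ is non-degenerate, so $|\Stab(v)| \approx q^{\binom{d-1}{2}}$ by the inductive hypothesis.

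Next, using standard Gauss sum estimates (or direct counting via the character-sum formula for the number of solutions to $x_1^2 + \cdots + x_d^2 = 1$), I would establish that $|S_1| = q^{d-1} + O(q^{(d-1)/2})$ for $d \geq 2$, so in particular $|S_1| \approx q^{d-1}$. Combining this with orbit-stabilizer yields
\begin{equation}
    |O_d(\mathbb{F}_q)|\ =\ |S_1| \cdot |\Stab(v)|\ \approx\ q^{d-1} \cdot q^{\binom{d-1}{2}}\ =\ q^{\binom{d}{2}},
\end{equation}
which closes the induction.

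The main obstacle is handling the subtleties around Witt's theorem in the finite field setting, where $||\cdot||$ is not a genuine norm (there can be nonzero isotropic vectors and the existence of $\sqrt{-1}$ depends on $q \pmod 4$). This is resolved by observing that we only need transitivity on the sphere of norm $1$, where $1 \neq 0$ and the form is non-degenerate, so Witt's theorem applies cleanly. A secondary concern is that the restricted form on $v^{\perp}$ could a priori have a different Witt type than the original, which would change the exact formula for $|O_{d-1}(\mathbb{F}_q)|$, but since we only track order of magnitude $q^{\binom{d-1}{2}}$, the precise type is immaterial, and the induction goes through uniformly across both orthogonal group families.
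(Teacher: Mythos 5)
The paper states this lemma without proof, citing it as a well-known fact (the exact orders of the orthogonal groups over $\mathbb{F}_q$ can be found, e.g., in standard references on classical groups). Your orbit-stabilizer induction is a correct and standard way to establish the order-of-magnitude version actually used here. The one structural point that deserves emphasis is the one you flag at the end: the restriction of the form to $v^{\perp}$ need not be equivalent to the standard sum-of-squares form (and for $d-1$ even it could be of either Witt type), so the inductive hypothesis must be stated for \emph{all} nondegenerate quadratic forms in $d-1$ variables, not just the standard one. Since for fixed dimension the two orthogonal group orders differ only by a bounded factor (concretely, $q^m \pm 1$ in even dimension $2m$), the estimate $|O(v^{\perp})| \approx q^{\binom{d-1}{2}}$ is uniform across types and the induction closes. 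The remaining ingredients — Witt transitivity on vectors of a fixed nonzero norm (valid since $q$ is odd and the form is nondegenerate), the splitting $\mathbb{F}_q^d = \langle v \rangle \oplus v^{\perp}$ for $\|v\| = 1 \neq 0$, and the point count $|S_1| = q^{d-1} + O(q^{(d-1)/2})$ via Gauss/Jacobi sums — are all correctly invoked. This is a sound self-contained argument for a statement the paper leaves as a black box.
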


\subsection{Simplex trees}
We develop notation for simplex trees in order to work with them more easily throughout the paper.  

\begin{definition}
    We define a \textbf{rooted simplex tree $(\TT, S_0)$} to be a connected simplex tree with a designated root simplex $S_0$.
\end{definition}
\begin{definition}
    For a rooted simplex tree $(\TT, S_0)$, we define the \textbf{depth of a simplex $S$} in $\TT$ to be the minimal length $n$ of a path of simplices in $\TT$ from $S_0$ to $S$. Specifically, for $i_0, \ldots, i_m \in I$ such that $S_0 = S_{i_0}$ and $S = S_{i_m}$, we define
    \begin{align}
        n\ \coloneqq\ \min \left\{m \in \mathbb{N} : \exists \: S_{i_1}, \ldots ,S_{i_{m-1}} \in \TT, S_{i_{j+1}} \in \mathcal{N}(S_{i_{j}}) \text{ for all } 0 \leq j \leq m-1 \right\}.
    \end{align}   
\end{definition}

\begin{definition}

    For a rooted simplex tree $(\TT, S_0)$, and a simplex $S \in \TT$, we call a vertex $v$ of $S$ a \textbf{parent vertex of $S$} if it is shared by a simplex of less depth. 
 Note that each non-root simplex has exactly one parent vertex, while the root has no parent vertex.  If $v$ is shared by a simplex of higher depth, it is called a \textbf{child vertex of $S$}.  If $v$ is not shared by any simplex other than $S$, it is called a \textbf{free vertex}.
\end{definition}

A \textbf{free rooted simplex tree $(\TT, S_0, v_0)$} is a rooted simplex tree with designated free vertex $v_0$ of $S_0$. 

\begin{definition}
    For a simplex $S$ in a rooted tree $(\TT, S_0)$, denote $\mathcal{P}_{\TT}(S)$ ($\mathcal{P}$ for ``pruned'') as the free rooted simplex tree consisting of $S$ and all simplices connected to $S$ through a path of simplices in $\TT$ of greater depth than $S$. We define $\mathcal{P}_{\TT}(S)$ to be rooted at $S$ with the designated vertex being the parent vertex of $S$ in $\TT$. We drop the subscript $\TT$ when it is clear from context.
\end{definition}

\begin{definition}
    For each $v \in V(S)$ and simplices $S_{i_1}, ..., S_{i_n} \in \mathcal{N}(S)$ of greater depth sharing the vertex $v$ with $S$, we define the $j$-th \textbf{branch} of $S$ at $v$ to be the free rooted simplex tree $\mathcal{P}(S_{i_j})$ with root $S_{i_j}$ and designated free vertex $v$.  Let $\mathcal{B}_{\TT}(S, v)$ be the set of branches of $S$ at $v$, where the $\TT$ subscript is dropped when it is clear from context. If a simplex $S$ has no branches, we call the simplex a \textbf{leaf}. If the simplex $S$ is dimension greater than $n$ and has branches consisting only of simplices of dimension at most $n$, we call the simplex an \textbf{$n$-leaf}.

\end{definition}

It is clear that any simplex tree with a simplex of dimension greater than $n$ contains an $n$-leaf. \bigskip

Note that choosing a congruence class of each simplex $S \in \TT$ determines a unique congruence class of $\TT$, since it determines the distance between every pair of vertices corresponding to an edge in an embedding of $\TT$.  Furthermore, every congruence class of $\TT$ uniquely determines the congruence class of each $S \in \TT$. This immediately gives us following lemma:
\begin{lemma}\label{numembeddingsofsimplextrees}
    Given a simplex tree $\TT$ we have that the number of congruence classes of $\TT$ in $\mathbb{F}_q^d$ is equal to $q^{\sum_{S \in \TT}c(S)}$.  We denote the quantity $\sum_{S \in \TT}c(S)$ as $c(\TT)$.
\end{lemma}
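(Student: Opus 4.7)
The plan is to prove the lemma by induction on the number of simplices in $\TT$, exploiting the key structural feature of simplex trees that distinct simplices share at most one vertex and therefore no edges. Consequently the edge set of $\TT$, viewed as a graph, partitions as the disjoint union of the edge sets of the individual simplices, and any assignment of edge-length values to $\TT$ decomposes uniquely into independent edge-length assignments on each $S \in \TT$. The base case $|\TT| = 1$ is immediate from Definition~\ref{defofc}: if $\TT = \{S\}$ then the count is on the order of $q^{c(S)} = q^{c(\TT)}$.

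For the inductive step, I would root $\TT$ arbitrarily and pick a leaf simplex $S_L$, which exists since $\TT$ is finite; equivalently, $S_L$ shares a vertex with exactly one other simplex in $\TT$. Set $\TT' = \TT \setminus \{S_L\}$, which is again a simplex tree, and let $v$ denote the unique vertex shared between $S_L$ and $\TT'$. By the structural observation above together with the paragraph preceding the lemma, every congruence class of $\TT$ corresponds to a unique ordered pair consisting of a congruence class of $\TT'$ and a congruence class of $S_L$. It therefore suffices to show that every such pair is realizable by some embedding of $\TT$, so that the count multiplies.

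To realize a given pair, I would start with any embedding $p'$ of $\TT'$ in the prescribed congruence class and any embedding $p_L$ of $S_L$ in the prescribed congruence class, then apply a rigid motion $(\theta, w) \in \oq{d} \times \fqd$ to $p_L$ so that the image of $v$ lands on $p'(v)$. Since the remaining vertices of $S_L$ are free in $\TT$, no new edge constraints arise when gluing, and the combined map is a valid embedding of $\TT$ realizing the desired pair. Combined with the inductive hypothesis, this produces the count $\#\{\text{congruence classes of } \TT\} \approx q^{c(\TT')} \cdot q^{c(S_L)} = q^{c(\TT)}$ as required.

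The main subtlety is that the gluing step manipulates embeddings using elements of $\oq{d} \times \fqd$, so it actually establishes multiplicativity for \emph{rigid-motion} congruence classes rather than for the edge-length congruence classes of Definition~\ref{congruenceclasstalk}. As discussed immediately after that definition, however, the two notions coincide up to a bounded constant factor on nondegenerate simplices, and degenerate simplices contribute negligibly by the remark following Lemma~\ref{numberofconclassnsimp}. This suffices to conclude the order-of-magnitude count $q^{c(\TT)}$ claimed in the lemma.
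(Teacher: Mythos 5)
Your proof is correct, and it rests on the same underlying observation the paper makes in the paragraph preceding the lemma: since distinct simplices in a simplex tree share at most a vertex and hence no edges, the edge set of $\TT$ partitions among its constituent simplices, so a congruence class of $\TT$ is determined by (and determines) the tuple of congruence classes of the individual simplices. Where you go further is in checking realizability. The paper simply asserts that picking a congruence class for each $S \in \TT$ ``determines a unique congruence class of $\TT$,'' implicitly assuming that every such tuple is actually achieved by some embedding; your inductive leaf-removal argument, where you glue an embedding of the leaf $S_L$ onto an embedding of $\TT'$ by a rigid motion sending $p_L(v)$ to $p'(v)$, supplies the missing surjectivity. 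This is a genuine improvement in rigor, and the induction is a natural way to package it, although the same gluing could be performed all at once by fixing a root and processing simplices in order of depth. One remark: the concern in your final paragraph is unfounded. Applying a rigid motion to $p_L$ preserves all edge lengths, hence keeps $p_L$ inside the prescribed edge-length congruence class of $S_L$; you never actually pass from edge-length classes to rigid-motion classes, so the constant-factor comparison between the two notions is not needed for this argument (though it does no harm to note it).
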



\begin{remark}
    By the discussion and remark under \ref{congruenceclasstalk}, we may assume that in any embedding of $\TT$, all embeddings of simplices in $\TT$ are nondegenerate.
 
\end{remark}



\section{Functional sums and useful bounds}

In this section, we prove several bounds on sums and products of geometric counting functions that will become relevant in the upcoming sections.  Some of the bounds in this section rely on the results of \cite{small2023}, who work with a weaker definition of simplex trees than how we have defined them.  We recall their definition here:
\begin{definition}[Definition 1.11 of \cite{small2023}]
    We define an \textbf{$k$-weak simplex tree} $T_k$ as the graph obtained from a standard graph-theoretic tree, where each edge is replaced with a $k$-simplex containing this edge.  We assume $k \leq \frac{d+1}{2}$.
\end{definition}

For a $k$-weak simplex tree $T_k$ with a distinguished root vertex $r$, we enumerate each of the $e(T_k) = \ell \binom{k+1}{2}$ edges in $T_k$, where $\ell$ is the number of simplices in $T_k$.  For $t_1, \ldots, t_{e(T_k)} \in \mathbb{F}_q$ such that $t_i \neq 0$, we identify the congruence class of embeddings of $T_k$ in $\mathbb{F}_q^d$ by assigning the distance $t_i$ to the $i$th edge.  For an $E \subset \mathbb{F}_q^d$, we define the function $f_{T_k, t_1, \ldots, t_{e(T_k)}}(x)$ as the number of embeddings $h: V(T_k) \to E$ of $T_k$ in the congruence class defined by $t_1, \ldots, t_{e(T_k)} \in \mathbb{F}_q$, with the condition that $h(r)=x$.  In our analysis of $f_{T_k, t_1, \ldots, t_{e(T_k)}}(x)$, our results will be independent of the $t_i$ (as long as they are nonzero), so we denote this function simply as $f_{T_k}(x)$. \bigskip

For a $\theta \in \oq{d}$, it is useful to define the function $\Gamma_{\theta, T_k}(w): \mathbb{F}_q^d \to \mathbb{R}$ as

\begin{equation}
    \Gamma_{\theta, T_k}(w)\ =\ \sum_{\substack{x, x' \in \mathbb{F}_q^d \\ x-\theta x' = w}}f_{T_k}(x)f_{T_k}(x').
\end{equation}

This function counts the number of pairs of congruent embeddings $h_1, h_2: V(T_k) \to E$ such that the transformation given by a rotation by $\theta$ and a translation by $w$ sends $h_1(r)$ to $h_2(r)$.  We prove some facts about $f_{T_k}$ and $\Gamma_{\theta, T_k}$ which are of use to us.

\begin{lemma}\label{funnerfacts}
Suppose $|E| \geqsim q^{k+\frac{d-1}{2}}$ and let $T_k$ be a $k$-weak simplex tree with $\ell$ simplices. Then
\begin{enumerate}
    \item $||f_{T_k}||_1\ \approx\ \eoq{\ell k + 1}{\ell \binom{k+1}{2}}$.
    \item $||f_{T_k}||_2\ \approx\ \eoq{\ell k + \frac{1}{2}}{\ell \binom{k+1}{2}}$.
    \item $||\Gamma_{\theta, T_k}||_1\ \approx\ \eoq{2\ell k+2}{2\ell\binom{k+1}{2}}$ and $\widehat{\Gamma}_{\theta, T_k}(0)\ \approx\ \eoq{2\ell k + 2}{d+2\ell  \binom{k+1}{2}}$.
\end{enumerate}
\end{lemma}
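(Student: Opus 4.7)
The plan is to reduce parts (1) and (2) to a uniform pointwise estimate on $f_{T_k}$ over $E$, and to derive part (3) directly from the definition of $\Gamma_{\theta, T_k}$ and from Definition \ref{fourierfacts}. The main step is to show that, under the size hypothesis $|E| \gtrsim q^{k + (d-1)/2}$, we have the pointwise estimate
\begin{equation}
f_{T_k}(x)\ \approx\ \eoq{\ell k}{\ell \binom{k+1}{2}} \quad \text{uniformly for } x \in E,
\end{equation}
independent of the choice of nonzero edge distances $t_1, \ldots, t_{e(T_k)}$ defining the congruence class.

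To establish this pointwise bound, I would induct on the number of simplices $\ell$ in $T_k$. For the base case $\ell = 1$, the estimate counts embeddings of a single $k$-simplex with one pinned vertex; this is obtained via iterated Fourier expansion against sphere indicators $S_{t_{i,j}}$ and Plancherel, using the Fourier decay of $(k-1)$-dimensional spheres (sharp enough in the regime $k < (d+1)/2$) together with the size hypothesis to ensure the main term dominates. For the inductive step, I would prune a leaf simplex $S'$ of $T_k$ to obtain a smaller tree $T_k'$; conditioning on the shared vertex $v$ expresses $f_{T_k}(x)$ as a product of a modified $f_{T_k'}(x)$ factor and a pinned-simplex count at $v$, each of which satisfies the expected pointwise bound by the inductive hypothesis and the base case. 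This argument is essentially the content of the inductive proofs in \cite{small2023} and \cite{cycles}, so I would invoke those results directly. The main obstacle is controlling the cumulative Fourier error across all $\ell$ simplices in the tree; the size hypothesis $|E| \gtrsim q^{k + (d-1)/2}$ is chosen precisely so that each error is dominated by the corresponding main term uniformly as the tree grows.

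Given the pointwise estimate, part (1) follows by summing over the support $E$ of $f_{T_k}$:
\begin{equation}
||f_{T_k}||_1\ =\ \sum_{x \in E} f_{T_k}(x)\ \approx\ |E| \cdot \eoq{\ell k}{\ell \binom{k+1}{2}}\ =\ \eoq{\ell k + 1}{\ell \binom{k+1}{2}},
\end{equation}
and squaring and summing gives part (2):
\begin{equation}
||f_{T_k}||_2^2\ =\ \sum_{x \in E} f_{T_k}(x)^2\ \approx\ |E| \cdot \eoq{2\ell k}{2\ell \binom{k+1}{2}}\ =\ \eoq{2\ell k + 1}{2\ell \binom{k+1}{2}}.
\end{equation}
For part (3), a direct computation from the definition of $\Gamma_{\theta, T_k}$ yields $||\Gamma_{\theta, T_k}||_1 = \sum_{x,x'} f_{T_k}(x) f_{T_k}(x') = ||f_{T_k}||_1^2$, which gives the first claim by part (1). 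Since $\Gamma_{\theta, T_k} \geq 0$, Definition \ref{fourierfacts} produces $\widehat{\Gamma}_{\theta, T_k}(0) = q^{-d} ||\Gamma_{\theta, T_k}||_1$, yielding the second claim.
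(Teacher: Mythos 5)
The central step of your plan---the uniform pointwise estimate $f_{T_k}(x) \approx |E|^{\ell k}q^{-\ell\binom{k+1}{2}}$ for all $x \in E$---is not true and is not what \cite{small2023} or \cite{cycles} establish. Those papers prove \emph{counting} statements, i.e., $L^1$-type bounds such as $\sum_x f_{T_k}(x) \approx |E|^{\ell k + 1}q^{-\ell\binom{k+1}{2}}$; they do not give concentration of $f_{T_k}$ around its mean at the level of individual $x$. Indeed, a pointwise two-sided bound fails in general: even for a single edge ($\ell = k = 1$), the size hypothesis $|E| \gtrsim q^{(d+1)/2}$ does not preclude $E$ from being, say, a sphere, in which case $f_{T}(x)$ can be far from $|E|/q$ for particular $x \in E$ and $t$. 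The existence of outliers is exactly why the $L^2$ norm is not simply $|E|^{1/2}$ times the average value, so your derivation of part (2) from part (1) via a pointwise bound does not work.

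The paper avoids this issue entirely. Part (1) is read off directly from Theorem 1.12 of \cite{small2023} (no pointwise input needed). For part (2), the key observation you are missing is that $f_{T_k}^2(x)$ counts \emph{pairs} of embeddings of $T_k$ rooted at $x$ in the same congruence class, which is the same as counting embeddings (rooted at $x$) of a larger $k$-weak simplex tree $T_k'$ obtained by gluing two copies of $T_k$ at the root. Since $T_k'$ has $2\ell$ simplices, applying Theorem 1.12 of \cite{small2023} to $T_k'$ gives $\sum_x f_{T_k}^2(x) \approx |E|^{2\ell k + 1}q^{-2\ell\binom{k+1}{2}}$, and taking square roots yields part (2). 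This doubling trick is the essential idea; without it part (2) does not follow from an $L^1$ statement alone. Your part (3) argument is correct (it is a direct computation, equivalent to the paper's Fourier calculation), but it depends on part (1), which you should source to the counting theorem rather than to a pointwise bound.
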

\begin{proof}
By definition, $f_{T_k}(x)$ counts the number of embeddings of $T_k$ rooted at $x$ in a fixed congruence class.  Hence, $||f_{T_k}||_1$ is the total number of trees in this congruence class class. From Theorem 1.12 of \cite{small2023}, this is on the order of $|E|^{\ell k + 1}q^{-\ell \binom{k+1}{2}}$ when $|E| \geqsim q^{k+\frac{d-1}{2}}$. \bigskip

To find $||f_{T_k}||_2$, note that $f^2_{T_k}(x)$ counts the number of pairs of embeddings of $T_k$ rooted at $x$ in a fixed congruence class.  Equivalently, $f^2_{T_k}(x)$ counts the number of embeddings of a larger $k$-weak simplex tree $T_k'$ rooted at $x$, where $T_k'$ is obtained by adjoining two copies of $T_k$ at $r$, and denoting $r$ as the root vertex of $T_k'$.  $T_k$ has $2\ell$ many $k$-simplices, so by Theorem 1.12 of \cite{small2023}, we have that for $|E| \geqsim q^{k+\frac{d-1}{2}}$, 
\begin{align}
    \sum_{x \in \mathbb{F}_q^d} f_{T_k}^2(x)\ \approx \ \eoq{2\ell k + 1}{2\ell \binom{k+1}{2}}.
\end{align}
Therefore, taking square roots, we get that
\begin{align}
    ||f_{T_k}||_2\ \approx \ \eoq{\ell k + \frac{1}{2}}{\ell  \binom{k+1}{2}}.
\end{align}

By the definition of the Fourier transform we have that
\begin{align}
    \widehat{\Gamma}_{\theta,T_k}(m)\ &=\ q^{-d}\sum_{w\in\mathbb{F}_q^d}\chi(-m\cdot w)\Gamma_{\theta,T_k}(w) \\
    &=\ q^{-d}\sum_{x,x'\in\mathbb{F}_q^d}\chi(-m\cdot (x-\theta x'))f_{T_k}(x)f_{T_k}(x')\ =\ q^d\widehat{f}_{T_k}(m)\overline{\widehat{f}_{T_k}(\theta^{-1}m)}.
\end{align}
We also have that for $|E|\ \geqsim\ q^{k+\frac{d-1}{2}}$,
\begin{align}\widehat{f}_{T_k}(0)\ =\ q^{-d}||f_{T_k}||_1\ \approx\ q^{-d}\left( \eoq{\ell k + 1}{\ell \binom{k+1}{2}} \right)\ =\ \eoq{\ell k + 1}{d+\ell  \binom{k+1}{2}}.
\end{align}
Therefore, we get the following results.
\begin{align}
    \widehat{\Gamma}_{\theta, T_k}(0)\ =\ q^d|\widehat{f}_{T_k}(0)|^2\ \approx\ \eoq{2\ell k+2}{d+2\ell\binom{k+1}{2}} \\
    ||\Gamma_{\theta, T_k}||_1\ =\ q^d\widehat{\Gamma}_{\theta, T_k}(0)\ \approx\ \eoq{2\ell k+2}{2\ell\binom{k+1}{2}}.
\end{align}

\end{proof}

In the coming sections, we need bounds for sums of the form 
\begin{equation}
    \sum_{\theta \in \oq{d}} \sum_{w \in \mathbb{F}_q^d} \lambda_{\theta}^{n}(w)\Gamma_{\theta, T_k}(w). 
\end{equation}
This bound looks slightly different in $\mathbb{F}_q^2$ than in higher dimensions, due to our ability to bound the finite field analogue of the Mattila integral better in two dimensions.

\begin{lemma}\label{hammerin2d}
    Let $T$ be a 1-weak simplex tree with $\ell$ simplices (i.e. a standard graph-theoretic tree with $\ell$ edges) and let $E \subset \mathbb{F}_q^2$ for $q \equiv 3 \pmod 4$.  If $|E| \geqsim q^{\frac{4n}{2n+1}}$ for some $n \geq 1$, then we have that

    \begin{equation}
        \sum_{\theta, w} \lambda_{\theta}^{n}(w)\Gamma_{\theta, T}(w)\ \lesssim \ \eoq{2\ell+2n+2}{2\ell +2n-1}.
    \end{equation}
\end{lemma}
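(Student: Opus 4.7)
The plan is to apply Corollary \ref{repeatedhadamard} (Hadamard three-lines) to the function $\psi(a_1, a_2) = \sum_{\theta, w} \lambda_\theta^{a_1}(w) \Gamma_{\theta, T}^{a_2}(w)$ at the point $(a_1, a_2) = (n, 1)$, which yields
$$\sum_{\theta, w} \lambda_\theta^n \Gamma_{\theta, T} \;\leq\; \left(\sum_{\theta, w} \lambda_\theta^{n+1}\right)^{\!n/(n+1)} \left(\sum_{\theta, w} \Gamma_{\theta, T}^{n+1}\right)^{\!1/(n+1)}.$$
The hypothesis $|E| \gtrsim q^{4n/(2n+1)} = q^{(4(n+1)-4)/(2(n+1)-1)}$ is exactly the threshold Proposition \ref{lambdathetafacts2} requires for the exponent $n+1$, giving $\sum_{\theta, w} \lambda_\theta^{n+1} \lesssim |E|^{2n+2}/q^{2n-1}$. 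What remains is to show $\sum_{\theta, w} \Gamma_{\theta, T}^{n+1} \lesssim |E|^{(2\ell+2)(n+1)}/q^{2\ell(n+1) + 2n - 1}$, since inserting both bounds into the Hadamard inequality produces exactly the target $|E|^{2\ell + 2n + 2}/q^{2\ell + 2n - 1}$.

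To handle $\sum_{\theta, w} \Gamma_{\theta, T}^{n+1}$, the plan is to apply the finite field Taylor-type inequality (Lemma \ref{aimlemma39}) pointwise in $\theta$ with exponent $n+1 \geq 2$, obtaining
$$\sum_w \Gamma_{\theta, T}^{n+1}(w) \;\lesssim\; q^{-2n}\|\Gamma_{\theta, T}\|_1^{n+1} + \|\Gamma_{\theta, T}\|_\infty^{n-1}\sum_w(\Gamma_{\theta, T} - \widehat{\Gamma}_{\theta, T}(0))^2.$$
Summing over $\theta$ and using $|O_2(\FF_q)| \approx q$ (Lemma \ref{sizeoforthogonalgroup}), $\|\Gamma_{\theta, T}\|_1 \approx |E|^{2\ell+2}/q^{2\ell}$ (Lemma \ref{funnerfacts}), and the Cauchy–Schwarz bound $\|\Gamma_{\theta, T}\|_\infty \leq \|f_T\|_2^2 \approx |E|^{2\ell+1}/q^{2\ell}$, the first AIM term contributes precisely the required size $|E|^{(2\ell+2)(n+1)}/q^{2\ell(n+1) + 2n - 1}$.

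The main obstacle will be controlling the second AIM term, which requires a sharp second-moment bound on $\Gamma_{\theta, T}$ that generalizes the estimate $\sum_{\theta, w}(\lambda_\theta - \widehat{\lambda}_\theta(0))^2 \lesssim q|E|^{5/2}$ of Proposition \ref{lambdathetafacts2} from the indicator $1_E$ to the tree-embedding counting function $f_T$. The approach is to use Plancherel and the identity $\widehat{\Gamma}_{\theta, T}(m) = q^2 \widehat{f_T}(m) \overline{\widehat{f_T}(\theta^{-1}m)}$ to write
$$\sum_\theta \sum_w (\Gamma_{\theta, T} - \widehat{\Gamma}_{\theta, T}(0))^2 \;=\; q^{6} \sum_{m \neq 0} |\widehat{f_T}(m)|^2 \sum_\theta |\widehat{f_T}(\theta^{-1} m)|^2,$$
convert the inner $\theta$-sum into an average over the Fourier sphere $\{m' : \|m'\| = \|m\|\}$ (using $|O_2|/|S_{\|m\|}| \lesssim 1$), and invoke Lemma \ref{preprintstuff} to extract the $q^{-3/2}$ gain from the $L^4$-extension estimate for spheres in $\FF_q^2$ with $q \equiv 3 \pmod 4$, mirroring the argument of Lemma 2.4 in \cite{preprint}. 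A direct computation then shows that, together with the $\|\Gamma_{\theta, T}\|_\infty^{n-1}$ factor, the second AIM term is dominated by the first precisely when $|E| \gtrsim q^{4n/(2n+1)}$, completing the proof.
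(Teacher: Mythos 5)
Your proposal is correct, and it takes a genuinely different route from the paper. The paper proves Lemma \ref{hammerin2d} by induction on $n$: at each step it writes $A_n \leq I + II + III$ with $III = \frac{|E|^2}{q^2}A_{n-1}$, and treats the off-diagonal piece $I$ and the main piece $II$ separately. You instead perform a single interpolation (Hölder with exponents $\frac{n+1}{n}$ and $n+1$, which is what Corollary \ref{repeatedhadamard} degenerates to along the line $a_1 + a_2 = n+1$) to split the mixed sum into the two pure power sums $\sum_{\theta,w}\lambda_\theta^{n+1}$ and $\sum_{\theta,w}\Gamma_{\theta,T}^{n+1}$, then bound each directly. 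The first is Proposition \ref{lambdathetafacts2} at exponent $n+1$, and the hypothesis $|E|\gtrsim q^{4n/(2n+1)}$ is exactly the required threshold; the second follows from Lemma \ref{aimlemma39} together with the second-moment estimate $\sum_{\theta,w}(\Gamma_{\theta,T}-\widehat{\Gamma}_{\theta,T}(0))^2 \lesssim |E|^{4\ell+5/2}q^{1-4\ell}$, which is precisely the paper's Lemma \ref{Gammaminuszerofourier2} — you do not actually need to rederive it as you sketch, you can just cite it. A direct check confirms your claim that the second AIM term is dominated by the first exactly when $|E|\gtrsim q^{4n/(2n+1)}$, and the Hölder exponents recombine to give $|E|^{2\ell+2n+2}q^{1-2\ell-2n}$. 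What your approach buys is the elimination of the induction: one Hölder application plus two one-shot bounds. What it costs (only slightly) is that it is less obviously parallel to the higher-dimensional Lemma \ref{hammerindd}, where the paper reuses the same inductive decomposition. One small caveat, shared equally by the paper's proof: Lemma \ref{Gammaminuszerofourier2} and Lemma \ref{funnerfacts} require $|E|\gtrsim q^{3/2}$, and $4n/(2n+1)\geq 3/2$ only when $n\geq 2$; for $n=1$ the stated hypothesis $|E|\gtrsim q^{4/3}$ does not by itself cover the range $q^{4/3}\lesssim|E|<q^{3/2}$. This is immaterial for the downstream application (Lemma \ref{simplebigguy1} always invokes this with $n=\dim(S)\geq 2$), but it is worth noting that your proof inherits the same gap.
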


\begin{lemma}\label{hammerindd}
    Let $T$ be a $k$-weak simplex tree with $\ell$ simplices, for $1 \leq k < \frac{d+1}{2}$, and let $E \subset \mathbb{F}_q^d$.  If $|E| \gtrsim q^{\max(\frac{dn+1}{n+1}, \, k + \frac{d-1}{2})}$ for some $n \geq 1$, we have that

    \begin{equation}
        \sum_{\theta, w} \lambda_{\theta}^{n}(w)\Gamma_{\theta, T_k}(w)\ \lesssim \ \eoq{2\ell k +2n +2}{2\ell\binom{k+1}{2} + d(n+1) - \binom{d+1}{2}}.
    \end{equation} 
\end{lemma}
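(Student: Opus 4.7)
The plan is to apply a weighted analogue of Lemma \ref{aimlemma39} (with $\Gamma_{\theta,T_k}$ as a nonnegative weight), decomposing $\sum_w \lambda_\theta^n(w)\Gamma_{\theta,T_k}(w)$ into a \emph{main} piece (where $\lambda_\theta$ is essentially at its average $\widehat{\lambda}_\theta(0)=|E|^2/q^d$) and an \emph{error} piece controlled by the $L^2$-deviation of $\lambda_\theta$ from its mean. The proof of Lemma \ref{aimlemma39} adapts to this weighted setting via a pointwise split based on the sign of $\lambda_\theta(w) - \widehat{\lambda}_\theta(0)$, to yield, for $n \geq 2$,
\begin{equation*}
    \sum_w \lambda_\theta^n(w)\Gamma_{\theta,T_k}(w)\ \lesssim\ \widehat{\lambda}_\theta(0)^n \|\Gamma_{\theta,T_k}\|_1\ +\ \|\lambda_\theta\|_\infty^{n-2} \sum_w (\lambda_\theta(w)-\widehat{\lambda}_\theta(0))^2 \Gamma_{\theta,T_k}(w),
\end{equation*}
while the case $n=1$ is handled directly using Cauchy--Schwarz together with the $n=1$ cases of Proposition \ref{lambdathetafactsd} and Lemma \ref{funnerfacts}.

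For the main term, summing over $\theta \in O_d(\mathbb{F}_q)$ and combining $\|\Gamma_{\theta,T_k}\|_1 \approx |E|^{2\ell k+2}/q^{2\ell\binom{k+1}{2}}$ from Lemma \ref{funnerfacts} (valid since $|E| \gtrsim q^{k+(d-1)/2}$), $\widehat{\lambda}_\theta(0) = |E|^2/q^d$, and $|O_d(\mathbb{F}_q)| \approx q^{\binom{d}{2}}$ from Lemma \ref{sizeoforthogonalgroup}, a direct calculation using the identity $\binom{d+1}{2} - \binom{d}{2} = d$ shows that this contributes exactly $|E|^{2\ell k + 2n + 2}/q^{2\ell\binom{k+1}{2} + d(n+1) - \binom{d+1}{2}}$, matching the target bound. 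For the error term, I use $\|\lambda_\theta\|_\infty = |E|$ and bound $\Gamma_{\theta,T_k}(w) \leq \|\Gamma_{\theta,T_k}\|_\infty \leq \|f_{T_k}\|_\infty \|f_{T_k}\|_1$; a pinned variant of Theorem~1.12 of \cite{small2023} gives $\|f_{T_k}\|_\infty \lesssim |E|^{\ell k}/q^{\ell\binom{k+1}{2}}$, so $\|\Gamma_{\theta,T_k}\|_\infty \lesssim |E|^{2\ell k+1}/q^{2\ell\binom{k+1}{2}}$. Bounding the remaining $\sum_\theta \sum_w (\lambda_\theta - \widehat{\lambda}_\theta(0))^2$ by the second (unconditional) estimate $q^{\binom{d}{2}+1}|E|^2$ of Proposition \ref{lambdathetafactsd}, the error is $\lesssim |E|^{2\ell k + n + 1} q^{\binom{d}{2}+1 - 2\ell\binom{k+1}{2}}$; this is dominated by the main term precisely when $|E|^{n+1} \gtrsim q^{dn+1}$, i.e., exactly our hypothesis $|E| \gtrsim q^{(dn+1)/(n+1)}$.

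The main obstacle is establishing the weighted analogue of Lemma \ref{aimlemma39} and the pinned $L^\infty$ estimate on $f_{T_k}$. Both are routine extensions of arguments already in the literature, but the delicate part is the careful $q$-exponent bookkeeping that ensures the error term is precisely dominated by the main term under the stated hypothesis on $|E|$, with the conditions $|E| \gtrsim q^{k+(d-1)/2}$ and $|E| \gtrsim q^{(dn+1)/(n+1)}$ playing complementary roles for the two terms.
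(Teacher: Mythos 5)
Your strategy is a genuinely different route: instead of the paper's telescoping induction on $A_n = \sum_{\theta,w}\lambda_\theta^n\Gamma_{\theta,T_k}$, you apply a weighted analogue of Lemma~\ref{aimlemma39} once to $\sum_w \lambda_\theta^n(w)\Gamma_{\theta,T_k}(w)$. The weighted inequality itself is sound (it follows from the same pointwise split on the sign of $\lambda_\theta(w)-\widehat{\lambda}_\theta(0)$ used to prove Lemma~\ref{aimlemma39}), and your main-term arithmetic, using $\binom{d+1}{2}-\binom{d}{2}=d$, checks out exactly against the target.

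The fatal gap is the pointwise estimate $\|f_{T_k}\|_\infty \lesssim |E|^{\ell k}/q^{\ell\binom{k+1}{2}}$. This is not a routine consequence of Theorem~1.12 of \cite{small2023} or any pinned variant---it is false. Theorem~1.12 and the moments extracted from it in Lemma~\ref{funnerfacts} give $\|f_{T_k}\|_1$ and $\|f_{T_k}\|_2$, i.e.\ first and second moment control, but say nothing about the worst-case value. A concrete counterexample in $d=3$, $k=\ell=1$: let $E$ consist of a nondegenerate sphere $\{y:\|y-p\|=t\}$ with $t\neq 0$ (which has $\sim q^2$ points) together with its center $p$. Then $|E|\sim q^2$ meets the hypothesis $|E|\gtrsim q^{k+(d-1)/2}=q^2$, yet $f_{T_1,t}(p)=|E|-1\sim q^2$, while the claimed bound gives only $|E|/q\sim q$. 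Padding $E$ with further points does not repair this, since the sphere alone forces $\|f_{T_1}\|_\infty\gtrsim q^2$. Since your error-term computation shows the error is dominated by the main term only if the claimed $L^\infty$ bound holds with no slack, a factor-$q$ loss here wrecks the stated threshold.

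This is precisely why the paper never touches $\|f_{T_k}\|_\infty$ or $\|\Gamma_{\theta,T_k}\|_\infty$. It instead proves an $L^2$-deviation bound on $\Gamma_{\theta,T_k}-\widehat{\Gamma}_{\theta,T_k}(0)$ via Parseval and a Mattila-type estimate (Lemma~\ref{Gammaminuszerofourierd}), pairs that against $\lambda_\theta - \widehat{\lambda}_\theta(0)$ with Cauchy--Schwarz inside a telescoping expansion of $A_n$, and uses only the trivially true $\|\lambda_\theta\|_\infty = |E|$ for the sup-norm factor. To salvage your scheme you would need to replace the pointwise domination of $\Gamma_{\theta,T_k}(w)$ in the error term by a further split of $\Gamma_{\theta,T_k}$ about its mean plus Cauchy--Schwarz feeding into Lemma~\ref{Gammaminuszerofourierd}---but at that point you have essentially reconstructed the paper's induction. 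Your $n=1$ base case is also under-justified: Lemma~\ref{funnerfacts} supplies $\|\Gamma_{\theta,T_k}\|_1$, not the bound on $\sum_{\theta,w}\Gamma_{\theta,T_k}^2$ that Cauchy--Schwarz requires, and the paper obtains that from Lemma~\ref{aimlemma39} together with Lemma~\ref{Gammaminuszerofourierd}.
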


The proofs of these two Lemmas are contained in the subsequent subsections.

\subsection{Proof of Lemma \ref{hammerin2d}}

We first prove a useful bound, which is a generalization of Lemma 3.10 of \cite{preprint}. Recall that in the setting of Lemma \ref{hammerin2d} we have $d = 2$, so our 1-weak simplex tree $T_k$ is just a standard graph-theoretic tree $T$.

\begin{lemma}\label{Gammaminuszerofourier2}
    For $q \equiv 3\pmod 4$, $E \subset \mathbb{F}_q^2$ and $|E| \gtrsim q^{3/2}$,
    \begin{equation}
        \sum_{\theta, w} \left( \Gamma_{\theta, T}(w) - \widehat{ \Gamma }_{\theta, T}(0)\right)^2\ \lesssim\ \eoq{4\ell+\frac{5}{2}}{4\ell-1}.
    \end{equation}
\end{lemma}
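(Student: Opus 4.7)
The plan is to follow the template of the second part of Proposition \ref{lambdathetafacts2} (equivalently, Lemma 2.4 of \cite{preprint}), with $f_T$ playing the role previously played by $1_E$. First, Parseval's identity (Lemma \ref{parsevals}) in the variable $w$ gives
\[
    \sum_w \left(\Gamma_{\theta, T}(w) - \widehat{\Gamma}_{\theta, T}(0)\right)^2\ =\ q^2 \sum_{m \neq 0} |\widehat{\Gamma}_{\theta, T}(m)|^2,
\]
and the Fourier identity $\widehat{\Gamma}_{\theta,T}(m) = q^2 \widehat{f_T}(m) \overline{\widehat{f_T}(\theta^{-1}m)}$ derived in the proof of Lemma \ref{funnerfacts} rewrites the full sum as
\[
    \sum_{\theta, w}\left(\Gamma_{\theta, T}(w) - \widehat{\Gamma}_{\theta, T}(0)\right)^2\ =\ q^6 \sum_{\theta} \sum_{m \neq 0} |\widehat{f_T}(m)|^2\, |\widehat{f_T}(\theta^{-1}m)|^2.
\]

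Next, I would leverage the orbit structure of $O_2(\mathbb{F}_q)$. Under the hypothesis $q \equiv 3 \pmod 4$, the only isotropic vector is the origin, so for each $m \neq 0$ the orbit $\{\theta^{-1}m : \theta \in O_2(\mathbb{F}_q)\}$ coincides with the sphere $S_{||m||}$ of size $\approx q$ and has stabilizers of bounded size. Reordering the sum reduces the estimate to bounding $\sum_{t \neq 0} g(t)^2$, where $g(t) := \sum_{m \in S_t} |\widehat{f_T}(m)|^2$. An equivalent physical-space formulation, obtained either via Fourier duality on the autocorrelation of $f_T$ or by directly expanding $\Gamma_{\theta,T}^2$ as a four-tuple count with matching norms, shows that the target sum is comparable, up to lower order contributions, to $\sum_{t \neq 0}H(t)^2$, where $H(t) := \sum_{x, y : ||x-y||=t}f_T(x)f_T(y)$ is the weighted distance distribution associated to $f_T$. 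The subtracted $\widehat{\Gamma}_{\theta,T}(0)^2$ contribution absorbs the ``main-term'' portion of $H(t)$ corresponding to the $m = 0$ Fourier coefficient, leaving the high-frequency error as the dominant piece.

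The main step is then to bound the high-frequency contribution to $\sum_{t \neq 0} H(t)^2$ by the target $\eoq{4\ell + 5/2}{4\ell - 1}$. For this, I would apply Lemma \ref{preprintstuff} with $h = f_T$ (valid since $f_T : \mathbb{F}_q^2 \to \mathbb{R}_{\geq 0}$), which gives $||\widehat{f_T S_t}||_4 \lesssim q^{-3/2}||f_T S_t||_2$. Combining this with the size estimates $||f_T||_1 \approx \eoq{\ell + 1}{\ell}$ and $||f_T||_2^2 \approx \eoq{2\ell+1}{2\ell}$ from Lemma \ref{funnerfacts}, and following the structure of the $\ell = 0$ argument of \cite{preprint}, the target exponent follows; the scaling from $\ell = 0$ to general $\ell$ is dictated precisely by the power of $||f_T||_2$ appearing in the argument. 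The main obstacle will be the careful tracking of lower-order cancellations in the Fourier decomposition of $H(t)$ and in the application of Lemma \ref{preprintstuff}; the hypothesis $|E| \gtrsim q^{3/2}$ is needed both to invoke Lemma \ref{funnerfacts} and to ensure that the main-term cancellations dominate the secondary error contributions.
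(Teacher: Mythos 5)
Your opening is correct and matches the paper: Parseval gives the $q^2\sum_{m\neq 0}|\widehat{\Gamma}_{\theta,T}(m)|^2$ expression, the Fourier identity $\widehat{\Gamma}_{\theta,T}(m)=q^2\widehat{f_T}(m)\overline{\widehat{f_T}(\theta^{-1}m)}$ reduces the target to
\[
q^6\sum_{\theta}\sum_{m\neq 0}|\widehat{f_T}(m)|^2|\widehat{f_T}(\theta^{-1}m)|^2,
\]
and the observation that the $\theta$-orbit of a nonzero $m$ is the Fourier-space sphere $S_{\|m\|}$ (with bounded stabilizer) is exactly the reorganization the paper performs.

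After that the proposal drifts from the actual argument. First, the pivot to the physical-space quantity $H(t)=\sum_{\|x-y\|=t}f_T(x)f_T(y)$ and the claim that $\sum_t g(t)^2$ is ``comparable, up to lower order contributions'' to $\sum_t H(t)^2$ is not established and is not something the paper uses. The two quantities are related through $\widehat{S_t}$, not by an approximate identity; the main-term/error decomposition of $H(t)$ and of $g(t)$ are not aligned in the way you assert, and the subtraction of $\widehat{\Gamma}_{\theta,T}(0)$ does not simply ``absorb'' the $m=0$ piece of $H(t)$. The paper stays entirely in Fourier space and never introduces $H(t)$.

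Second, and more seriously, the proposed application of Lemma \ref{preprintstuff} with $h=f_T$ (so that $hS_t$ is the restriction of $f_T$ to a sphere in \emph{physical} space) is the wrong setting for the lemma. In the paper the lemma is applied with $h(m)=\overline{\widehat{f_T}(m)}$ and the sphere $S_t$ in the \emph{Fourier} variable: after writing $\sum_{\|\ell\|=t}|\widehat{f_T}(\ell)|^2=\sum_\ell \widehat{f_T}(\ell)S_t(\ell)h(\ell)$, Fourier inversion and a sum swap convert this to $\sum_w f_T(w)\widehat{hS_t}(w)$, which H\"older bounds by $\|f_T\|_{4/3}\,\|\widehat{hS_t}\|_4$. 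Lemma \ref{preprintstuff} then gives $\|\widehat{hS_t}\|_4\lesssim q^{-3/2}\|hS_t\|_2$, and since $\|hS_t\|_2^2=\sum_{\|\ell\|=t}|\widehat{f_T}(\ell)|^2$ this closes a self-improving inequality yielding $\sum_\theta|\widehat{f_T}(\theta^{-1}m)|^2\lesssim q^{-3}\|f_T\|_1\|f_T\|_2$ uniformly in $m\neq 0$. Your version with $h=f_T$ never produces this uniform bound, and the quantity $\|\widehat{f_T S_t}\|_4$ does not appear anywhere in the estimate; $H(t)$ involves the \emph{convolution} $f_T*S_t$, not the pointwise product $f_TS_t$, so the lemma applied to $f_TS_t$ does not control $H(t)$. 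You also omit the log-convexity step $\|f_T\|_{4/3}\leq\|f_T\|_1^{1/2}\|f_T\|_2^{1/2}$ (Lemma \ref{rieszthorin}), which is where $\|f_T\|_1$ enters. In short: the right way to ``scale from $\ell=0$ to general $\ell$'' is to keep the restriction estimate on the Fourier side with $h\sim\widehat{f_T}$, then combine with the remaining $\sum_m|\widehat{f_T}(m)|^2=q^{-2}\|f_T\|_2^2$ and plug in Lemma \ref{funnerfacts}; the physical-space detour is both unnecessary and, as written, unjustified.
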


\begin{proof}
We first apply Parseval's theorem.

\begin{equation}
    \sum_{\theta, w} \left( \Gamma_{\theta, T}(w) - \widehat{\Gamma}_{\theta, T}(0) \right)^2\ =\ q^2 \sum_{\theta} \sum_{m \neq 0} \left| \widehat{\Gamma}_{\theta, T}(m) \right|^2.\label{firstlinehere}
\end{equation}
We note that

\begin{align}
    \widehat{\Gamma}_{\theta, T}(m)\ =\ q^{-2}\sum_{w}\chi(-m \cdot w) \Gamma_{\theta, T}(w)\ &=\ q^{-2}\sum_{u, u' \in \mathbb{F}_q^2} \chi(-m \cdot (u - \theta u')) f_T(u) f_T(u') \\ 
    &=\ q^{2}\widehat{f_T}(m)\overline{\widehat{f_T}(\theta^{-1}m)} \label{eqn:Gammaexp}.
\end{align}
Plugging the expression \eqref{eqn:Gammaexp} for $\widehat{\Gamma}_{\theta, T}(m)$ into \eqref{firstlinehere} gives us
\begin{align}
    \sum_{\theta, w} \left( \Gamma_{\theta, T}(w) - \widehat{\Gamma}_{\theta, T}(0) \right)^2\  =\ q^6 \sum_{\theta} \sum_{m \neq 0} |\widehat{f_T}(m)|^2 |\widehat{f_T}(\theta^{-1}m)|^2. \label{mattilad=2}
\end{align}

We sum first in $\theta$. Defining $||m|| = t$ and $h(m) = \overline{\widehat{f}_T(m)}$, we get that

\begin{align}
    \sum_{\theta} |\widehat{f_T}(\theta^{-1}m)|^2\ \approx\ \sum_{||\ell|| = t} | \widehat{f_T}(\ell)|^2\ &=\ \sum_{\ell} \widehat{f_T}(\ell) S_t(\ell) h(\ell) \\ &=\ q^{-2}\sum_{\ell}\sum_{w} \chi(-\ell \cdot w) f_T(w) S_{t}(\ell) h(\ell) \\
    &=\ \sum_{\ell}f_T(\ell) \widehat{hS_t}(\ell) 
   \ =\ ||f_T||_{{4/3}} ||\widehat{hS_t}||_{4},
\end{align}
where the last equality follows from an application of H{\"o}lder's inequality. We recall that by Lemma \ref{preprintstuff}, $||\widehat{hS_t}||_{4} \lesssim q^{-3/2}||hS_t||_{2}$ and we use the log-convexity of $L^p$ norms to bound $||f_T||_{{4/3}}$.

\begin{equation}
    ||f_T||_{{4/3}}\ \leq\ (||f_T||_{1})^{1/2} (||f_T||_{2})^{1/2}.
\end{equation}
These give

\begin{equation}2||hS_t||_{2}^2\ =\ \sum_{\theta} |\widehat{f_T}(\theta^{-1}m)|^2\ \lesssim\ q^{-3/2}||hS_t||_{2}(||f_T||_{1})^{1/2} (||f_T||_{2})^{1/2}, 
\end{equation}
from which we get that
\begin{align}
    \sum_{\theta} |\widehat{f_T}(\theta^{-1}m)|^2 \ \lesssim\ q^{-3}||f_T||_{1}||f_T||_{2}.
\end{align}
We calculate the remaining sum over $m$ in \eqref{mattilad=2}.
\begin{equation}
    \sum_{m \in \mathbb{F}_q^2} |\widehat{f_T}(m)|^2\ =\ q^{-2}  \sum_{x \in \mathbb{F}_q^2} f(x)^2\ =\ q^{-2} ||f_T||_{2}^2.
\end{equation}
Putting this all together, we conclude that
\begin{equation}
    \sum_{\theta, w} \left( \Gamma_{\theta, T}(w) - \widehat{\Gamma}_{\theta,T}(0) \right)^2 \ \lesssim\ q^6 \cdot q^{-2} ||f_T||_{2}^2 \cdot q^{-3}||f_T||_{1}||f_T||_{2}\ =\ q ||f_T||_{1} ||f_T||^3_{2}.
\end{equation}
For $|E| \gtrsim q^{3/2}$, we may plug in our bounds for $||f_T||_1$ and $||f_T||_2$ from Lemma \ref{funnerfacts}, completing the proof. \bigskip
\end{proof}

We are now ready to prove Lemma \ref{hammerin2d} by induction on $n$.  Let $A_n = \sum_{\theta, w} \lambda^n_\theta(w) \Gamma_{\theta, T}(w)$.  We first find a bound on $A_1$. By Cauchy-Schwarz, we have that 
\begin{equation}            \sum_{\theta, w}\lambda_\theta(w)\Gamma_{\theta, T}(w)\ \leq\ \left(\sum_{\theta, w}\lambda_{\theta}^2(w) \right)^{1/2} \left(\sum_{\theta, w} \Gamma_{\theta, T}^2(w) \right)^{1/2}.
\end{equation}

By Proposition \ref{lambdathetafacts2}, we have that $\sum_{\theta, w} \lambda_\theta^2(w) \lesssim |E|^4q^{-1}$ whenever $|E| \gtrsim q^{4/3}$. To bound $\sum_{\theta, w} \Gamma_{\theta, T}^2(w)$, we apply Lemma \ref{aimlemma39} and plug in our bound from Lemma \ref{Gammaminuszerofourier2}: for $|E| \gtrsim q^{3/2}$, we obtain

\begin{align}
    \sum_{\theta, w} \Gamma_{\theta, T}^2(w)\ &\lesssim\ \sum_{\theta}\eoq{4\ell+4}{4\ell+2} + \sum_{\theta, w}\left(\Gamma_{\theta, T}(w) - \widehat{\Gamma}_{\theta, T}(0)\right)^2\ \\  &\lesssim\ \eoq{4\ell+4}{4\ell+1} + \eoq{4\ell+\frac{5}{2}}{4\ell-1}. \label{basecase2dominate}
\end{align}
We see that the first term in \eqref{basecase2dominate} dominates the second term at $|E| \geqsim q^{4/3}$.  Putting everything together, we have for $|E| \gtrsim q^{4/3}$,
\begin{equation}
\sum_{\theta, w}\lambda_\theta(w)\Gamma_{\theta, T}(w)\ \lesssim\ \eoq{2\ell + 4}{2\ell+ 1}.
\end{equation}

This completes the base case.  Now assume that $n \geq 2$, and assume the inductive hypothesis that for $|E| \gtrsim q^{4(n-1)/(2(n-1)+1)}$,
\begin{equation}
    A_{n-1}\ \lesssim\ \eoq{2\ell + 2(n-1)+2}{2\ell + 2(n-1)-1}
\end{equation}
Since $\lambda_{\theta}^n(w)\Gamma_{\theta, T}(w)$ is positive by definition and $\widehat{\lambda}_\theta(0) = |E|^2q^{-2}$, we may write
\begin{align}
    \sum_{\theta, w} \lambda_{\theta}^n(w) \Gamma_{\theta, T}(w)\
    & \leq\  \left| \sum_{\theta, w} \lambda_{\theta}^{n-1}(w)\left( \lambda_{\theta}(w) - \frac{|E|^2}{q^2} \right)(\Gamma_{\theta, T}(w) - \widehat{\Gamma}_{\theta, T}(0)) \right|\nonumber \\
    &\hspace{0.5cm}+\ \sum_{\theta, w} \lambda_{\theta}^{n}\widehat{\Gamma}_{\theta, T}(0) + \frac{|E|^2}{q^2}A_{n-1} \\
    &=:\ I + II + III.
\end{align}
We bound $I$ by pulling absolute values inside the sum, taking the supremum bound on $\lambda_{\theta}^{n-1}(w)$, and applying Cauchy-Schwarz to the remaining terms, as follows:  
\begin{align}
    I\ &\leq\  |E|^{n-1}\left(\sum_{\theta, w}\left( \lambda_{\theta}(w) - \widehat{\lambda}_\theta(0)\right)^2\right)^{1/2} \left(\sum_{\theta, w}\left( \Gamma_{\theta, T}(w) - \widehat{\Gamma}_{\theta, T}(0)\right)^2\right)^{1/2} \\  &\lesssim\ \eoq{2\ell+\frac{3}{2}+n}{2\ell-1}.
\end{align}
The bound above follows from plugging in Proposition \ref{lambdathetafacts2} and Lemma \ref{Gammaminuszerofourier2}.  For $|E| \geqsim q^{4n/(2n+1)}$, we have that
\begin{equation}
  \eoq{2\ell+n + \frac{3}{2}}{2\ell-1} \ \lesssim \ \eoq{2\ell+2n+2}{2\ell+2n-1}.
\end{equation}

From Proposition \ref{lambdathetafacts2} and Lemma \ref{funnerfacts}, we get that for $|E| \gtrsim q^{4(n-1)/(2(n-1)+1)}$,
\begin{equation}
    II\ =\ \eoq{2\ell + 2}{2\ell + 2} \sum_{\theta, w}\lambda_\theta^n(w)\ \lesssim\ \eoq{2\ell + 2n+2}{2\ell + 2n-1}.
\end{equation}

Finally, we note that for $|E| \gtrsim q^{4(n-1)/(2(n-1)+1)}$,
\begin{equation}
    III \ = \ \frac{|E|^2}{q^2}A_{n-1} \ \lesssim \ \eoq{2\ell+2n+2}{2\ell+2n-1}.
\end{equation}
Combining these bounds completes the proof.

\subsection{Proof of Lemma \ref{hammerindd}}

Before proceeding to the proof of Lemma \ref{hammerindd}, we prove a $d$-dimensional analogue of the bound in Lemma \ref{Gammaminuszerofourier2}.  This is essentially a generalization of Lemma 4 in \cite{alexmcdonald}. 

\begin{lemma}\label{Gammaminuszerofourierd} For $E \subset \mathbb{F}_q^d$ and $|E| \gtrsim q^{k + (d-1)/2}$
    $$\sum_{\theta, w}(\Gamma_{\theta, T_k}(w) - \widehat{ \Gamma}_{\theta, T_k}(0) )^{2}\ \lesssim \ |E|^{4\ell k+2} q^{d + {\binom{d - 1} {2}}-4\ell\binom{k+1}{2}}. $$
\end{lemma}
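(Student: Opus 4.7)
The plan is to follow the same template as the two-dimensional proof in Lemma \ref{Gammaminuszerofourier2}, but to replace the 2D $L^4$-$L^2$ restriction estimate (Lemma \ref{preprintstuff}) by a softer orthogonal-group argument in the spirit of Lemma 4 of \cite{alexmcdonald}. Specifically, I would first apply Parseval to reduce the sum on the left-hand side to a Fourier-side expression. Since $\widehat{\Gamma}_{\theta,T_k}(m) = q^{d}\widehat{f}_{T_k}(m)\overline{\widehat{f}_{T_k}(\theta^{-1}m)}$, this yields
\begin{equation}
\sum_{\theta,w}\bigl(\Gamma_{\theta,T_k}(w) - \widehat{\Gamma}_{\theta,T_k}(0)\bigr)^2 \;=\; q^{3d}\sum_{m\neq 0}|\widehat{f}_{T_k}(m)|^{2}\sum_{\theta\in O_d(\mathbb{F}_q)}|\widehat{f}_{T_k}(\theta^{-1}m)|^{2}.
\end{equation}

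The second step is to bound the inner $\theta$-sum uniformly in $m$. The map $\theta\mapsto \theta^{-1}m$ sends $O_d(\mathbb{F}_q)$ onto the orbit of $m$, with each fiber of size at most $|O_{d-1}(\mathbb{F}_q)|\approx q^{\binom{d-1}{2}}$ by Lemma \ref{sizeoforthogonalgroup}. Hence
\begin{equation}
\sum_{\theta}|\widehat{f}_{T_k}(\theta^{-1}m)|^{2} \;\lesssim\; q^{\binom{d-1}{2}}\sum_{y\in \mathbb{F}_q^d}|\widehat{f}_{T_k}(y)|^{2} \;=\; q^{\binom{d-1}{2}-d}\|f_{T_k}\|_{2}^{2},
\end{equation}
where the last equality is Parseval. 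Extending the sum from the orbit to all of $\mathbb{F}_q^d$ is a crude step, but it is what allows us to avoid the technical restriction $q\equiv 3\pmod 4$ needed in dimension two; the price is a slightly weaker bound which turns out to still suffice.

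For the outer sum, Parseval again gives $\sum_{m\neq 0}|\widehat{f}_{T_k}(m)|^{2}\le q^{-d}\|f_{T_k}\|_{2}^{2}$. Combining the three factors yields
\begin{equation}
\sum_{\theta,w}\bigl(\Gamma_{\theta,T_k}(w) - \widehat{\Gamma}_{\theta,T_k}(0)\bigr)^2 \;\lesssim\; q^{d+\binom{d-1}{2}}\|f_{T_k}\|_{2}^{4}.
\end{equation}
The final step is to invoke part (2) of Lemma \ref{funnerfacts}, which under the hypothesis $|E|\gtrsim q^{k+(d-1)/2}$ gives $\|f_{T_k}\|_{2}^{4} \approx |E|^{4\ell k+2}q^{-4\ell\binom{k+1}{2}}$. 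Substituting this produces exactly the target bound.

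I do not anticipate any serious obstacle: the only place where one must be careful is the transfer from the $\theta$-sum to a sum over the sphere, because vectors $m$ with $\|m\|=0$ and $m\neq 0$ (``null vectors'') can have orbits of irregular size. The remedy is precisely the one used above — bound the fiber size by $|O_{d-1}(\mathbb{F}_q)|$ and extend the sum over the sphere to all of $\mathbb{F}_q^d$ before applying Parseval — which absorbs any such issues into the uniform $q^{\binom{d-1}{2}-d}\|f_{T_k}\|_{2}^{2}$ bound. This is exactly analogous to how \cite{alexmcdonald} handles the related Fourier quantity for simplices, and the hypothesis $|E|\gtrsim q^{k+(d-1)/2}$ is only used through Lemma \ref{funnerfacts}.
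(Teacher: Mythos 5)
Your proposal is correct and follows essentially the same route as the paper's proof: Parseval to pass to the Fourier side, the identity $\widehat{\Gamma}_{\theta,T_k}(m)=q^d\widehat{f}_{T_k}(m)\overline{\widehat{f}_{T_k}(\theta^{-1}m)}$, the orbit-stabilizer bound giving a factor $|O_{d-1}(\mathbb{F}_q)|\approx q^{\binom{d-1}{2}}$ for the $\theta$-sum, crude extension of the sphere sum to all of $\mathbb{F}_q^d$ plus Parseval, and finally Lemma \ref{funnerfacts} for $\|f_{T_k}\|_2$. The only cosmetic difference is that the paper splits by the sphere $\|m\|=t$ and counts $\sum_{\theta:\,\theta m=n}1=|\mathrm{Stab}(m)|$, whereas you phrase the same orbit-stabilizer count directly as a fiber bound for $\theta\mapsto\theta^{-1}m$; both yield $q^{d+\binom{d-1}{2}}\|f_{T_k}\|_2^4$.
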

\begin{proof} We see that 
    \begin{align}
    \sum_{\theta, w}(\Gamma_{\theta, T_k}(w) - \widehat{ \Gamma}_{\theta, {T_k}}(0) )^{2}\ &=\ q^d \sum_{\theta, m \neq 0} |\Gamma_{\theta, {T_k}}(m)|^2 \\
    &=\ q^{3d} \sum_{\theta, m \neq 0} | \widehat{ f }_{T_k}(m) |^{2} | \widehat{ f }_{T_k}(\theta^{-1}m) |^{2} \\
    &=\  q^{3d} \sum_{ m \neq 0} | \widehat{ f }_{T_k}(m)  |^{2} \sum_{\theta}|  \widehat{ f }_{T_k}(\theta^{-1}m) |^{2} \\
    &=\ q^{3d} \sum_{t} \sum_{\substack{m \neq 0 \\ ||m|| = t}} | \widehat{ f }_{T_k}(m) |^{2} \sum_{\substack{n \neq 0 \\ || n || = t}} | \widehat{ f }_{T_k}(n) |^{2} \sum_{\substack{\theta \\ \theta m = n}} 1. \label{beforemattilad}
\end{align}
Note that for $\theta, \phi \in \oq{d}$, we have that $\theta m = n$ and $\phi m = n $ if and only if $\phi^{-1} \theta $ fixes $m$.  Therefore, the sum over $\theta$ occurs the size of the stabilizer of $m$ many times, which is just $|O_{d-1}(\mathbb{F}_{q})|$ since $m, n \neq 0$. Therefore, we may write \eqref{beforemattilad} as
\begin{equation}
     q^{3d} |O_{d-1}(\mathbb{F}_{q})| \sum_{t} \left( \sum_{\substack{m \neq 0 \\ ||m|| = t}} | \widehat{ f }_{T_k} (m) |^{2}\right)^2. \label{mattilad}
\end{equation}
We write the square of sums as the product of two sums.  We dominate one of these sums by dropping all conditions on $m$ by positivity:

\begin{equation}
    \sum_{\substack{m \neq 0 \\ ||m|| = t}} | \widehat{ f }_{T_k} (m) |^{2}\ \leq\ \sum_m | \widehat{ f }_{T_k} (m) |^{2}\ =\ q^{-d} ||f_{T_k}||_{2}^2,
\end{equation}
where the last equality follows from Parseval.  We dominate the rest of \eqref{mattilad} by removing the non-zero condition:

\begin{equation}
    \sum_t \sum_{\substack{m \neq 0 \\ ||m|| = t}} | \widehat{ f }_{T_k} (m) |^{2}\ \leq\ \sum_m | \widehat{ f }_{T_k} (m) |^{2}\ =\ q^{-d} ||f_{T_k}||_{2}^2.
\end{equation}

All in all, we get that 
\begin{equation}
    \sum_{\theta, w}(\Gamma_{\theta, {T_k}}(w) - \widehat{ \Gamma}_{\theta, {T_k}}(0) )^{2}\ \leq\ q^{d}|O_{d-1}(\mathbb{F}_{q})| ||f_{T_k}||_2^{4}.
\end{equation}  

Plugging in $||f_{T_k}||_{2}$ from Lemma \ref{funnerfacts} and the size of $|O_{d-1}(\mathbb{F}_q)|$ from Proposition \ref{sizeoforthogonalgroup} gives the desired result.\bigskip
\end{proof}

\begin{proof}[Proof of Lemma \ref{hammerindd}]
    
    As in the proof of Lemma \ref{hammerin2d}, we proceed by induction on $n$. Denote by $A_n$ the sum $\sum_{\theta, w} \lambda^n_\theta(w) \Gamma_{\theta, {T_k}}(w)$.  We first obtain a bound for $A_1$. By Cauchy Schwarz, we have that 
    \begin{equation}
        \sum_{\theta, w}\lambda_\theta(w)\Gamma_{\theta, {T_k}}(w)\ \leq\ \left(\sum_{\theta, w}\lambda_{\theta}^2(w) \right)^{1/2} \left(\sum_{\theta, w} \Gamma_{\theta, {T_k}}^2(w) \right)^{1/2}.
    \end{equation}    
    Applying Lemma \ref{aimlemma39} to the $\Gamma$ sum and plugging in Lemma \ref{Gammaminuszerofourierd} for $|E| \gtrsim q^{k + (d-1)/2}$ gives us that
    \begin{align}
        \sum_{\theta, w} \Gamma_{\theta, {T_k}}^2(w) \ \lesssim\ q^{{\binom{d}{2}}} \eoq{4\ell k+4}{4\ell\binom{k+1}{2}+d} + \sum_{\theta, w}(\Gamma_{\theta, {T_k}} (w)-\widehat{\Gamma}_{\theta, {T_k}}(0))^2 \\ \ \lesssim\ q^{\binom{d} {2}}\eoq{4\ell k+4}{4\ell\binom{k+1}{2}+d} + |E|^{4\ell k+2} q^{d + {\binom{d - 1} {2}}-4\ell\binom{k+1}{2}}. \label{whosgonnadominatedbase}
    \end{align}
    By a straightforward computation, we see that the first term dominates in \eqref{whosgonnadominatedbase} for $|E| \gtrsim q^{(d+1)/2}$.  After plugging in the appropriate bounds from Proposition \ref{lambdathetafactsd}, we conclude that for $|E| \gtrsim q^{\max((d+1)/2, \, \,  k+(d-1)/2)}$,
    \begin{align}
        A_1\ \lesssim\ \left(|E|^{4} q^{{\binom{d+1}{2}}-2d}\right)^{1/2} \left( \frac{|E|^{4\ell k+4}}{q^{4\ell\binom{k+1}{2}+d}} q^{{\binom{d}{2}}} \right)^{1/2}\ &=\ |E|^{2\ell k+4}q^{\frac{d^2 - 3d - 4\ell\binom{k+1}{2}}{2}} \\
        &=\ \frac{|E|^{2\ell k+4}}{q^{2\ell\binom{k+1}{2} + 2d - \binom{d+1}{2}}}.
    \end{align}

This concludes the base case.  Now, for $n \geq 2$ and $|E| \gtrsim q^{\max(\frac{d(n-1)+1}{n}, \, k + \frac{d-1}{2})}$, assume that
\begin{equation}
    A_{n-1} \ \lesssim \ \eoq{2\ell k + 2(n-1) +2 }{2\ell\binom{k+1}{2} + d((n-1)+1) - \binom{d+1}{2}}
\end{equation}
We break up $A_n$ similarly as before. \begin{align}
    A_n\ =\ \sum_{\theta,w}\lambda_\theta^n(w)\Gamma_{\theta, {T_k}}(w)\ &\leq\ \left| \sum_{\theta,w}\lambda_\theta^{n-1}(w)\left(\lambda_\theta(w)-\frac{|E|^2}{q^d}\right)(\Gamma_{\theta, {T_k}}(w)-\widehat{\Gamma}_{\theta,  {T_k}}(0))\right|\nonumber\\
    &\hspace{0.5cm}+\  \sum_{w,\theta}\lambda_\theta^n(w)\widehat{\Gamma}_{\theta, {T_k}}(0)+\frac{|E|^2}{q^{d}}A_{n-1} \\
     &=:\ I + II+ III.
\end{align}
We bound each term individually.  To bound $I$, we pull absolute values inside the sum, take the supremum of $\lambda_{\theta}^{n-1}$, and apply Cauchy Schwarz to the remaining terms to obtain
    \begin{align}
    I\ \leq\ |E|^{n-1}\left(\sum_{\theta, w} \left( \lambda_{\theta}(w) - \frac{|E|^2}{q^d} \right)^2 \right)^{1/2} \left(\sum_{\theta, w} \left(\Gamma_{\theta, {T_k}}(w) - \widehat{\Gamma}_{\theta, {T_k}}(0)\right)^2 \right)^{1/2}.
\end{align}
By Proposition \ref{lambdathetafactsd}, we have that $\sum_{\theta, w}\left(\lambda_{\theta}(w) - \frac{|E|^2}{q^d} \right)^2 \lesssim |E|^2q^{{\binom{d}{2}} + 1}$.  Bounding the $\Gamma$ sum with Lemma \ref{Gammaminuszerofourierd} for $|E| \gtrsim q^{k + (d-1)/2}$, taking square roots, and simplifying gives us for $|E| \gtrsim q^{k + (d-1)/2}$,

\begin{equation}
    I\ \leq\ |E|^{2\ell k + n+1} q^{\frac{d^2-d+2}{2} - 2\ell\binom{k+1}{2}}.
\end{equation}
By a straightforward computation, we see that for $|E| \gtrsim q^{\frac{dn+1}{n+1}}$,
\begin{equation}
    I \ \lesssim \ \eoq{2\ell k + 2n +2}{2\ell\binom{k+1}{2}+ d(n+1) - \binom{d+1}{2}}.
\end{equation}

Next, we bound $II$. From Proposition \ref{lambdathetafactsd}, when $|E| \geqsim q^{\frac{dn - d +1}{n}}$, we have $\lambda^n_{\theta}(x) \leq |E|^{2n}q^{-dn + {\binom{d + 1}{2}}}$.  Plugging this in to our expression for $II$ and using our bounds from \ref{funnerfacts} gives us that

\begin{equation}
    II\ \lesssim\ \frac{|E|^{2\ell k+2n+2}}{q^{2\ell\binom{k+1}{2}+d(n+1) - \binom{d+1}{2}}}.
\end{equation}

Finally, the bound for $III$ quickly follows from the inductive hypothesis for \\ $|E| \gtrsim q^{\max(\frac{d(n-1)+1}{n}, \, k + \frac{d-1}{2})}:$

\begin{equation}
    III\ \lesssim\ \eoq{2}{d}\frac{|E|^{2\ell k+2(n-1)+2}}{q^{2\ell\binom{k+1}{2}+d(n) - \binom{d+1}{2}}}\ =\ \frac{|E|^{2\ell k+2n+2}}{q^{2\ell\binom{k+1}{2}+d(n+1) - \binom{d+1}{2}}}.
\end{equation}

\end{proof}

\section{Proof of main theorems}

In this section, we will prove Theorem \ref{maintheoremd}, \ref{extendsmall2023}, and \ref{maintheorem2}.  Recall from the discussion and remarks under \ref{congruenceclasstalk} and \ref{numembeddingsofsimplextrees} that for an embedding $h$ of a simplex tree $\TT$, we may assume that the embeddings of all simplices $S \in \TT$ (obtained by restricting the domain of $h$ to vertices of $S$) are nondegenerate.

\subsection{The setup}

For a given simplex tree $\TT$, we would like to find an $s$ such that $|E| \gtrsim q^s$ implies that $E$ contains a positive proportion of congruence classes of $\TT$ in $\mathbb{F}_q^d$.  Denote $\mathbb{D}$ as the set of congruence classes of $ \TT $. For $\delta \in \mathbb{D} $ ($\delta$ a congruence class of $\TT$), we define $\nu_{\TT}(\delta)$ as the number of embeddings of $\TT$ in $E$ that are in the congruence class $\delta$. We denote by $ \Delta(E) $ the size of the support of $\nu_{\TT}$, or in other words, the number of congruence classes of $\TT$ in $E$.  \bigskip

By an application of Cauchy-Schwarz, we have that 

\begin{equation}
    \Delta(E)\ \geq \ \frac{(\sum_{\delta \in \mathbb{D}}\nu_{\TT}(\delta))^{2}}{\sum_{\delta \in \mathbb{D}} \nu_{\TT}^{2}(\delta)}.
\end{equation}

The sum in the numerator runs over all possible $|V(\TT)|$-tuples in $E$ (where $V(\TT)$ denotes the vertex set of $\TT$). Hence we have that

\begin{equation}
    \Delta(E)\ \geqsim \ \frac{|E|^{2|V(\TT)|}}{\sum_{\delta \in \mathbb{D}} \nu_{\TT}^{2}(\delta)}.
\end{equation}

To obtain a lower bound on $\Delta(E)$, it suffices to get an upper bound on $\stsum$.  More precisely, to prove Theorems \ref{maintheoremd} and \ref{maintheorem2}, we need to show that for the $s$ in the theorem statements, $\Delta(E)$ is on the order of $ q^{c(\TT)}$ whenever $|E| \gtrsim q^s$.  By the setup above, we see that
\begin{align}
\Delta(E) \geqsim q^{c(\TT)} \quad \Longleftrightarrow \quad
\stsum \lesssim {\eoq{2|V(\TT)|}{c(\TT)}}.    
\end{align}

Our first goal is to find an expression of a more workable form for $\stsum$, the number of pairs of congruent embeddings of $\TT$. We build this expression recursively, by first counting the number of pairs of congruent embeddings of simplices, then sub-simplex trees, and finally the full simplex tree $\TT$. \bigskip

For $u, u' \in E$ and an $n$-simplex $S$, we begin by finding an expression for the number of pairs of congruent embeddings of $S$ in $E$ based at $u$ and $u'$, i.e. congruent embeddings $h_1, h_2: V(S) \to E$ of $n$-simplices into $E$ such that $h_1(v_0) = u$ and $h_2(v_0) = u'$.  The following exposition follows \cite{groupactions} closely, but our notation differs and these ideas will be developed further later, so we present it here. \bigskip

We define $\lambda_{\theta}(w)$ as in Definition \ref{lambdathetadef}.  This is the number of pairs $x, y \in E$ such that the `rigid~motion' $\rho(\theta, w)$, consisting of a rotation by $\theta \in \oq{d}$ followed by a translation by $w$, sends $y$ to $x$.  By the discussion under \ref{congruenceclasstalk}, two nondegenerate $n$-simplices $S_x = (x_0, \ldots, x_n)$ and $S_y = (y_0, \ldots, y_n)$ with vertices in $\mathbb{F}_q^d$ are congruent if and only if there exists a rigid motion $\rho(\theta, w)$ such that $x_i = \rho(\theta, w)y_i$ for all $i$. Therefore, we see that $\lambda^{n+1}_{\theta}(w)$ counts the number of pairs of congruent $n$-simplices sent to each other by the transformation $\rho(\theta, w)$ (as we need to find $n+1$ pairs of points such that $\rho(\theta, w)$ sends the first element of each pair to the second).  For the rest of this section, when the context is clear, a simplex $S$ living in $\mathbb{F}_q^d$ is actually an embedding of a simplex into $\mathbb{F}_q^d$.  \bigskip

For $u, u' \in \mathbb{F}_q^d$, we also frequently make use of the term $\lambda_{\theta}(u - \theta u')$, which counts pairs $x, x'$ such that the transformation given by a rotation by $\theta$ and translating the image of $u'$ to $u$, also transforms $x'$ to $x$.  Therefore, $\lambda_{\theta}^n(u - \theta u')$ counts the number of pairs of embeddings of $n$-simplices, one of which contains $u$ as a vertex and the other of which contains $u'$, that are sent to each other by first rotating by $\theta$ and then translating the image of $u'$ to $u$ (the exponent is $n$ and not $n+1$ as $u, u'$ are already fixed). \bigskip

For $\theta, \phi \in \oq{d}$ and $n$-simplices $S_x = (x_0, \ldots, x_n)$ and $S_y = (y_0, \ldots, y_n)$ the transformations $\rho(\theta, x_0 - \theta y_0)$ and $\rho(\phi, x_0 - \phi y_0)$ both send $y_i$ to $x_i$ for all $i$ if and only if $\theta$ and $\phi$ differ by an element in the stabilizer of the $n$-simplex $S_{y-y_0} = (0, y_1 - y_0, \ldots, y_n - y_0)$. Since for congruent $n$-simplices $S_x$ and $S_y$, the corresponding pinned simplices $S_{x-x_0}$ and $S_{y-y_0}$ have conjugate stabilizer groups, we may define $\Stab(S_x)$ to be the common size of the stabilizer $S_{y-y_0}$ of simplices $S_y$ congruent to $S_x$.  We see that for $u, u' \in E \subset \mathbb{F}_q^d$, the sum 
\begin{equation}
    \sum_{\theta \in \oq{d}} \lambda_{\theta}^{n}(u - \theta u')
\end{equation}
counts each pair of congruent embeddings of $n$-simplices $(S_x, S_y)$ in $E$ exactly $\Stab(S_x)$ many times.  Let $\Stab(n)$ denote the minimum size of the stabilizer of a $n$-simplex in $\mathbb{F}_q^d$.  We have the following fact from \cite{groupactions} for $n \leq d$, which we easily extend to $n \geq d$:


\begin{prop} \label{stabsizelemma}
    The minimum of the stabilizer of a dimension $k$ simplex is
    \begin{equation*}
       \mathrm{Stab}(n)\ \approx \ \begin{cases}
         \oqsize{n} \ \approx \ q^{\binom{d-n}{2}} & n < d-1 \\
         1 & n \geq d-1.
    \end{cases} 
    \end{equation*}
\end{prop}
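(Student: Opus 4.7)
The plan is to reduce the stabilizer of a simplex to the pointwise stabilizer of a subspace, then exploit the orthogonal decomposition of $\fqd$.

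First, I would translate so that the first vertex of the simplex is at the origin: the stabilizer of $S_y = (y_0, y_1, \ldots, y_n)$ is identified with the stabilizer of $S_{y - y_0} = (0, y_1 - y_0, \ldots, y_n - y_0)$. An element $\theta \in O_d(\FF_q)$ lies in this stabilizer exactly when $\theta$ fixes each $y_i - y_0$, equivalently when $\theta$ acts as the identity on the subspace $V = \mathrm{span}(y_1 - y_0, \ldots, y_n - y_0)$. So the computation reduces to bounding the minimum size of the pointwise stabilizer of $V$ in $O_d(\FF_q)$ as $V$ varies over subspaces arising from nondegenerate $n$-simplices.

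For the case $n \geq d$, nondegeneracy of the simplex forces the edge vectors to span all of $\fqd$, so any stabilizing $\theta$ fixes a spanning set and must be the identity. Thus $\Stab(n) = 1$, agreeing with the claim. For $n \leq d-1$, I would exhibit a nondegenerate $n$-dimensional subspace $V \subset \fqd$ with respect to the standard bilinear form (such $V$ exist by the classification of quadratic forms over $\FF_q$) and then realize $V$ as the span of the edge vectors of a nondegenerate $n$-simplex. Because $V$ is nondegenerate, we have an orthogonal direct sum $\fqd = V \oplus V^\perp$. Any $\theta \in O_d(\FF_q)$ that acts trivially on $V$ must preserve $V^\perp$ and restrict to an orthogonal transformation there, and conversely any orthogonal transformation of $V^\perp$ extends by the identity on $V$ to an element of $O_d(\FF_q)$. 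Hence the pointwise stabilizer of $V$ is isomorphic to $O_{d-n}(\FF_q)$, and by Lemma \ref{sizeoforthogonalgroup} this has size $\approx q^{\binom{d-n}{2}}$. At $n = d-1$ this formula specializes to $q^{\binom{1}{2}} = 1$, consistent with the stated value for $n \geq d-1$.

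The main obstacle is verifying that the minimum is actually attained by the nondegenerate choice of $V$, rather than by some degenerate $V$ giving a smaller stabilizer. I would address this by observing that when $V$ has a nontrivial radical $R \subseteq V \cap V^\perp$, the pointwise stabilizer of $V$ contains, in addition to orthogonal transformations on a complement, nontrivial unipotent transvections acting along $R$, so the stabilizer is at least as large as $|O_{d-n}(\FF_q)|$ in these cases as well. Therefore the minimum is indeed realized by nondegenerate subspaces $V$, yielding the claimed estimate $\Stab(n) \approx q^{\binom{d-n}{2}}$ for $n \leq d-1$ and $\Stab(n) \approx 1$ for $n \geq d-1$.
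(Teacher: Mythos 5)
Your reduction to the pointwise stabilizer of the span $V$ and the spanning argument for $n \geq d$ are exactly what the paper does; for $n \leq d-1$ the paper simply cites \cite{groupactions}, so there your argument is a genuine expansion rather than a parallel route. The orthogonal-decomposition computation for a subspace $V$ on which the form is nondegenerate is correct and identifies the pointwise stabilizer with $O_{d-n}(\FF_q)$, and the specialization at $n = d-1$ is handled properly.

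The final paragraph, however, does not hold up. When $V$ carries a nontrivial radical $R$, there is no orthogonal complement of $V$, so there is no natural copy of $O_{d-n}(\FF_q)$ sitting inside the pointwise stabilizer of $V$, and the transvections you invoke are heavily constrained by the isometry condition. In fact the claim ``the stabilizer is at least as large as $|O_{d-n}(\FF_q)|$'' is simply false in small cases: take $d = 3$, $n = 2$, and $V$ a $2$-plane whose restricted form has a $1$-dimensional radical. Then $V^{\perp} \subset V$, and a direct check shows that any $\theta \in O_3(\FF_q)$ fixing $V$ pointwise must also fix a complementary line, hence $\theta = I$; meanwhile a nondegenerate $2$-plane has pointwise stabilizer $O_1(\FF_q) = \{\pm I\}$ of size $2$. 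So the degenerate case produces a \emph{strictly smaller} stabilizer, not a larger one. The proposition survives because the pointwise stabilizer of \emph{any} $n$-dimensional subspace turns out to have size $\approx q^{\binom{d-n}{2}}$ regardless of the isometry type of the restricted form (one can see this by an orbit--stabilizer count of ordered isotropic tuples inside $W^{\perp}$, where $W$ is a nondegenerate complement of $R$ in $V$), but your argument does not establish the needed lower bound in the degenerate case. You should either carry out that count or, as the paper does, simply cite \cite{groupactions} for $n \leq d-1$.
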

The case for $n \geq d$ follows trivially from the fact that a simplex $S_x = (x_0, \ldots, x_n)$ of dimension $d$ or larger, the vectors $x_1 - x_0, \ldots, x_n - x_0$ may span all of $\mathbb{F}_q^d$ hence the stabilizer is minimally 1. \bigskip

We note that since we can assume that all simplices are nondegenerate, the stabilizers of our simplices are always of minimal size.  Therefore, the expression

\begin{equation}\label{eqn:simplexDdef}
    D_S(u, u')\ =\ \frac{1}{\Stab(\dim(S))}\sum_{\theta \in \oq{d}} \lambda_{\theta}^{\dim(S)}(u - \theta u')
\end{equation}

counts each pair of congruent embeddings of $S$ in $E$ based at $u$ and $u'$ at on the order of one time, since without the stabilizer term out front we count each pair of congruent embeddings on the order of $\Stab(\dim(S))$ times. \bigskip

We now use the above methods to obtain an expression for the number of pairs of congruent embeddings $h_1, h_2$ of a free rooted simplex tree $(\TT, S, v_0) $ based at $u$ and $u'$ (i.e.  $h_1(v_0) = u$ and $h_2(v_0) = u'$). 

\begin{prop}\label{bigsumforFRST}
    Suppose we have a free rooted simplex tree $(\TT, S, v_0)$, for $S \in \TT$ and $v_0 \in S$ a free vertex.  Define $D_{(\TT, S, v_0)}: E^2 \to \mathbb{R}$ recursively as

\begin{equation*}
    D_{(\TT, S, v_0)}(u, u')\ =\ \frac{1}{\Stab(\dim(S))} \sum_{\theta \in \oq{d}} \, \prod_{\substack{ v \in V(S)  \\ v \neq v_0}} \, \sum_{\substack{x, x' \in E \\ x - \theta x' \\ = u - \theta u'}} \, \, \prod_{B \in \mathcal{B}(S, v)}D_{B}(x, x'). \label{freesimplextreesum} 
\end{equation*}
    Then for each $u, u' \in E$, $D_{(\TT, S, v_0)}(u, u')$ is on the order of the number of pairs of congruent embeddings of $(\TT, S, v_0)$ in $E$ based at $u$ and $u'$. 
\end{prop}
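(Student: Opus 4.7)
The plan is to prove this by strong induction on the number of simplices $|\TT|$. First, I would handle the base case $|\TT| = 1$: since $S$ has no branches, the empty product over $B \in \mathcal{B}(S, v)$ equals $1$, and the inner double sum collapses to $\sum_{x, x' \in E : x - \theta x' = u - \theta u'} 1 = \lambda_\theta(u - \theta u')$. Taking the product over the $\dim(S)$ non-root vertices of $S$ yields $\lambda_\theta^{\dim(S)}(u - \theta u')$, so the formula reduces to $\frac{1}{\Stab(\dim(S))} \sum_\theta \lambda_\theta^{\dim(S)}(u - \theta u')$, which is exactly \eqref{eqn:simplexDdef}. As shown in the paragraph preceding \eqref{eqn:simplexDdef}, this counts pairs of congruent nondegenerate embeddings of $S$ based at $u, u'$ on the order of one time each.

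For the inductive step, I would decompose a pair $(h_1, h_2)$ of congruent embeddings of $(\TT, S, v_0)$ based at $u, u'$ into (i) the restrictions $(h_1|_S, h_2|_S)$ to the root simplex, and (ii) for each non-root vertex $v \in V(S)$ and each branch $B \in \mathcal{B}(S, v)$, the restrictions $(h_1|_B, h_2|_B)$ to $B$. Because every edge of $\TT$ lies inside a single simplex, global congruence reduces to simplex-wise congruence; and because $\TT$ is a tree, distinct branches meet $S$ only at their attachment vertex and do not interact otherwise, so the number of extensions of a fixed root-simplex pair factors as $\prod_{v \neq v_0} \prod_{B \in \mathcal{B}(S, v)}(\text{pairs of congruent embeddings of } B \text{ based at } h_1(v), h_2(v))$. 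The inductive hypothesis would replace each factor by $D_B(h_1(v), h_2(v))$ up to an absolute constant. To re-express the sum over pairs $(h_1|_S, h_2|_S)$ via rotations, I would invoke the observation above \eqref{eqn:simplexDdef} that the set of $\theta \in \oq{d}$ witnessing a given congruence of $S$ is a coset of the stabilizer of size $\Stab(\dim(S))$ (by Proposition \ref{stabsizelemma} and our standing nondegeneracy assumption). Summing over tagged pairs (fix $\theta$, then choose $(x_v, x'_v)_{v \neq v_0}$ satisfying $x_v - \theta x'_v = u - \theta u'$), dividing by $\Stab(\dim(S))$, and interchanging the sum over tuples with the product over $v \neq v_0$ (possible because the summand factors across $v$) yields exactly the stated formula.

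The main obstacle will be controlling degenerate configurations. A priori, the independent sums over $(x_v, x'_v)$ for different $v$ permit coincidences among the chosen points, so some of the tuples counted do not correspond to embeddings of $S$ at all. However, as recorded in Section~2 via Theorem~7.5 of \cite{groupactions} and Theorem~1 of \cite{alexmcdonald}, the number of degenerate simplex embeddings is of strictly lower order than the nondegenerate ones, so both the simplex count and, by induction, each branch count are preserved up to an absolute constant, and the $\approx$ equivalence in the statement survives. A smaller concern is the accumulation of absolute constants through successive recursions, but since $|\TT|$ is fixed independent of $q$, the product of constants from each level is itself an absolute constant, which does not affect the order-of-magnitude conclusion.
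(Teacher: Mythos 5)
Your proposal is correct and follows essentially the same route as the paper: strong induction on $|\TT|$, with the base case reducing to $D_S(u,u') = \frac{1}{\Stab(\dim(S))}\sum_\theta \lambda_\theta^{\dim(S)}(u-\theta u')$ via the empty product, and the inductive step built on decomposing a pair of congruent embeddings into a root-simplex pair plus branch pairs, applying the inductive hypothesis to each $D_B$, and using the fact that the set of $\theta$ realizing a fixed congruence of the root simplex is a coset of size $\Stab(\dim(S))$. Your explicit attention to degenerate configurations (coincident or dependent vertex choices produced by the independent $x,x'$ sums) is a point the paper addresses only through its standing assumption that all simplex embeddings may be taken nondegenerate, so this is a useful clarification rather than a divergence; likewise your remark on constant accumulation across the $|\TT|$ levels of recursion is implicit in the paper's ``on the order of'' bookkeeping.
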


\begin{proof}
     If $(\TT, S, v_0)$ is a simplex, then $|\mathcal{B}(S, v)| = 0$.  Defining the empty product over $\mathcal{B}(S, v) = \emptyset$ to be 1, we see that $D_{(\TT, S, v_0)}(u, u')$ reduces to $D_S(u, u')$, and by the discussion above, $D_S(u, u')$ is on the order of the number of pairs of congruent embeddings of $(\TT, S, v_0)$ based at $u, u'$. \bigskip
     
     We now proceed by induction on the number of simplices in $(\TT, S, v_0)$. We can obtain an expression on the order of the number of pairs of congruent embeddings of $(\TT, S, v_0)$ in $E$ based at $u$ and $u'$ as follows: \begin{enumerate}
        \item Fix a $\theta$, and consider all embeddings of simplices $h_1, h_2: V(S) \to E$ based at $u, u'$ such that the rotation $\theta$ followed by the translation sending $\theta u'$ to $u$ also sends $h_2(v)$ to $h_1(v)$ for all $v \in V(S) \setminus v_0$.
        
        \item For each such fixed pair of embeddings of $S$, count the number of ways this pair $h_1, h_2$ can be extended to a pair of congruent embeddings of $(\TT, S, v_0)$ in $E$.

        \item Sum over all $\theta$ and divide the final sum by $\Stab(\dim(S))$.
    \end{enumerate} 

    First, we explain why
    \begin{equation}
        \prod_{\substack{v \in V(S) \\ v \neq v_0}}  \, \, \sum_{\substack{x, x' \in E \\ x - \theta x' = u - \theta u'}} \, \, \prod_{B \in \mathcal{B}(S, v)}D_{B}(x, x')
    \end{equation}
    corresponds to steps 1 and 2.  For notational convenience, let $\dim(S) = n$, so that $S$ has $n+1$ vertices. For a fixed $\theta$, we consider all congruent embeddings $h_1, h_2: V(S) \to E$ such that the transformation $\rho(\theta, u - \theta u')$ sends $h_2(v)$ to $h_1(v)$ for all $v \in V(S)$.  Each pair of these congruent embeddings $h_1, h_2$ corresponds to $n$ pairs of points $(a_i, b_i) \in \mathbb{F}_q^2$ (one pair for each $v \neq v_0$, as $u$ and $u'$ are already fixed) such that $\rho(\theta, u - \theta u')b_i = a_i$. For a fixed vertex $v \in V(S) \setminus v_0$, the sum over $x,x' \in E$ ranges over all possible candidates for $h_1(v), h_2(v)$ with respect to a fixed transformation. For each such $x, x'$ and a fixed branch $B$ of $(\TT, S, v_0)$ at the vertex $v$, by the inductive hypothesis, $D_B(x,x')$ is on the order of the number of pairs of congruent embeddings of $B$ in $E$ based at $x$ and $x'$.  The product over all branches $B$ at the vertex $v$ is on the order of the number of pairs of congruent embeddings based at $x$ and $x'$ of the structure obtained by joining all branches in $\mathcal{B}(S, v)$ at $v$. Taking the product over all vertices $v \in V(S)$ distinct from $v_0$ yields an expression on the order of the number of pairs of congruent embeddings of $(\TT, S, v_0)$ extending $h_1,h_2$. \bigskip

    We now argue step 3.  Fixing a pair of based congruent embeddings $h_1, h_2: V(S) \to E$, of $S$, for each $\phi~\in~\mathrm{Stab}(h_2(S))$ and each $\theta$ such that $\rho(\theta, u - \theta u')$ sends $h_2(S)$ to $h_1(S)$, we also have that $\rho(\theta \phi, u - \theta \phi u')$ sends $h_2(S)$ to $h_1(S)$.  Therefore, once we sum over all $\theta$, we overcount each pair of embeddings of $S$ on the order of $\mathrm{Stab}(h_2(S)) = \mathrm{Stab}(\dim(S))$ many times (as $h_2(S)$ is nondegenerate).  Therefore, Steps 1 and 2 overcount each pair of congruent embeddings of $\mathcal{T}$ by the same factor, and we divide by $\mathrm{Stab}(\dim(S))$ to complete the proof of the Proposition.
    \bigskip
\end{proof}


Finally, we construct an expression for the number of pairs of congruent embeddings of a general rooted simplex tree $(\TT, S_0)$.

\begin{prop}
    Consider a rooted simplex tree $(\TT, S_0)$, and define

\begin{equation*}
    R_{(\TT, S_0)}\ =\  \frac{1}{\Stab(\dim(S_0))} \sum_{w \in \mathbb{F}_q^d} \, \, \sum_{\theta \in \oq{d}} \, \,  \prod_{v \in V(S_0)}  \, \, \sum_{\substack{x, x' \in E \\ x - \theta x' = w}} \, \, \prod_{B \in \mathcal{B}(S_0, v)}D_{B}(x, x'). \label{bigsumddim}
\end{equation*}
    Then we have that 
    \begin{equation*}
        \stsum\ \approx \ R_{(\TT, S_0)}.
    \end{equation*}
\end{prop}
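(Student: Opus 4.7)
The plan is to mirror the inductive counting argument of the previous proposition, but adapted to the fact that the root simplex $S_0$ has no parent vertex to pin down. First, I would observe that $\stsum$ is, by definition, the total number of pairs of congruent embeddings of $\TT$ in $E$ (summed over congruence classes). The strategy is to decompose this count by the rigid motion $\rho(\theta, w)$ that maps the $S_0$-component of one embedding to the $S_0$-component of the other. Since both embeddings of $S_0$ are nondegenerate and congruent, at least one such rigid motion exists, and the number of them is exactly $\Stab(\dim(S_0))$ by the same stabilizer analysis used earlier in the section.

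Next, for a fixed $(\theta, w)$, I would check that the expression
\[
\prod_{v \in V(S_0)} \sum_{\substack{x, x' \in E \\ x - \theta x' = w}} \prod_{B \in \mathcal{B}(S_0, v)} D_B(x, x')
\]
counts precisely (up to constants) the number of pairs $(h_1, h_2)$ of congruent embeddings of $\TT$ whose restrictions to $V(S_0)$ are related by $\rho(\theta, w)$. Indeed, for each vertex $v \in V(S_0)$, the inner sum over $(x, x')$ ranges exactly over choices of $(h_1(v), h_2(v))$ compatible with the rigid motion, and for each such choice the previous proposition guarantees that $D_B(x, x')$ is on the order of the number of pairs of congruent embeddings of the branch $B$ based at $(x, x')$. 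The product over $B \in \mathcal{B}(S_0, v)$ and then over $v \in V(S_0)$ is legitimate precisely because $\TT$ is a simplex tree: distinct branches at $v$, and branches at distinct vertices of $S_0$, are pairwise vertex-disjoint outside of $V(S_0)$, so the embedding counts are independent and multiply. This is the only structural step where tree-ness is essential; the rest is bookkeeping.

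Finally, summing over $\theta \in \oq{d}$ and $w \in \fqd$ produces each pair $(h_1, h_2)$ weighted by the number of rigid motions sending the $S_0$-part of $h_2$ to the $S_0$-part of $h_1$, which is $\Stab(\dim(S_0))$. Dividing by $\Stab(\dim(S_0))$ therefore yields $\stsum$ up to a constant, giving $\stsum \approx R_{(\TT, S_0)}$. The main potential obstacle is the combinatorial bookkeeping — in particular, ensuring that the sum over $w$ correctly replaces the role of a fixed base point pair $(u, u')$ from the free rooted case, so that all $|V(S_0)|$ vertices of the root are floated rather than $|V(S_0)| - 1$ — but this is exactly what the extra $w$-sum encodes, and no new idea beyond the previous proposition is needed.
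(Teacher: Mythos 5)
Your proposal is correct and follows essentially the same three-step decomposition as the paper's own proof: fix $\rho(\theta, w)$, count based extensions via the product of $D_B$ terms (invoking the previous proposition inductively), and then correct for the stabilizer overcount after summing over $\theta$ and $w$. The explicit remark about branches being pairwise vertex-disjoint, which justifies the multiplicativity of the counts, is a sensible clarification that the paper leaves implicit but is otherwise the same argument.
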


\begin{proof}
    Note that $\stsum$ counts the number of pairs of congruent embeddings of $(\TT, S_0)$ in $E$.  We can obtain an expression on the order of this quantity as follows: \begin{enumerate}
        \item Fix a $\theta, w$, and consider all embeddings of simplices $h_1, h_2: V(S_0) \to E$ such that $h_1(S_0) = \rho(\theta, w)h_2(S_0)$.

        \item For each fixed pair of congruent embeddings of the root $S_0$ corresponding to $\rho(\theta, w)$, count the number of ways this pair of embeddings $h_1, h_2$ can be extended to a pair of congruent embeddings of $(\TT, S_0)$ in $E$.

        \item Sum over all transformations $\rho(\theta, w)$, and divide our final sum by $\Stab(\dim(S_0))$.
    \end{enumerate} 

    The quantity
    \begin{equation}
        \prod_{v \in V(S_0)}  \, \, \sum_{\substack{x, x' \in E \\ x - \theta x' = w}} \, \, \prod_{B \in \mathcal{B}(S_0, v)}D_{B}(x, x')
    \end{equation}
    corresponds to steps 1 and 2.  
    For notational convenience, let $\dim(S_0) = n$.  For a fixed $\theta$, we consider all congruent embeddings $h_1, h_2: V(S) \to E$ such that the transformation $\rho(\theta, w)$ sends $h_2(v)$ to $h_1(v)$ for all $v \in V(S)$.  Each pair of these embeddings $h_1, h_2: V(S) \to E$ corresponds to $n+1$ pairs of points $(a_i, b_i)$ such that $\rho(\theta, w)b_i = a_i$ for all $i$.  For a $v \in V(S_0)$, the sum over $x,x' \in E$ ranges over all possible candidates for $h_1(v), h_2(v)$ with respect to a fixed $\rho(\theta, w)$.  We weight each candidate as in Proposition \ref{bigsumforFRST}, and taking the product over all $v \in V(S_0)$ yields a quantity on the order of the number of pairs of congruent embeddings of $(\TT, S_0)$ extending $h_1,h_2$. \bigskip

    Now we argue Step 3 by a similar argument as in Proposition \ref{bigsumforFRST}.  Fixing a pair of congruent embeddings $h_1, h_2: V(S_0) \to E$, for each $\phi \in \mathrm{Stab}(h_2(S_0))$ and each pair $\theta, w$ such that $\rho(\theta, w)$ sends $h_2(S_0)$ to $h_1(S_0)$, we also have that $\rho(\theta \phi, w)$ sends $h_2(S_0)$ to $h_1(S_0)$.  Therefore, the sum over all $\theta$ and $w$ counts $h_1$, $h_2$ exactly $\mathrm{Stab}(h_1(S_0)) = \Stab(\dim(S_0))$ many times.  Therefore, after summing over all $\theta$ and dividing by $ \Stab(\dim(S_0))$, we see that each pair of congruent embeddings of $\TT$ is counted on the order of one time. \bigskip
\end{proof}

Therefore, in order to prove both Theorem \ref{maintheoremd} and \ref{maintheorem2}, it is equivalent to show the following:

\begin{prop} \label{thebigguy}
    For a rooted simplex tree $(\TT, S_0)$, and for any $1 \leq k < \frac{d+1}{2}$, define
    \begin{equation}
        N_k \ = \ k+\sum_{\substack{S \in \TT \\ \dim(S) > k}} (\dim(S) -k).
    \end{equation}    
    Then for $s = \max \left(\frac{dN_k + 1}{N_k+1}, \, \, k + \frac{d-1}{2} \right) $, we have that whenever $|E| \gtrsim q^s$,
    \begin{equation}
        R_{(\TT, S_0)}\ \lesssim \ \eoq{2|V(\TT)|}{c(\TT)}.
    \end{equation}
    If we additionally impose the condition that $d=2$ and that $q \equiv 3 \pmod 4$, for $s = \frac{4N_1}{2N_1 + 1}$ we have that whenever $|E| \gtrsim q^s$ the above holds as well.
    
\end{prop}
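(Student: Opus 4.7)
The plan is to reduce the general sum $R_{(\TT, S_0)}$ to a canonical sum corresponding to a much simpler simplex tree, and then apply Lemmas \ref{hammerindd} and \ref{hammerin2d} as a black box. The canonical form will consist of a single ``large'' simplex $S^*$ of dimension $N_k$ serving as the root, with a single $k$-weak simplex tree $T_k$ attached at one designated vertex of $S^*$. We establish the reduction by repeatedly applying two volume-preserving operations on simplex trees: \emph{branch shifting}, which moves the attachment point of a branch from one vertex of a simplex to another, and \emph{simplex unbalancing}, which merges two simplices of dimension $> k$ sharing a vertex into a single larger simplex while creating $k$-dimensional branches to account for the missing vertices. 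Because each simplex $S$ of dimension $d_S > k$ contributes $d_S - k$ ``excess'' dimensions, repeated unbalancing collapses all such simplices into one simplex of dimension exactly $N_k = k + \sum_{\dim(S)>k}(\dim(S)-k)$, while all simplices of dimension $\leq k$ end up as branches that can be re-rooted at a common vertex by branch shifting.

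After reduction, one reads off that, up to constants,
\begin{equation}
    R_{(\TT, S_0)}\ \approx\ \frac{1}{\Stab(N_k)} \sum_{\theta, w} \lambda_{\theta}^{N_k}(w)\, \Gamma_{\theta, T_k}(w),
\end{equation}
where $T_k$ is the $k$-weak simplex tree (with some number of simplices $\ell$) inherited from the small branches of the original $\TT$, attached at a single free vertex of the big $N_k$-simplex. Here the exponent $N_k$ on $\lambda_\theta$ comes from the $N_k$ non-distinguished vertices of $S^*$ whose images are determined by $\rho(\theta,w)$, and the factor $\Gamma_{\theta, T_k}$ encodes pairs of congruent embeddings of all the $k$-simplex branches rooted at the chosen vertex of $S^*$. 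One then applies Lemma \ref{hammerindd} with $n = N_k$ (respectively Lemma \ref{hammerin2d} when $d=2$), which is valid precisely when $|E| \gtrsim q^{\max(\frac{dN_k+1}{N_k+1},\, k+\frac{d-1}{2})}$ (respectively $|E|\gtrsim q^{4N_k/(2N_k+1)}$), yielding
\begin{equation}
    R_{(\TT, S_0)}\ \lesssim\ \frac{1}{q^{\binom{d-N_k}{2}}} \cdot \frac{|E|^{2\ell k + 2N_k + 2}}{q^{2\ell \binom{k+1}{2} + d(N_k+1) - \binom{d+1}{2}}}.
\end{equation}
The final step is bookkeeping: check that $2|V(\TT)| = 2\ell k + 2N_k + 2$ (total vertices count each simplex's vertices minus sharings) and that $c(\TT)$, computed via Lemma \ref{numembeddingsofsimplextrees} and Lemma \ref{numberofconclassnsimp}, equals $2\ell \binom{k+1}{2} + d(N_k+1) - \binom{d+1}{2} - \binom{d-N_k}{2}$ (summing the contribution $\binom{k+1}{2}$ from each small simplex and $d(N_k+1)-\binom{d+1}{2}$ from the large simplex, less the stabilizer adjustment when $N_k < d-1$). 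Analogous bookkeeping in $\mathbb{F}_q^2$ matches the sharper exponent from Lemma \ref{hammerin2d}.

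The main obstacle is making the reduction step rigorous. One must verify that each application of branch shifting or simplex unbalancing replaces $R_{(\TT, S_0)}$ by $R_{(\TT', S_0')}$ for the modified tree $(\TT', S_0')$ in a way that preserves the quantity (up to $q$-independent constants) and respects the threshold on $|E|$. For branch shifting this should follow by rewriting the inner $\lambda_\theta$ sums and exploiting the symmetry of the definition $D_S(u,u') = \Stab(\dim S)^{-1} \sum_\theta \lambda_\theta^{\dim S}(u-\theta u')$ under permutations of the vertices of a single simplex, so that a branch attached at vertex $v$ can be moved to another vertex of the same simplex without changing the sum structure. For simplex unbalancing, one needs the identity that gluing a $d_1$-simplex and a $d_2$-simplex at a vertex, and then redistributing vertices, produces the same count of congruent pairs as a single $(d_1+d_2-k)$-simplex together with auxiliary $k$-simplex branches; this is the step where the parameter $k$ and the hypothesis $1 \le k < (d+1)/2$ enters essentially, since the branches must remain genuinely ``loose'' and tractable by the inductive techniques of \cite{small2023}.

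Finally, the alternative Theorem \ref{extendsmall2023} regime (when all simplices have dimension $\leq n < (d+1)/2$) is absorbed into the same framework: there $N_k = k$ and the $\lambda_\theta^{N_k}$ factor is dominated trivially, so one applies Theorem 1.12 of \cite{small2023} directly to the branches after branch shifting has consolidated the tree into $k$-weak form. The $\mathbb{F}_q^2$ case proceeds identically except that Lemma \ref{hammerin2d} replaces Lemma \ref{hammerindd}, giving the sharper threshold stated.
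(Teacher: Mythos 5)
The high-level target — reduce to a single $N_k$-simplex with a $k$-weak simplex tree attached at one vertex, then apply Lemmas~\ref{hammerindd}/\ref{hammerin2d} — matches the paper's plan, and the claimed identity $R_{(\TT,S)} = \frac{1}{\Stab(\dim S)}\sum_{t_1,\dots,t_{e(T_k)}}\sum_{\theta,w}\lambda_\theta^{\dim S}(w)\Gamma_{\theta,T_k}(w)$ in the reduced case is exactly the paper's equation \eqref{wherewesplit}. But the mechanism you propose for the reduction is wrong, and the gap is not cosmetic.

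You describe branch shifting and simplex unbalancing as \emph{volume-preserving operations} satisfying an approximate \emph{identity} $R_{(\TT, S_0)} \approx R_{(\TT', S_0')}$, and you propose to establish branch shifting by a permutation symmetry of the $\lambda_\theta$-product. Neither of these matches what the paper actually does, nor can either work. Branch shifting in the paper does \emph{not} move a branch from $v_1$ to $v_2$: it builds two auxiliary trees $\TT_1$ (delete all branches at $v_2$, \emph{duplicate} all branches at $v_1$) and $\TT_2$ (the mirror), which have different numbers of simplices than $\TT$, and proves only a threshold inequality $\ppc(\TT) \le \max(\ppc(\TT_1),\ppc(\TT_2))$. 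This is proved by inserting a complex parameter into the sum (raising the product over $\mathcal{B}(S_\ell,v_\ell)$ to a power $G_\ell(v)$), verifying the hypotheses of the Hadamard three-lines theorem, and using the geometric-mean bound $\psi(1,1)\le\psi(2,0)^{1/2}\psi(0,2)^{1/2}$. A permutation symmetry argument cannot produce this: swapping which vertex carries which branch is an exact symmetry (and reduces nothing), while what is needed is to strictly \emph{decrease} the number of child vertices, which is not a symmetry of the sum. Similarly, simplex unbalancing is not a ``merging'' of two adjacent simplices into one with auxiliary $k$-branches; it transfers free vertices between two (not necessarily adjacent) simplices $S_1,S_2$, by replacing the exponents $|F_1|,|F_2|$ on the respective $\lambda_\theta$-factors with interpolation parameters $a_1,a_2$, again invoking Hadamard three-lines, and patching up the stabilizer mismatch via Lemma~\ref{step0lemma}. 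The conclusion is again a $\ppc$-inequality, not an equality of sums.

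Concretely: your proposal asserts equalities where the paper only has (and can only have) inequalities of thresholds, and it has no Hadamard three-lines step at all — yet this is the key analytic engine that makes both operations legitimate. Without it, the claim that each step ``preserves the quantity up to $q$-independent constants'' is simply false; for instance, duplicating branches strictly increases the number of simplices and hence the sum, and transferring free vertices between simplices changes both the $\lambda_\theta$-powers and the stabilizer constants in a way that only balances after the explicit computation in Lemma~\ref{step0lemma}. You would need to replace ``rewriting and symmetry'' with a genuine interpolation argument: introduce holomorphic parameters into the $D_{\mathcal{P}(S_\ell)}$ factors, check boundedness/holomorphy on a strip, apply Corollary~\ref{repeatedhadamard}, identify the boundary values $\psi(2,0)$ and $\psi(0,2)$ with $R_{(\TT_1,S_0)}$ and $R_{(\TT_2,S_0)}$ (times explicit stabilizer corrections), and verify $2c(\TT)=c(\TT_1)+c(\TT_2)$ and $2|V(\TT)|=|V(\TT_1)|+|V(\TT_2)|$. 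Finally, the bookkeeping in your penultimate paragraph is carried out in the paper as part of Lemma~\ref{simplebigguy1}, and is fine, but only once the reduction to that lemma is correctly justified; and the iteration scheme (Step 1/Step 2 in the paper's proof of Proposition~\ref{thebigguy}) produces a \emph{collection} of reduced trees $\TT_j$ rather than a single canonical one, each of which must be handled by Lemma~\ref{simplebigguy1} and Theorem~\ref{extendsmall2023}.
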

    
From here, our goal will be to apply the Hadamard three-lines Theorem to bound $R_{(\TT, S_0)}$ effectively. 
 This geometrically corresponds to reducing the proof of Proposition \ref{thebigguy} for $\TT$ to proving Proposition \ref{thebigguy} for some collection of nicer simplex trees. 

\subsection{Two geometric operations}
In this subsection, we describe and prove the two operations we use to simplify our sum: branch shifting and simplex unbalancing.  Before we do so, we introduce some notation to make the statement and proof of these operations more clear.

\begin{definition}
    For a rooted simplex tree $(\TT, S_0)$, denote $\ppc(\TT, S_0)$ as the infimum over $s \in \mathbb{R}$ such that $R_{(\TT, S_0)} \lesssim E^{2|V(\TT)|}q^{-c(\TT)}$ (equivalently, $\stsum \lesssim E^{2|V(\TT)|}q^{-c(\TT)}$) whenever $|E| \gtrsim q^s$.
\end{definition}

\begin{lemma}[Branch Shifting]\label{branchshifting} Given a rooted simplex tree $(\TT, S_0)$, consider two simplices $S_1$ and $S_2$, that are not necessarily distinct from each other or the root vertex.  Further, suppose each of $S_1$, $S_2$ contain respective, necessarily distinct, child vertices $v_1$ and $v_2$.  Consider the rooted tree $(\TT_1, S_0)$ formed from $(\TT, S_0)$ by deleting all branches of $S_2$ at $v_2$ and duplicating all branches of $S_1$ at $v_1$. Similarly, consider the rooted tree $(\TT_2, S_0)$ formed from $(\TT, S_0)$ by deleting all branches of $S_1$ at $v_1$ and duplicating all branches of $S_2$ at $v_2$.  Then $\ppc(\TT, S_0) \leq \max(\ppc(\TT_1, S_0), \ppc(\TT_2, S_0))$ \end{lemma}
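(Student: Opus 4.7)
The plan is to decompose $R_{(\TT, S_0)}$ according to the contributions from the branches at $v_1$ and at $v_2$, and then apply Cauchy-Schwarz (equivalently, the Hadamard three-lines theorem at its midpoint, as in Corollary \ref{repeatedhadamard}) to relate $R_{(\TT, S_0)}$ to $R_{(\TT_1, S_0)}$ and $R_{(\TT_2, S_0)}$.

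Let the \emph{core} be the rooted subtree of $\TT$ obtained by deleting all branches at $v_1$ and all branches at $v_2$; this core is common to $\TT$, $\TT_1$, and $\TT_2$. Because $v_1 \neq v_2$ and any two simplices in a simplex tree share at most one vertex, the branches at $v_1$ and the branches at $v_2$ are disjoint subtrees that attach to the core only through $v_1$ and $v_2$ respectively. Given a pair $(h_1, h_2)$ of congruent embeddings of the core, the number of ways to extend to a congruent pair of embeddings of $\TT$ factors as $N_{v_1}(h_1(v_1), h_2(v_1)) \cdot N_{v_2}(h_1(v_2), h_2(v_2))$, where $N_{v_i}(u, u')$ denotes the number of pairs of congruent embeddings of $\mathcal{B}(S_i, v_i)$ based at $(u, u')$. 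Unwinding the recursive formula of Proposition \ref{bigsumforFRST}, this yields
\begin{equation*}
R_{(\TT, S_0)}\ \approx\ \sum_{\text{core pairs}} N_{v_1}(h_1(v_1), h_2(v_1)) \cdot N_{v_2}(h_1(v_2), h_2(v_2)).
\end{equation*}
For $\TT_1$, duplicating branches at $v_1$ squares the $v_1$-contribution while removing the branches at $v_2$ makes the $v_2$-contribution identically $1$, giving
\begin{equation*}
R_{(\TT_1, S_0)}\ \approx\ \sum_{\text{core pairs}} N_{v_1}(h_1(v_1), h_2(v_1))^2,
\end{equation*}
and symmetrically for $R_{(\TT_2, S_0)}$. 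A single application of Cauchy-Schwarz over the set of congruent core pairs then produces $R_{(\TT, S_0)} \lesssim R_{(\TT_1, S_0)}^{1/2} R_{(\TT_2, S_0)}^{1/2}$.

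It remains to verify that the exponents align. Write $\alpha_i$ for the number of vertices in the branches at $v_i$ other than $v_i$ itself, and $\gamma_i$ for the sum of $c(S)$ over all simplices in those branches. Then $|V(\TT)| = |V(\text{core})| + \alpha_1 + \alpha_2$ and $|V(\TT_j)| = |V(\text{core})| + 2\alpha_j$, giving $|V(\TT_1)| + |V(\TT_2)| = 2|V(\TT)|$; the same bookkeeping yields $c(\TT_1) + c(\TT_2) = 2c(\TT)$. Hence if $|E| \gtrsim q^{\max(\ppc(\TT_1, S_0),\, \ppc(\TT_2, S_0))}$, the defining bound for $\ppc$ applied to each $\TT_j$ combined with the inequality above gives
\begin{equation*}
R_{(\TT, S_0)}\ \lesssim\ |E|^{|V(\TT_1)| + |V(\TT_2)|}\, q^{-(c(\TT_1) + c(\TT_2))/2}\ =\ |E|^{2|V(\TT)|}\, q^{-c(\TT)},
\end{equation*}
which proves $\ppc(\TT, S_0) \leq \max(\ppc(\TT_1, S_0), \ppc(\TT_2, S_0))$. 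The main obstacle is the bookkeeping required to formally derive the factored decomposition of $R_{(\TT, S_0)}$ from the nested recursion of Proposition \ref{bigsumforFRST}, particularly when $S_1$ and $S_2$ sit at different depths so that the corresponding factors appear at different levels of recursion; the key structural fact that enables this is the disjointness of the branch subtrees at $v_1$ and $v_2$, which follows directly from the simplex tree axioms together with $v_1 \neq v_2$.
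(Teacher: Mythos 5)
Your proof is correct, but it takes a genuinely different route from the paper. The paper's proof works directly with the nested algebraic formula $R_{(\TT, S_0)}$: it introduces a parametrized function $\psi(a_1, a_2)$ by raising the branch products at $v_1$ and $v_2$ to real powers $a_1$ and $a_2$, then invokes the Hadamard three-lines theorem (Corollary~\ref{repeatedhadamard}) to obtain $\psi(1,1) \le \psi(2,0)^{1/2}\psi(0,2)^{1/2}$, and finally identifies $\psi(2,0) = R_{(\TT_1,S_0)}$ and $\psi(0,2) = R_{(\TT_2,S_0)}$. You instead work with the combinatorial count $M_\TT := \stsum$ of pairs of congruent embeddings, factor it as a weighted sum over congruent core pairs with weights $N_{v_1} N_{v_2}$, and apply ordinary Cauchy--Schwarz. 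This sidesteps the complex-analytic machinery entirely and is arguably cleaner here, precisely because the branch-shifting case always uses the midpoint Hölder split $s_1 = s_2 = 1/2$, where Hadamard three-lines and Cauchy--Schwarz coincide. One small point in your favor: by routing through $M_\TT$ rather than the literal nested formula $R_{(\TT,S_0)}$, the ``bookkeeping obstacle'' you flag is actually resolved, since the identity $M_\TT = \sum_{\text{core pairs}} N_{v_1} N_{v_2}$ is a direct consequence of the disjointness of the branch subtrees (which you correctly derive from the simplex-tree axioms), and $R_{(\TT,S_0)} \approx M_\TT$ with implicit constants depending only on the shape of $\TT$, not on $q$ or $|E|$.

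The one thing worth being aware of: the paper's Hadamard parametrization is not an overcomplication in context. It is reused verbatim in the proof of simplex unbalancing (Lemma~\ref{simplexunbalancing}), where the interpolation endpoints are $(k_1+k_2, 0)$ and $(0, k_1+k_2)$ and the resulting Hölder exponents $s_1, s_2$ are generally \emph{not} $1/2$. Your Cauchy--Schwarz shortcut would have to be upgraded to a full Hölder/log-convexity argument there, so the authors' uniform Hadamard framework saves duplication across the two lemmas even though it is heavier than strictly necessary for branch shifting alone.
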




Note that the quantity $N_k$ in Proposition \ref{thebigguy} is the same for $\TT, \TT_1,$ and $\TT_2$.  Therefore, the above Lemma states that in order to prove Proposition \ref{thebigguy} for $\TT$, it suffices to prove Proposition \ref{thebigguy} for $\TT_1$ and $\TT_2$. An example of branch shifting shown in Figure \ref{branchshiftingfig}, with $S_0$ being the black 3-simplex, $S_1, S_2$ both the red triangle, $v_1$ being the parent node of the green branches, and $v_2$ being the parent node of the blue branch. \bigskip

\begin{figure}[hbt!] 
\centering
\tikzset{every picture/.style={line width=1pt}} 

\begin{tikzpicture}[x=0.75pt,y=0.75pt,yscale=-1,xscale=1]

\draw  [color={rgb, 255:red, 255; green, 0; blue, 0 }  ,draw opacity=1 ] (114.37,107) -- (139.74,152.51) -- (89,152.51) -- cycle ;
\draw  [color={rgb, 255:red, 74; green, 144; blue, 226 }  ,draw opacity=1 ] (185.57,177.31) -- (140.38,203.25) -- (139.74,152.51) -- cycle ;
\draw [color={rgb, 255:red, 74; green, 144; blue, 226 }  ,draw opacity=1 ]   (185.57,177.31) -- (192.67,221) ;
\draw [color={rgb, 255:red, 126; green, 211; blue, 33 }  ,draw opacity=1 ]   (89,152.51) -- (94.67,197) ;
\draw [color={rgb, 255:red, 126; green, 211; blue, 33 }  ,draw opacity=1 ]   (89,152.51) -- (58.67,185) ;
\draw  [color={rgb, 255:red, 255; green, 0; blue, 0 }  ,draw opacity=1 ] (481.37,106) -- (506.74,151.51) -- (456,151.51) -- cycle ;
\draw  [color={rgb, 255:red, 255; green, 0; blue, 0 }  ,draw opacity=1 ] (320.37,108) -- (345.74,153.51) -- (295,153.51) -- cycle ;
\draw [color={rgb, 255:red, 126; green, 211; blue, 33 }  ,draw opacity=1 ]   (295,153.51) -- (329.16,182.57) ;
\draw [color={rgb, 255:red, 126; green, 211; blue, 33 }  ,draw opacity=1 ]   (295,153.51) -- (294.48,197.96) ;
\draw [color={rgb, 255:red, 126; green, 211; blue, 33 }  ,draw opacity=1 ]   (295,153.51) -- (266.55,188.18) ;
\draw [color={rgb, 255:red, 126; green, 211; blue, 33 }  ,draw opacity=1 ]   (295,153.51) -- (250.55,153.77) ;
\draw   (121.06,47.95) -- (147.24,80.82) -- (114.37,107) -- (88.19,74.13) -- cycle ;
\draw [color={rgb, 255:red, 155; green, 155; blue, 155 }  ,draw opacity=1 ]   (122.06,48.95) -- (115.37,108) ;
\draw [color={rgb, 255:red, 155; green, 155; blue, 155 }  ,draw opacity=1 ]   (89.19,75.13) -- (148.24,81.82) ;
\draw   (488.06,46.95) -- (514.24,79.82) -- (481.37,106) -- (455.19,73.13) -- cycle ;
\draw [color={rgb, 255:red, 155; green, 155; blue, 155 }  ,draw opacity=1 ]   (455.19,73.13) -- (514.24,79.82) ;
\draw [color={rgb, 255:red, 155; green, 155; blue, 155 }  ,draw opacity=1 ]   (488.06,46.95) -- (481.37,106) ;
\draw   (327.06,48.95) -- (353.24,81.82) -- (320.37,108) -- (294.19,75.13) -- cycle ;
\draw [color={rgb, 255:red, 155; green, 155; blue, 155 }  ,draw opacity=1 ]   (294.19,75.13) -- (353.24,81.82) ;
\draw [color={rgb, 255:red, 155; green, 155; blue, 155 }  ,draw opacity=1 ]   (328.06,49.95) -- (321.37,109) ;
\draw  [color={rgb, 255:red, 74; green, 144; blue, 226 }  ,draw opacity=1 ] (527.37,199.36) -- (475.82,191.75) -- (506.74,151.51) -- cycle ;
\draw [color={rgb, 255:red, 74; green, 144; blue, 226 }  ,draw opacity=1 ]   (527.37,199.36) -- (494.67,226.67) ;
\draw  [color={rgb, 255:red, 74; green, 144; blue, 226 }  ,draw opacity=1 ] (558.29,159.13) -- (506.74,151.51) -- (537.66,111.28) -- cycle ;
\draw [color={rgb, 255:red, 74; green, 144; blue, 226 }  ,draw opacity=1 ]   (558.29,159.13) -- (553.67,194.67) ;

\draw (281,20) node [anchor=north west][inner sep=0.75pt]   [align=left] {$\displaystyle \mathcal{T}_{1}$ (modified)};
\draw (441,18) node [anchor=north west][inner sep=0.75pt]   [align=left] {$\displaystyle \mathcal{T}_{2}$ (modified)};
\draw (84,18) node [anchor=north west][inner sep=0.75pt]   [align=left] {$\displaystyle \mathcal{T} \ $(original)};

\end{tikzpicture}
\caption{An example of branch shifting.}
\label{branchshiftingfig}
\end{figure}
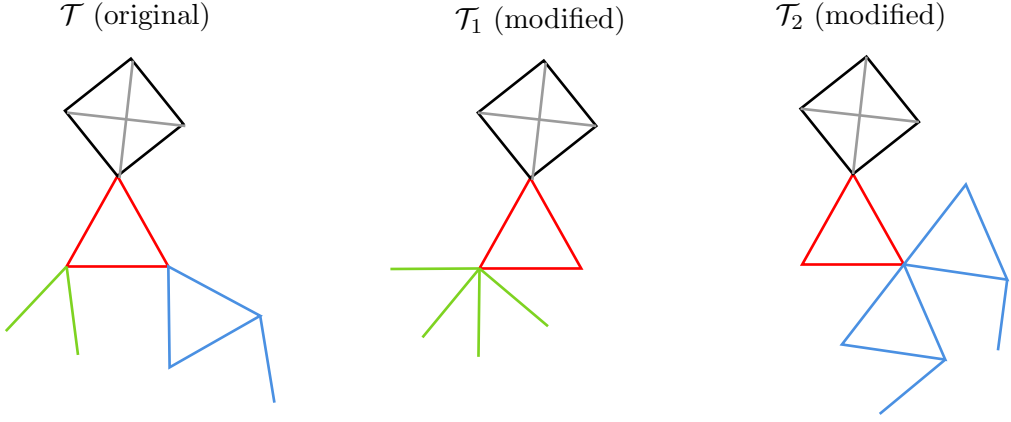

\begin{proof}
First, suppose that $S_1$ and $S_2$ are distinct from each other and from the root simplex, with parent vertices $v_{0, 1}$ and $v_{0, 2}$ respectively.  Then for $\ell \in \{ 1, 2 \}$, the expression $R_{(\TT, S_0)}$ contains the term

\begin{align}
    D_{\mathcal{P}(S_\ell)}(u, u')\ =\ \frac{1}{\Stab(\dim(S_\ell))} \sum_{\theta \in \oq{d}} \, \prod_{\substack{ v \in V(S_\ell)  \\ v \neq v_{0, \ell}}} \, \, \sum_{\substack{x, x' \in E \\ x - \theta x' \\ = u - \theta u'}} \, \, \prod_{B \in \mathcal{B}(S_\ell, v)}D_{B}(x, x').
\end{align}
Now consider the modified expression 

\begin{equation}
    D_{\mathcal{P}(S_\ell)}(u, u', a_{\ell})\ =\ \frac{1}{\Stab(\dim(S_\ell))} \sum_{\theta \in \oq{d}} \, \prod_{\substack{ v \in V(S_\ell)  \\ v \neq v_{0, \ell}}} \, \, \sum_{\substack{x, x' \in E \\ x - \theta x' \\ = u - \theta u'}} \, \, \left( \prod_{B \in \mathcal{B}(S_\ell, v)}D_{B}(x, x') 
    \right)^{G_\ell(v)}
\end{equation}

where $G_\ell: V(S_\ell) \to \mathbb{R}$ is defined as
\begin{equation} 
    G_\ell(v)\ =\ \begin{cases}
        1 & v \neq v_\ell \\
        a_\ell & v = v_\ell. 
    \end{cases}
    \label{Gdef}
\end{equation}

We now consider the function $\psi(a_1, a_2)$ defined as the expression for $R_{(\TT, S_0)}$ where we substitute all instances of $ D_{\mathcal{P}(S_\ell)}(u, u')$ with $D_{\mathcal{P}(S_\ell)}(u, u', a_{\ell})$.  Therefore, we see that \begin{equation}
    R_{(\TT, S_0)}\ =\ \psi(1, 1).
\end{equation}
It is easy to check that this $\psi$ satisfies the requirements for the special case of Hadamard three-lines in Corollary \ref{repeatedhadamard} (by an iterated use of the triangle inequality), so applying this gives us that 

\begin{equation}
    \psi(1, 1)\ \leq\ \psi(2, 0)^{\frac{1}{2}} \psi(0, 2)^{\frac{1}{2}}
\end{equation}
Note that the terms $\psi(2, 0)$ and $\psi(0, 2)$ are equal to $R_{(\TT_1, S_0)}$ and $R_{(\TT_2, S_0)}$ respectively, where $\TT_1$ and $\TT_2$ are as defined in the Lemma statement.  We also easily see that $2 c(\TT) = c(\TT_1) + c(\TT_2)$ and that $2 |V(\TT)| = |V(\TT_1)|+|V(\TT_2)|$.  Since for any $s > \max(\ppc(\TT_1),\ppc(\TT_2)) $, we have that $|E| \gtrsim q^s$ implies \begin{equation}
    \psi(2, 0) \psi(0, 2) \ \lesssim \ \eoq{2|V(\TT_1)| + 2|V(\TT_2)|}{c(\TT_1) + c(\TT_2)},
\end{equation}
we conclude that
\begin{equation}
    \psi(1, 1)\ \lesssim\ \eoq{2|V(\TT)|}{c(\TT)}.
\end{equation}

In the case where $S_1$ and $S_2$ are the same simplex and are still distinct from the root node, we denote both of them as $S$ for simplicity.  We denote the parent vertex of $S$ as $v_0$, and the two distinguished child vertices as $v_1$ and $v_2$.  Then the expression $R_{(\TT, S_0)}$ contains the term

\begin{align}
    D_{\mathcal{P}(S)}(u, u')\ =\ \frac{1}{\Stab(\dim(S))} \sum_{\theta \in \oq{d}} \, \prod_{\substack{ v \in V(S)  \\ v \neq v_{0}}} \, \, \sum_{\substack{x, x' \in E \\ x - \theta x' \\ = u - \theta u'}} \, \, \prod_{B \in \mathcal{B}(S, v)}D_{B}(x, x').
\end{align}
Now consider the modified expression 

\begin{align}
    &D_{\mathcal{P}(S)}(u, u', a_1, a_2) \nonumber \\   =\ &\frac{1}{\Stab(\dim(S))} \sum_{\theta \in \oq{d}} \, \prod_{\substack{ v \in V(S)  \\ v \neq v_{0}}} \, \, \sum_{\substack{x, x' \in E \\ x - \theta x' \\ = u - \theta u'}} \, \, \left( \prod_{B \in \mathcal{B}(S, v)}D_{B}(x, x') 
    \right)^{G_1(v) + G_2(v) - 1}
\end{align}
where $G_1$ and $G_2$ are defined as in \eqref{Gdef} above.  We define $\psi(a_1, a_2)$ as the expression for $R_{(\TT, S_0)}$ where we substitute each instance of $D_{\mathcal{P}(S)}(u, u')$ with $D_{\mathcal{P}(S)}(u, u', a_1, a_2)$.  We note that $R_{(\TT, S_0)} = \psi(1, 1)$.  Applying Hadamard three-lines and arguing as in the previous case completes the proof of the Lemma in this case. \bigskip

The cases where one or more of $S_1, S_2$ is the root of $\TT$ follow in an extremely similar fashion.  Suppose that $S_1$ is the root of $\TT$, then $R_{(\TT, S_0)}$ is of the form

\begin{equation}
     R_{(\TT, S_0)}\ =\ \sum_{w \in \mathbb{F}_q^d} \, \sum_{\theta \in \oq{d}} \prod_{\substack{v \in V(S_1)}} \sum_{\substack{x, x' \in E \\ x - \theta x' = w}} \prod_{B \in \mathcal{B}(S_1, v)}D_{B}(x, x'). 
\end{equation}
Define $\psi(a_1, a_2)$ as the modified expression
\begin{equation}
    \sum_{w \in \mathbb{F}_q^d}  \,\sum_{\theta \in \oq{d}} \,\prod_{\substack{v \in V(S_1)}} \, \sum_{\substack{x, x' \in E \\ x - \theta x' = w}} \left( \prod_{B \in \mathcal{B}(S_1, v)}D_{B}(x, x') 
    \right)^{G_1(v)}
\end{equation}
where we substitute each instance of $D_{\mathcal{P}(S_2)}(u, u')$ with $D_{\mathcal{P}(S_2)}(u, u', a_\ell)$.  From here, we apply Hadamard three-lines to $\psi(1, 1)$ and argue as in the previous cases.  Finally, in the case where $S_1 = S_2 = S_0$, we define $\psi(a_1, a_2)$ as the modified expression for $R_{(\TT, S_0)}$ below.
\begin{equation}
    \sum_{w \in \mathbb{F}_q^d} \, \sum_{\theta \in \oq{d}} \, \prod_{\substack{v \in V(S_1)}} \, \sum_{\substack{x, x' \in E \\ x - \theta x' = w}}  \left( \prod_{B \in \mathcal{B}(S_1, v)}D_{B}(x, x') 
    \right)^{G_1(v) + G_2(v) - 1}
\end{equation}
Then applying Hadamard three-lines to $\psi(1, 1)$ finishes this case.
\end{proof}

We now proceed to our second geometric operation:

\begin{lemma}[Simplex Unbalancing] \label{simplexunbalancing} Given a rooted simplex tree $(\TT, S_0)$, consider two distinct simplices $S_1$ and $S_2$ (one of which may be the root), each of dimension at least 2 and each with at least one free vertex.  Choose positive integers $k_1, k_2$ such that $k_i$ does not surpass the number of free vertices of $S_i$.  Consider the rooted tree $(\TT_1, S_0)$ formed by deleting $k_2$ free vertices of $S_2$ and adding $k_2$ free vertices to $S_1$.  Similarly, consider the rooted tree $(\TT_2, S_0)$ formed by deleting $k_1$ free vertices of $S_1$ and adding $k_1$ free vertices to $S_2$. Then $\ppc(\TT) \leq \max(\ppc(\TT_1), \ppc(\TT_2))$. \end{lemma}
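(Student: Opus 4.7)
The plan mirrors the Hadamard three-lines argument used for Lemma \ref{branchshifting}: I will parameterize $R_{(\TT, S_0)}$ by a two-variable complex function $\psi$, apply Corollary \ref{repeatedhadamard}, and identify the endpoints with $R_{(\TT_1, S_0)}$ and $R_{(\TT_2, S_0)}$. The twist is that simplex unbalancing alters the dimensions of $S_1, S_2$, so the stabilizer factors inside $R$ require careful bookkeeping.

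Set $n_\ell = \dim(S_\ell)$ and let $f_\ell$ denote the number of free vertices of $S_\ell$ in $\TT$. In the expansion of $R_{(\TT, S_0)}$, each free vertex $v$ of $S_\ell$ contributes (via the innermost sum, with empty product over $\mathcal{B}(S_\ell, v)$) a factor $\lambda_{\theta_\ell}(u - \theta_\ell u')$, so the $f_\ell$ free vertices together produce a factor $\lambda_{\theta_\ell}^{f_\ell}(u - \theta_\ell u')$ inside the $\theta_\ell$-sum. I define $\psi(a_1, a_2)$ as the same expression but with this factor replaced by $\lambda_{\theta_\ell}^{f_\ell - k_\ell + a_\ell}(u - \theta_\ell u')$ for $\ell = 1, 2$, taking the principal branch when $\lambda_{\theta_\ell} > 0$ and setting the term to zero when $\lambda_{\theta_\ell} = 0$; the stabilizer prefactors $1/\Stab(n_\ell)$ stay fixed. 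Then $\psi(k_1, k_2) = R_{(\TT, S_0)}$ by construction. The function $\Psi(z) := \psi(k_1 + z, k_2 - z)$ is a finite sum of terms $C \cdot \lambda_{\theta_1}^{f_1 + z} \lambda_{\theta_2}^{f_2 - z}$ (with $C$ independent of $z$), each entire in $z$; since $|\lambda^{a + iy}| = \lambda^a$ for $\lambda > 0$, the triangle inequality gives $|\Psi(x + iy)| \leq \Psi(x)$, so Corollary \ref{repeatedhadamard} applies at $(a_1, a_2) = (k_1, k_2)$, yielding
\begin{equation*}
\psi(k_1, k_2)\ \leq\ \psi(k_1 + k_2, 0)^{s_1}\, \psi(0, k_1 + k_2)^{s_2}, \qquad s_1 = \tfrac{k_1}{k_1 + k_2},\ s_2 = \tfrac{k_2}{k_1 + k_2}.
\end{equation*}

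Because $\psi(k_1 + k_2, 0)$ uses the $\lambda_{\theta_\ell}$ exponents of $\TT_1$ but retains the original-dimension stabilizers, it equals $R_{(\TT_1, S_0)} \cdot \Stab(n_1 + k_2)\Stab(n_2 - k_2) / [\Stab(n_1)\Stab(n_2)]$, and analogously for $\psi(0, k_1 + k_2)$ with $\TT_2$. Unbalancing preserves total vertex counts, so $|V(\TT)| = |V(\TT_1)| = |V(\TT_2)|$. Invoking the hypothesis $R_{(\TT_\ell, S_0)} \lesssim |E|^{2|V(\TT_\ell)|} q^{-c(\TT_\ell)}$ at the threshold $|E| \gtrsim q^{\max(\ppc(\TT_1, S_0), \ppc(\TT_2, S_0))}$ and substituting, the $q$-exponent of the resulting bound on $R_{(\TT, S_0)}$ equals $-s_1 c(\TT_1) - s_2 c(\TT_2) + s_1 \sigma(\TT_1) + s_2 \sigma(\TT_2) - \sigma(\TT)$, where $\sigma(\TT) := \sum_{S \in \TT} \log_q \Stab(\dim S)$.

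The decisive observation, and the step that would fail for an arbitrary pairing of $c$ with $\Stab$, is that $f(n) := c(n) - \log_q \Stab(n)$ is affine in $n$: a direct computation from Lemma \ref{numberofconclassnsimp} and Proposition \ref{stabsizelemma} gives $f(n) = dn - \binom{d}{2}$ for all $n \geq 0$. Since unbalancing preserves $\sum_{S \in \TT} \dim(S)$, it therefore preserves $\sum_{S \in \TT} f(\dim S) = c(\TT) - \sigma(\TT)$, so $s_1[c(\TT_1) - \sigma(\TT_1)] + s_2[c(\TT_2) - \sigma(\TT_2)] = c(\TT) - \sigma(\TT)$ exactly. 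The $q$-exponent above then collapses to precisely $-c(\TT)$, yielding $R_{(\TT, S_0)} \lesssim |E|^{2|V(\TT)|} q^{-c(\TT)}$ at the stated threshold and proving $\ppc(\TT, S_0) \leq \max(\ppc(\TT_1, S_0), \ppc(\TT_2, S_0))$. The main obstacle is the stabilizer bookkeeping, which does not arise in Lemma \ref{branchshifting} because branch shifting preserves simplex dimensions; here it is only the linearity of $c - \log_q \Stab$ that makes the Jensen gap vanish and the Hadamard estimate come out sharp.
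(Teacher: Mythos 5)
Your proof is correct and follows the same overall route as the paper's: parameterize $R_{(\TT,S_0)}$ by a two-variable function $\psi$ obtained by replacing the $\lambda_{\theta_\ell}^{|F_\ell|}$ factors with $\lambda_{\theta_\ell}^{|F_\ell|-k_\ell+a_\ell}$, apply Corollary~\ref{repeatedhadamard} at $(a_1,a_2)=(k_1,k_2)$, identify the endpoint values $\psi(k_1+k_2,0)$ and $\psi(0,k_1+k_2)$ with $R_{(\TT_1,S_0)}$ and $R_{(\TT_2,S_0)}$ up to a stabilizer ratio, and show that ratio exactly offsets the change from $q^{-c(\TT_i)}$ to $q^{-c(\TT)}$.

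The one place you diverge is in \emph{how} you establish that offset. The paper states it as a standalone identity, Lemma~\ref{step0lemma}, proved by a three-way case analysis on whether each of $\dim(S_1),\dim(S_2)$ is above or below $d$, and then iterates. You instead observe that $c(n)-\log_q\Stab(n)=dn-\binom{d}{2}$ is affine in $n$ (checking both regimes of Lemma~\ref{numberofconclassnsimp} and Proposition~\ref{stabsizelemma}), so the quantity $c(\TT)-\sum_S\log_q\Stab(\dim S)$ is invariant under any dimension-preserving redistribution, and the exponent telescopes with no case split. This is a genuinely cleaner formulation of the same cancellation and makes transparent why the Hadamard estimate comes out sharp: the gap vanishes precisely because $c-\log_q\Stab$ is affine. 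Two small points you elide but should be aware of: the paper separately treats the case where one of $S_1,S_2$ is the root (the outer $w$-sum changes the form of $R$, but your $\psi$-construction adapts with no real change), and, as in the paper, the convention $\lambda^z=0$ when $\lambda=0$ can only make the endpoint values $\psi(k_1+k_2,0)$ and $\psi(0,k_1+k_2)$ smaller than the true $R_{(\TT_i,S_0)}$-expressions when an exponent hits zero, so the final inequality is unaffected.
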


Similar to branch shifting, the operation of simplex unbalancing preserves the quantity $N_k$ as defined in \ref{thebigguy}.  Therefore, this lemma allows us to rephrase proving Proposition \ref{thebigguy} for $\TT$ into proving Proposition \ref{thebigguy} for $\TT_1$ and $\TT_2$.  An example of simplex unbalancing is shown in Figure \ref{simplexunbalancingfig}, where we take $S_0$ to be the black triangle, $S_1$ to be the green triangle, and $S_2$ to be the blue triangle.

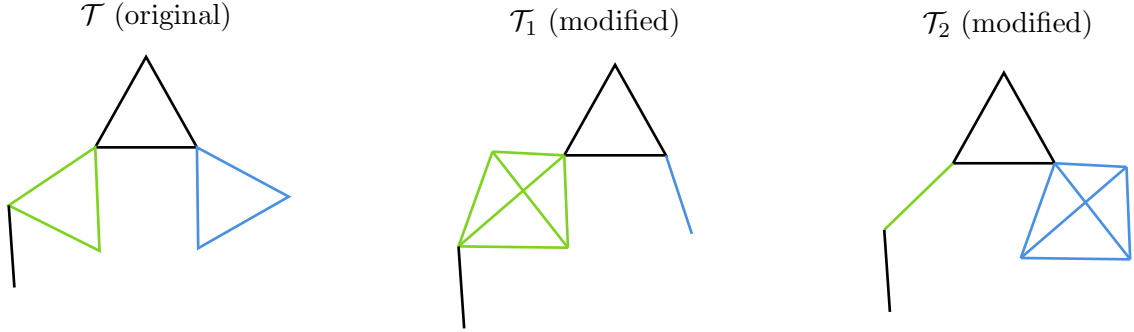
\begin{figure}[hbt!]
\centering

\tikzset{every picture/.style={line width=1pt}} 

\begin{tikzpicture}[x=0.75pt,y=0.75pt,yscale=-1,xscale=1]

\draw  [color={rgb, 255:red, 0; green, 0; blue, 0 }  ,draw opacity=1 ] (119.37,43) -- (144.74,88.51) -- (94,88.51) -- cycle ;
\draw  [color={rgb, 255:red, 74; green, 144; blue, 226 }  ,draw opacity=1 ] (190.57,113.31) -- (145.38,139.25) -- (144.74,88.51) -- cycle ;
\draw  [color={rgb, 255:red, 126; green, 211; blue, 33 }  ,draw opacity=1 ] (94,88.51) -- (96.14,140.57) -- (50.84,117.71) -- cycle ;
\draw    (50.84,117.71) -- (53.67,159) ;
\draw  [color={rgb, 255:red, 0; green, 0; blue, 0 }  ,draw opacity=1 ] (353.37,47) -- (378.74,92.51) -- (328,92.51) -- cycle ;
\draw [color={rgb, 255:red, 126; green, 211; blue, 33 }  ,draw opacity=0.5 ]   (292.21,90.67) -- (329.84,138.99) ;
\draw [color={rgb, 255:red, 74; green, 144; blue, 226 }  ,draw opacity=1 ]   (378.74,92.51) -- (391.67,132) ;
\draw [color={rgb, 255:red, 126; green, 211; blue, 33 }  ,draw opacity=1 ]   (292.21,90.67) -- (275.25,138.32) ;
\draw [color={rgb, 255:red, 126; green, 211; blue, 33 }  ,draw opacity=1 ]   (275.25,138.32) -- (329.84,138.99) ;
\draw [color={rgb, 255:red, 126; green, 211; blue, 33 }  ,draw opacity=1 ]   (329.84,138.99) -- (328,92.51) ;
\draw [color={rgb, 255:red, 126; green, 211; blue, 33 }  ,draw opacity=1 ]   (292.21,90.67) -- (328,92.51) ;
\draw [color={rgb, 255:red, 126; green, 211; blue, 33 }  ,draw opacity=0.5 ]   (275.25,138.32) -- (328,92.51) ;
\draw    (275.25,138.32) -- (278.08,179.6) ;
\draw  [color={rgb, 255:red, 0; green, 0; blue, 0 }  ,draw opacity=1 ] (547.37,51) -- (572.74,96.51) -- (522,96.51) -- cycle ;
\draw [color={rgb, 255:red, 126; green, 211; blue, 33 }  ,draw opacity=1 ]   (487.67,130) -- (522,96.51) ;
\draw    (487.67,130) -- (490.49,171.29) ;
\draw [color={rgb, 255:red, 74; green, 144; blue, 226 }  ,draw opacity=0.5 ]   (572.74,96.51) -- (610.38,144.84) ;
\draw [color={rgb, 255:red, 74; green, 144; blue, 226 }  ,draw opacity=1 ]   (572.74,96.51) -- (555.79,144.16) ;
\draw [color={rgb, 255:red, 74; green, 144; blue, 226 }  ,draw opacity=1 ]   (555.79,144.16) -- (610.38,144.84) ;
\draw [color={rgb, 255:red, 74; green, 144; blue, 226 }  ,draw opacity=1 ]   (572.74,96.51) -- (608.54,98.36) ;
\draw [color={rgb, 255:red, 74; green, 144; blue, 226 }  ,draw opacity=0.5 ]   (555.79,144.16) -- (608.54,98.36) ;
\draw [color={rgb, 255:red, 74; green, 144; blue, 226 }  ,draw opacity=1 ]   (608.54,98.36) -- (610.38,144.84) ;

\draw (85,14) node [anchor=north west][inner sep=0.75pt]   [align=left] {$\displaystyle \mathcal{T} \ $(original)};
\draw (299,17) node [anchor=north west][inner sep=0.75pt]   [align=left] {$\displaystyle \mathcal{T}_{1}$ (modified)};
\draw (505,18) node [anchor=north west][inner sep=0.75pt]   [align=left] {$\displaystyle \mathcal{T}_{2}$ (modified)};

\end{tikzpicture}

\caption{An example of simplex unbalancing.}
\label{simplexunbalancingfig}
\end{figure}

To prove simplex unbalancing, we need the following Lemma:

\begin{lemma} \label{step0lemma}
    Given a rooted simplex tree $(\TT, S_0)$ in $\mathbb{F}_q^d$, consider two distinct simplices $S_1$ and $S_2$, each of dimension at least 2 and with at least one free vertex.  Consider the rooted tree $(\TT', S_0)$ formed by deleting one free vertex of $S_1$ and adding one free vertex to $S_2$.  Then we have that
    \begin{align}
        \left( \frac{\Stab(\dim(S_1))}{\Stab(\dim(S_1'))} \cdot \frac{\Stab(\dim(S_2))}{\Stab(\dim(S_2'))} \right) q^{c(\TT ' )}\ =\ q^{c(\TT)}. \label{step0}
    \end{align} \bigskip
\end{lemma}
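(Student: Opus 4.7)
The plan is to reduce the claimed identity to a one-variable algebraic statement about the functions $n \mapsto c_d(n)$ (the common value of $c_d(S)$ on any $n$-simplex) and $n \mapsto \Stab(n)$, and then verify it by a short case analysis from the explicit formulas.

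Since $\TT$ and $\TT'$ agree on every simplex except the pair $(S_1, S_2)$ versus $(S_1', S_2')$, Lemma \ref{numembeddingsofsimplextrees} immediately gives
\begin{equation*}
c(\TT) - c(\TT')\ =\ [c(S_1) - c(S_1')] + [c(S_2) - c(S_2')].
\end{equation*}
Writing $n_i := \dim(S_i)$ so that $\dim(S_1') = n_1 - 1$ and $\dim(S_2') = n_2 + 1$, and defining $\psi(n) := c_d(n) - \log_q \Stab(n)$, a short rearrangement shows that identity \eqref{step0} is equivalent to
\begin{equation*}
\psi(n_1) - \psi(n_1 - 1)\ =\ \psi(n_2 + 1) - \psi(n_2).
\end{equation*}
It therefore suffices to prove that the finite difference $\psi(n) - \psi(n-1)$ is a single constant independent of $n$.

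To establish this, I would split into the three natural regimes dictated by Proposition \ref{stabsizelemma} and Lemma \ref{numberofconclassnsimp}. For $n \leq d-1$, a direct computation gives $c_d(n) - c_d(n-1) = n$ and $\log_q \Stab(n) - \log_q \Stab(n-1) = -(d-n)$, so $\psi(n) - \psi(n-1) = d$. For $n \geq d+1$, the piecewise formula for $c_d$ is linear with slope $d$ and $\Stab$ is trivial, giving $\psi(n) - \psi(n-1) = d$ again. The only genuine care point is the transition $n = d$, where both piecewise formulas switch branches simultaneously (the stabilizer formula already transitioned at $n = d-1$, while the congruence-class formula transitions at $n = d$); a brief separate check confirms the common value $d$ there as well.

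The main obstacle is simply the bookkeeping at these boundary values, but it presents no real difficulty. A useful conceptual sanity check is the orbit–stabilizer theorem: the number of based $n$-simplices in $\fqd$ is both $\approx q^{dn}$ and $\approx q^{c_d(n)} \cdot |O_d(\fq)|/\Stab(n) \approx q^{c_d(n) + \binom{d}{2} - \log_q \Stab(n)}$, which forces $\psi(n) = dn - \binom{d}{2}$ exactly and makes the constancy of $\psi(n) - \psi(n-1) = d$ uniform across the piecewise transitions.
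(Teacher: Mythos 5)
Your proof is correct and rests on the same underlying computation as the paper's, which also writes $c(\TT)-c(\TT') = [c(S_1)-c(S_1')]+[c(S_2)-c(S_2')]$ via Lemma~\ref{numembeddingsofsimplextrees} and then verifies the identity case-by-case in the three regimes $n,m\ge d$; $n,m<d$; and the mixed case. What you do differently is bundle the two quantities into the single function $\psi(n)=c_d(n)-\log_q\Stab(n)$ and reduce the lemma to the constancy of the first difference $\psi(n)-\psi(n-1)$. That is a cleaner way to organize the same case analysis, but the genuinely different (and more valuable) contribution is your orbit--stabilizer observation: counting based $n$-simplices two ways forces $\psi(n)=dn-\binom{d}{2}$ exactly for all $n$, which proves the constancy globally and makes the piecewise case-checking and the bookkeeping at $n\in\{d-1,d\}$ unnecessary. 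That remark deserves to be the leading argument rather than a ``sanity check''; the explicit formulas from Lemma~\ref{numberofconclassnsimp} and Proposition~\ref{stabsizelemma} then serve only to confirm that the two counts are what orbit--stabilizer predicts. (Incidentally, the paper's mixed case has a small typo, writing $\binom{n}{2}-\binom{n-1}{2}$ where $\binom{n+1}{2}-\binom{n}{2}$ is meant, yielding $n-1$ rather than $n$; your uniform closed form for $\psi$ is exactly the kind of check that would catch such an error.)
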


\begin{proof}
By Lemma \ref{numembeddingsofsimplextrees}, we see that $c(\TT) - c(\TT') = c(S_1) - c(S_1') + c(S_2) - c(S_2')$.  Let $\dim(S_1) = n$ and $\dim(S_2) = m$, so that $\dim(S_1') = n-1$ and $\dim(S_2') = m+1$.  We split into cases based on the dimension of simplices $S_1$ and $S_2$.  
\begin{itemize}
    \item If $n, m \geq d$, then by Lemma \ref{numberofconclassnsimp}, we see that $c(S)$ changes linearly by $d$ as the dimension of both simplices change by 1.  Therefore, $c(S_1) - c(S_1') + c(S_2) - c(S_2') = d + (-d) = 0$.  The sizes of the stabilizers are trivial in this case, so we have the equality in \eqref{step0}.
    
    \item If $n, m < d$, then by Lemma \ref{numberofconclassnsimp} we have that
    \begin{align}
        &c(S_1) - c(S_1') + c(S_2) - c(S_2') \nonumber \\ &=\ {\binom{n+1}  {2}} - {\binom{n} {2}} + {\binom{m+1}  {2}} - {\binom{m+2}  {2}}\ =\ n-m - 1.
    \end{align}
    Similarly, by Proposition \ref{stabsizelemma}, we see that
    \begin{equation}
        \frac{\Stab(n)}{\Stab(n-1)} \cdot \frac{\Stab(m)}{\Stab(m+1)}\ =\ q^{{\binom{d-n}{2}} - {\binom{d-n+1}{2}} + {\binom{d-m}{2}} - {\binom{d-m-1}  {2}}}\ =\ q^{n-m-1}.
    \end{equation}
    So again, we get the equality in \eqref{step0}.

    \item If $n < d, m \geq d$, then again by Lemma \ref{numberofconclassnsimp} we have that
    \begin{align}
        &c(S_1) - c(S_1') + c(S_2) - c(S_2') \nonumber \\ &=\ {\binom{n}  {2}} - {\binom{n-1} {2}} + d(m+1) - d(m+2)\ =\ n-d.
    \end{align}
    The stabilizers $\Stab(m)$ and $\Stab(m+1)$ are trivial, and we have by Proposition \ref{stabsizelemma} that
    \begin{equation}
        \frac{\Stab(n)}{\Stab(n-1)}\ =\ q^{{\binom{d-n}{2}} - {\binom{d-n+1}{2}}}\ =\ q^{n-d}.
    \end{equation}
    Thus, we have the equality on \eqref{step0}.  The case where $n \geq d, m < d$ follows similarly.
\end{itemize}
\end{proof}

\begin{proof}[Proof of Simplex Unbalancing]
For now, assume that $S_1$ and $S_2$ are not the root of $\TT$, and denote the parent vertices of $S_1$ and $S_2$ as $v_{0, 1}$ and $v_{0, 2}$ respectively.  Denote the set of free vertices of $S_1$ as $F_1$, and the set of free vertices of $S_2$ as $F_2$.  For $\ell \in \{1, 2\}$ the expression $R_{(\TT, S_0)}$ contains a term of the form

\begin{equation}
    D_{\mathcal{P}(S_\ell)}(u, u')\ =\ \frac{1}{\Stab(\dim(S_\ell))}\sum_{\theta \in \oq{d}}\lambda_{\theta}^{|F_\ell|}(u - \theta u') \prod_{\substack{v \in V(S_\ell) \\ v \neq v_{0, \ell} \\ v \notin F_\ell}}\sum_{\substack{x, x' \in E \\ x - \theta x' \\ = u - \theta u'}} \, \, \prod_{B \in \mathcal{B}(S_\ell, v)} D_{B}(x, x'). 
\end{equation}

Now consider the modified expression

\begin{align}
    &D_{\mathcal{P}(S_\ell)}(u, u', a_\ell)\ =& \notag \\ 
    &\hspace{0.5cm}\frac{1}{\Stab(\dim(S_\ell))}\sum_{\theta \in \oq{d}}\lambda_{\theta}^{a_\ell + (|F_\ell| - k_\ell)}(u - \theta u') \prod_{\substack{v \in V(S_\ell) \\ v \neq v_{0, \ell} \\ v \notin F_\ell}}\sum_{\substack{x, x' \in E \\ x - \theta x' \\ = u - \theta u'}} \, \, \prod_{B \in \mathcal{B}(S_\ell, v)}D_{B}(x, x') .
\end{align}

Consider the function $\psi(a_1, a_2)$, defined as the expression for $R_{(\TT, S_0)}$ where we substitute each instance of $D_{\mathcal{P}(S_{\ell})}(u, u')$ with $D_{\mathcal{P}(S_{\ell})}(u, u', a_\ell)$.  Therefore, we see that
\begin{equation}
R_{(\TT, S_0)}\ =\ \psi(k_1, k_2).
\end{equation}
It is clear that $\psi$ satisfies the conditions needed for the special case of Hadamard three-lines as stated in Corollary \ref{repeatedhadamard} (again by iterated triangle inequality), so applying this  gives us that

\begin{equation}
    \psi(k_1, k_2)\ \leq\ \psi(k_1 + k_2, 0)^{s_1}  \psi(0, k_1 + k_2)^{s_2}
\end{equation} for some positive $s_i$ such that $s_1 + s_2 = 1$.
Note that the only difference between $\psi(k_1 + k_2, 0)$ and $R_{(\TT_1, S_0)}$ is the stabilizer constant in front of $\psi(k_1 + k_2, 0)$'s modified $D_{\mathcal{P}(S_1)}$ and $D_{\mathcal{P}(S_2)}$ terms. This is because in the process of applying Hadamard three-lines above, we changed the dimension of $S_1$ and $S_2$ by changing the power on the $\lambda_{\theta}$ terms, but didn't change the stabilizer terms accordingly.  Therefore, we have that

\begin{equation}
    \psi(k_1 + k_2, 0)\ =\ \left( \frac{\Stab(\dim(S_1)+k_2)}{\Stab(\dim(S_1))} \cdot \frac{\Stab(\dim(S_2)-k_2)}{\Stab(\dim(S_2))} \right) R_{(\TT_1, S_0)}.
\end{equation}
By a repeated application of Lemma \ref{step0lemma}, we have that
\begin{equation}
    \left( \frac{\Stab(\dim(S_1)+k_2)}{\Stab(\dim(S_1))} \cdot \frac{\Stab(\dim(S_2)-k_2)}{\Stab(\dim(S_2))} \right) \frac{1}{q^{c(\TT_1)}}\ =\ \frac{1}{q^{c(\TT)}}.
\end{equation}
After noting that $|V(\TT_1)| = |V(\TT)|$, we see that for $s > \ppc(\TT_1, S_0)$,
\begin{align}
    \psi(k_1 + k_2, 0)\ &\lesssim\ \left( \frac{\Stab(\dim(S_1)+k_2)}{\Stab(\dim(S_1))} \cdot \frac{\Stab(\dim(S_2)-k_2)}{\Stab(\dim(S_2))} \right) \left(\eoq{2|V(\TT_1)|}{c(\TT_1)}\right) \\ 
    &\lesssim\ \eoq{2|V(\TT)|}{c(\TT)}.
\end{align} 
By an analogous argument, we also have that for $s > \ppc(\TT_2, S_0)$, 
\begin{align}
    \psi(0, k_1 + k_2)\ &\lesssim \ \left( \frac{\Stab(\dim(S_1)-k_1)}{\Stab(\dim(S_1))} \cdot \frac{\Stab(\dim(S_2)+k_1)}{\Stab(\dim(S_2))} \right) \left(\eoq{2|V(\TT_2)|}{c(\TT_2)}\right) \\ 
    &\lesssim\ \eoq{2|V(\TT)|}{c(\TT)}.
\end{align} 

Therefore, whenever $s > \max(\ppc(\TT_1),\ppc(\TT_2)) $, we may conclude that $R_{(\TT, S_0)} = \psi(k_1, k_2) \lesssim E^{2|V(\TT)|}q^{-c(\TT)}$. \bigskip

In the case that $S_1$ or $S_2$ is the root of $(\TT, S_0)$, the proof follows similarly.  Without loss of generality, let $S_1$ be the root of $\TT$.  Again, let $F_1$ and $F_2$ be the set of free vertices of $S_1$ and $S_2$ respectively.  The term $R_{(\TT, S_0)}$ is of the form

\begin{equation}
     R_{(\TT, S_0)}\ =\ \sum_{w \in \mathbb{F}_q^d} \, \sum_{\theta \in \oq{d}} \lambda_{\theta}^{|F_1|}(w) \prod_{\substack{v \in V(S_1) \\ v \notin F_1}} \, \sum_{\substack{x, x' \in E \\ x - \theta x' \\ = w}} \, \prod_{B \in \mathcal{B}(S_1, v)}D_{B}(x, x') 
\end{equation}
Define $\psi(a_1, a_2)$ as the modified expression
\begin{equation}
    \sum_{w \in \mathbb{F}_q^d} \, \sum_{\theta \in \oq{d}} \lambda_{\theta}^{a_1 + |F_1| - k_1}(w) \prod_{\substack{v \in V(S_1) \\ v \notin F_1}} \sum_{\substack{x, x' \in E \\ x - \theta x' \\ = w}} \prod_{B \in \mathcal{B}(S_1, v)}D_{B}(x, x') ,
\end{equation}
where we substitute each instance of $D_{\mathcal{P}(S_2)}(u, u')$ with $D_{\mathcal{P}(S_2)}(u, u', a_\ell)$ as we did in the previous case.  Applying Hadamard three-lines and arguing as in the previous case completes the proof of the Lemma.
\end{proof}

\subsection{Proof of Proposition \ref{thebigguy} and Theorem \ref{extendsmall2023}}


We have the following simple fact that we use throughout this subsection.
\begin{lemma}\label{edgeaddition2}
    Consider a graph $G$ and a subgraph $H$ of $G$.  If for some $s \in \mathbb{R}$, $E \subset \mathbb{F}_q^d$ contains a positive proportion of congruence classes of embeddings of $G$ in $\mathbb{F}_q^d$ whenever $|E| \gtrsim q^s$, then $E$ contains a positive proportion of congruence classes of embeddings of $H$ in $\mathbb{F}_q^d$ whenever $|E| \gtrsim q^s$.
\end{lemma}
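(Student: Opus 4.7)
The plan is to use a natural restriction map on congruence classes. Any embedding $p: V(G) \to \mathbb{F}_q^d$ of $G$ restricts to an embedding $p|_{V(H)}: V(H) \to \mathbb{F}_q^d$ of $H$, and since $E(H) \subseteq E(G)$, two congruent $G$-embeddings must have matching distances along every edge of $G$, hence along every edge of $H$, so their restrictions remain congruent as embeddings of $H$. Restriction therefore descends to a well-defined map $\tau$ from congruence classes of $G$ in $\mathbb{F}_q^d$ to congruence classes of $H$ in $\mathbb{F}_q^d$. If $p$ has image contained in $E$, so does $p|_{V(H)}$, and so $\tau$ sends any $G$-class represented in $E$ to an $H$-class represented in $E$.

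The next step is to bound the fibers of $\tau$. Writing $N_G$ and $N_H$ for the total numbers of congruence classes of $G$ and $H$ in $\mathbb{F}_q^d$, I would show $|\tau^{-1}(\tau_0)| \lesssim N_G/N_H$ uniformly in the $H$-class $\tau_0$ by a direct orbit-counting argument. Concretely, after fixing an embedding of $H$ representing $\tau_0$, each lift to an embedding of $G$ is determined by placing the vertices of $V(G) \setminus V(H)$ into $\mathbb{F}_q^d$ subject to the distance constraints on the edges of $E(G) \setminus E(H)$; quotienting by the subgroup of rigid motions fixing the $H$-part and invoking the type of counting formula behind Lemma \ref{numberofconclassnsimp} shows that the number of such lifts is of the right order. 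Informally, the fiber records exactly the ``extra data'' by which a $G$-class refines its underlying $H$-class.

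Assembling the two pieces: assume $|E| \gtrsim q^s$. By hypothesis, the number of congruence classes of $G$ represented in $E$ is $\geqsim N_G$. Applying $\tau$ and dividing by the maximum fiber size yields at least $\geqsim N_G/(N_G/N_H) = N_H$ distinct congruence classes of $H$ represented in $E$, which is the desired positive proportion. The main (and only) obstacle is the fiber bound, but this is a routine combinatorial count based on the action of the orthogonal group rather than a genuinely difficult fact, consistent with the paper's framing of the lemma as a ``simple fact''.
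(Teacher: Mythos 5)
Your proposal is correct and follows the same route as the paper's (very brief) proof: the paper's two sentences amount to observing that restriction gives a well-defined map $\tau$ from congruence classes of $G$ to congruence classes of $H$, and then asserting by a counting argument that a positive-proportion preimage forces a positive-proportion image. Where you go further is in naming the precise mechanism — the uniform fiber bound $|\tau^{-1}(\tau_0)| \lesssim N_G/N_H$ — that makes the counting work; the paper simply writes ``it is clear that'' at exactly the point where this bound is needed. Your sketch of the fiber bound (lifting an $H$-embedding to a $G$-embedding by placing the remaining vertices and quotienting by the stabilizer of the $H$-part, in the spirit of Lemma~\ref{numberofconclassnsimp}) is the right way to justify it for the simplex trees and edge-augmented simplex trees on which the paper actually applies the lemma, since for those the congruence class factors over simplices and the extension count is governed by the same $c_d(S)$ arithmetic used throughout. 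So the two arguments are the same in substance; yours is the more complete write-up of what the paper leaves implicit, and neither fully proves the fiber bound, which is reasonable given that the paper explicitly frames the lemma as a ``simple fact.''
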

\begin{proof}
    Note that a congruence class of $G$ determines a congruence class of $H$.  As long as the number of congruence classes of $H$ in $\FF_q^d$ is unbounded as $q$ grows, it is clear that $E$ cannot contain a positive proportion of congruence classes of embeddings of $G$ without containing a positive proportion of congruence classes of embeddings of $H$. \bigskip
\end{proof}

\begin{proof}[Proof of Theorem \ref{extendsmall2023}]
We may assume that all simplices in $\TT$ are exactly dimension $k$ by Lemma \ref{edgeaddition2}.  We first choose a root $S_0$ of $\TT$.  Then, to any simplex $S \in \TT$ such that $S$ has more than one child vertex, we apply branch shifting to two child vertices of $S$ to reduce to to a collection of modified $\TT_i$ where $S$ has one less child vertex.  Repeating this process for all $S \in \TT$ allows us to reduce to the case where $\TT$ is a simplex tree consisting only of $n$-simplices with at most one child vertex.  Simplex trees of this form are $k$-weak simplex trees, and we apply Theorem 1.12 of \cite{small2023} to conclude that for $s = k + \frac{d-1}{2}$, $|E| \gtrsim q^s$ implies that $E$ contains all congruence classes of $\TT$ in $\fqd$. \bigskip
\end{proof}

We first prove a special case of Proposition \ref{thebigguy} for a simple class of simplex trees $\TT$ before continuing to the proof of \ref{thebigguy} in full generality.

\begin{lemma}\label{simplebigguy1}
    For an integer $k$ such that $1 \leq k < \frac{d+1}{2}$, let $\TT$ be a simplex tree consisting of exactly one simplex $S$ of dimension greater than $k$.  For $s = \max \left(\frac{d\dim(S) + 1}{\dim(S)+1}, \, \, k + \frac{d-1}{2} \right) $, we have that for $E \subset \fqd$, whenever $|E| \gtrsim q^s$,
    \begin{equation}
         \stsum \ \lesssim \ \eoq{2|V(\TT)|}{c(\TT)}.
    \end{equation}
    If we additionally impose the condition that $d=2$, $k=1$ and that $q \equiv 3 \pmod 4$, for $s = \frac{4\dim(S)}{2\dim(S) + 1}$ we have that whenever $|E| \gtrsim q^s$ the above holds as well.
\end{lemma}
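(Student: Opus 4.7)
\emph{Proposal.} Since $\mathcal{T}$ consists of the single simplex $S$ (with $S_0 = S$ as its root), the defining recursion for $R_{(\mathcal{T}, S_0)}$ collapses: there are no branches attached to any vertex of $S_0$, so the inner products $\prod_{B \in \mathcal{B}(S_0, v)} D_B(x,x')$ are empty products, each equal to $1$. Writing $n = \dim(S)$, and noting $|V(S_0)| = n+1$, I therefore have
\begin{equation}
R_{(\mathcal{T}, S_0)} \;=\; \frac{1}{\mathrm{Stab}(n)} \sum_{w \in \mathbb{F}_q^d} \sum_{\theta \in O_d(\mathbb{F}_q)} \lambda_\theta^{n+1}(w).
\end{equation}
So the entire statement reduces to a bound on a single power sum of $\lambda_\theta$, and the setup machinery of Section~4 plays no role here.

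For the general $d$-dimensional case, I apply Proposition \ref{lambdathetafactsd} with exponent $n+1$. This yields
\begin{equation}
\sum_{\theta, w} \lambda_\theta^{n+1}(w) \;\lesssim\; |E|^{2(n+1)} q^{-d(n+1) + \binom{d+1}{2}},
\end{equation}
valid whenever $|E| \gtrsim q^{(dn+1)/(n+1)}$, which is guaranteed by the hypothesis $|E| \gtrsim q^s$ with $s = \max(\frac{dn+1}{n+1}, k + \frac{d-1}{2})$. It then remains to check that dividing by $\mathrm{Stab}(n)$ produces the target $|E|^{2|V(\mathcal{T})|} q^{-c(\mathcal{T})}$. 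I split into cases on whether $n \geq d-1$ or $n < d-1$, using Proposition \ref{stabsizelemma} for $\mathrm{Stab}(n)$ and Lemma \ref{numberofconclassnsimp} for $c(S)$. The case $n \geq d-1$ is immediate since $\mathrm{Stab}(n) \approx 1$ and $c(S) = d(n+1) - \binom{d+1}{2}$. For $n < d-1$ the stabilizer is $q^{\binom{d-n}{2}}$ and $c(S) = \binom{n+1}{2}$, so the identity I need is
\begin{equation}
\binom{d+1}{2} - \binom{d-n}{2} + \binom{n+1}{2} \;=\; d(n+1),
\end{equation}
which I verify by direct expansion. This closes both cases uniformly.

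For the $\mathbb{F}_q^2$, $k = 1$, $q \equiv 3 \pmod 4$ case, the argument is identical except that Proposition \ref{lambdathetafacts2} replaces Proposition \ref{lambdathetafactsd}: it gives
\begin{equation}
\sum_{\theta, w} \lambda_\theta^{n+1}(w) \;\lesssim\; |E|^{2(n+1)} q^{-(2n-1)}
\end{equation}
whenever $|E| \gtrsim q^{4n/(2n+1)}$. Since $n = \dim(S) \geq 2 > d - 1 = 1$, we have $\mathrm{Stab}(n) \approx 1$ and $c(S) = 2(n+1) - \binom{3}{2} = 2n-1$, so the right-hand side equals $|E|^{2|V(\mathcal{T})|} q^{-c(\mathcal{T})}$ on the nose.

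There is no genuine obstacle here; the only subtlety is the bookkeeping between $\mathrm{Stab}(n)$ and $c(S)$ across the dimension regimes for the simplex $S$, which is handled by the binomial identity above. The lemma is really a sanity check that the framework reproduces the known single-simplex bounds of \cite{groupactions, alexmcdonald} as the base case for the inductive branch-shifting/unbalancing arguments in the main proof.
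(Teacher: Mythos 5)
Your proposal proves only a degenerate special case of the lemma, because it misreads the hypothesis. The phrase ``a simplex tree consisting of exactly one simplex $S$ of dimension greater than $k$'' means that $\TT$ contains exactly one simplex of dimension greater than $k$ --- it may (and in the intended application always does) also contain arbitrarily many simplices of dimension at most $k$ attached to $S$ in a tree. This is forced by how the lemma is used in the proof of Proposition \ref{thebigguy}: after branch shifting and simplex unbalancing one lands on trees $\TT_j$ with one large simplex and trees of $k$-simplices hanging off it, and it is to these that Lemma \ref{simplebigguy1} is applied. It is also visible in the threshold itself: the term $k + \tfrac{d-1}{2}$ in the max is the threshold for $k$-weak simplex trees from \cite{small2023}, and would be vacuous if $\TT$ were a bare simplex. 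The case you treat --- $S$ with no child vertices, so $R_{(\TT,S)} = \Stab(n)^{-1}\sum_{\theta,w}\lambda_\theta^{n+1}(w)$ --- is exactly the case the paper dismisses in one sentence by citing \cite{groupactions} and \cite{alexmcdonald}; your bookkeeping for that case (the binomial identity reconciling $\Stab(n)$ with $c(S)$ across the regimes $n \geq d-1$ and $n < d-1$, and the $d=2$ variant) is correct, but it is not the content of the lemma.

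What is missing is the entire mechanism for handling the attached branches. The paper first uses branch shifting (Lemma \ref{branchshifting}) to reduce to the case where $S$ has a single child vertex $v$ and each attached $k$-simplex has at most one child vertex, so that the union of branches at $v$ is a $k$-weak simplex tree $T_k$. The sum then takes the form
\begin{equation}
R_{(\TT,S)} \;=\; \frac{1}{\Stab(n)}\sum_{t_1,\ldots,t_{e(T_k)}}\sum_{\theta,w}\lambda_\theta^{\dim(S)}(w)\,\Gamma_{\theta,T_k}(w),
\end{equation}
and the heart of the proof is the bound on the mixed sum $\sum_{\theta,w}\lambda_\theta^{n}(w)\Gamma_{\theta,T_k}(w)$ supplied by Lemma \ref{hammerindd} (or Lemma \ref{hammerin2d} when $d=2$), which is where the inductive tree estimates and the group-action estimates actually interact. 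None of this appears in your argument, so the proposal as written does not establish the lemma.
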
 

\begin{proof}
We view $S$ as the root of $\TT$, and by Lemma \ref{edgeaddition2}, assume that all simplices of dimension at most $k$ are exactly dimension $k$.  If $S$ has no child vertices, we directly apply the bounds found in \cite{groupactions} and \cite{alexmcdonald} and no further work is needed.  If $S$ has at least one child vertex, we may reduce to the case where $S$ has exactly one child vertex.  To do this, we apply branch shifting to a pair of child vertices of $S$ to reduce to the case where $S$ has one less child vertex, and repeat this process.  Denote the sole child vertex of $S$ as $v$. \bigskip

Next, we modify the branches of $S$.  To any $k$-simplex with multiple child vertices, we repeatedly apply branch shifting to reduce to the case that each $k$-simplex only has a single child vertex.  After this modification, each branch of $S$ is of the form of a $k$-weak simplex tree as defined in Section 3.  Then we may write the sum $R_{(\TT, S)}$ as follows:

\begin{equation}
    R_{(\TT, S)}\ =\  \frac{1}{\Stab(\dim(S))} \sum_{w \in \mathbb{F}_q^d} \, \, \sum_{\theta \in \oq{d}} \, \,  \lambda_{\theta}^{\dim(S)}(w)  \, \, \sum_{\substack{x, x' \in E \\ x - \theta x' = w}} \, \, \prod_{B \in \mathcal{B}(S, v)}D_{B}(x, x').
\end{equation}
We note that $\prod_{B \in \mathcal{B}(S, v)}D_{B}(x, x')$ counts the number of congruent embeddings (based at $x$ and $x'$) of the structure formed by the union of all the branches of $S$ at $v$.  We note that the union of all of the branches of $S$ at $v$ is a $k$-weak simplex tree $T_k$ rooted at $v$.  Therefore, $\prod_{B \in \mathcal{B}(S, v)}D_{B}(x, x')$ counts the number of congruent pairs of embeddings of $T_k$ into $E$ based at $x$ and $x'$. \bigskip

Defining $e(T_k) = \ell\binom{k+1}{2}$ as the number of edges of $T_k$ as in Section 3, let $t_1, \ldots, t_{e(T)} \in \mathbb{F}_q$  determine a congruence class of embeddings of $T_k$.  Defining $f_{T_k, t_1, \ldots, t_{e(T_k)}}$ as in Section 3, we may write

\begin{align}
    &R_{(\TT, S)} \ = \ \frac{1}{\Stab(\dim(S))}\sum_{\theta, w} \lambda_{\theta}^{\dim(S)}(w) \sum_{\substack{x, x' \in E \\ x - \theta x' = w}} \, \ \prod_{B \in \mathcal{B}(S, v)}D_{B}(x, x') \nonumber \\ 
    &=\ \frac{1}{\Stab(\dim(S))}\sum_{t_1, \ldots, t_{e(T_k)}} \sum_{\theta, w} \lambda_{\theta}^{\dim(S)}(w) \sum_{\substack{x, x' \in E \\ x - \theta x' = w}} f_{T_k, t_1, \ldots, t_{e(T_k)}}(x) f_{T_k, t_1, \ldots, t_{e(T_k)}}(x'). \label{wherewesplit}
\end{align}

We also may assume that $t_1, \ldots, t_{e(T_k)}$ range over nonzero elements in $\mathbb{F}_q$ rather than all of $\mathbb{F}_q$.  This is because we only care about containing a positive proportion of congruence classes of $\TT$ in $\fqd$, and the number of congruence classes of $T_k$ determined by a set of $e(T_k)$ distances with at least one distance zero is negligible compared to the number of congruence classes determined by a set of $e(T_k)$ nonzero distances.  Fixing a congruence class $t_1, \ldots, t_{e(T_k)}$ of $T_k$, we note that the inside of the outermost sum of \eqref{wherewesplit} is exactly of the form $\sum_{\theta, w} \lambda_{\theta}^{\dim(S)} (w) \Gamma_{\theta, T_k}(w)$, where $\Gamma_{\theta, T_k}(w)$ is defined as in Section 3.  From this point onward, we split into cases for $\mathbb{F}_q^2$ and $\mathbb{F}_q^d$.  For notational convenience, we let $n$ denote the dimension of $S$ for the rest of this Lemma.  \bigskip

In $\mathbb{F}_q^2$, from an immediate application of Lemma \ref{hammerin2d} we have that for $|E| \geqsim q^{\frac{4n}{2n+1}}$,
\begin{align}
    &\frac{1}{\Stab(n)}\sum_{t_1, \ldots, t_{e(T_k)}} \sum_{\theta, w} \lambda_{\theta}^{n}(w) \sum_{\substack{x, x' \in E \\ x - \theta x' = w}} f_{T_k, t_1, \ldots, t_{e(T_k)}}(x) f_{T_k, t_1, \ldots, t_{e(T_k)}}(x') \nonumber \\
    &\lesssim \ \frac{1}{\Stab(n)}q^{\ell\binom{k+1}{2}} \left(\eoq{2\ell k+2n+2}{2\ell\binom{k+1}{2} + 2n-1}\right) \ = \ \frac{1}{\Stab(n)}\left(\eoq{2\ell k+2n+2}{\ell\binom{k+1}{2} + 2n-1}\right).
\end{align}

It is easy to show that $\TT$ has $\ell k + n + 1$ vertices.  We also see that by Lemma \ref{numembeddingsofsimplextrees}, $c(\TT) = c(S) + \ell\binom{k+1}{2}$.  In $d = 2$, we see that $\Stab(n) \approx 1$ and $c(S) = 2n-1$ by Lemma \ref{numberofconclassnsimp}.  Therefore, we have shown that for $|E| \geqsim q^{\frac{4n}{2n+1}}$,
\begin{equation}
    \sum_{d \in \mathbb{D}} \nu^2_{\TT}(d) \ \approx \ R_{(\TT, S)} \ \lesssim \ {\eoq{2|V(\TT)|}{c(\TT)}}.
\end{equation}

In $\mathbb{F}_q^d$, $d > 2$, we apply Lemma \ref{hammerindd} to \eqref{wherewesplit}, so for $|E| \gtrsim q^{\max(\frac{dn+1}{n+1}, \, k + \frac{d-1}{2})}$,

\begin{align}
    &\frac{1}{\Stab(n)}\sum_{t_1, \ldots, t_{e(T_k)}} \sum_{\theta, w} \lambda_{\theta}^{n}(w) \sum_{\substack{x, x' \in E \\ x - \theta x' = w}} f_{T_k, t_1, \ldots, t_{e(T_k)}}(x) f_{T_k, t_1, \ldots, t_{e(T_k)}}(x') \\
    &\lesssim\ \frac{1}{\Stab(n)}q^{\ell\binom{k+1}{2}} \left(\eoq{2\ell k + 2n +2}{2\ell\binom{k+1}{2} + d(n+1) - \binom{d+1}{2}}\right) \\ 
    &=\  \frac{1}{\Stab(n)}\left(\eoq{2\ell k + 2n +2}{\ell\binom{k+1}{2} + d(n+1) - \binom{d+1}{2}}\right).
\end{align}

Again we see that $\TT$ has $\ell k + n + 1$ vertices, and that $c(\TT) = c(S) + l\binom{k+1}{2}$.  When $\dim(S) = n \geq d-1$, we have that $\Stab(n) \approx 1$ and that $c(S) = d(n+1) - \binom{d+1}{2}$.  When $\dim(S) = n < d-1$, we have that $\Stab(n)$ is $q^{\binom{d-n}{2}}$ by Proposition \ref{stabsizelemma}.  As $\binom{d-n}{2} + d(n+1) - \binom{d+1}{2} = \binom{n+1}{2}$, we see that for $|E| \gtrsim q^{\max(\frac{dn+1}{n+1}, \, k + \frac{d-1}{2})}$,
\begin{equation}
    \sum_{d \in \mathbb{D}} \nu^2_{\TT}(d) \ \approx \ R_{(\TT, S)} \ \lesssim \ {\eoq{2|V(\TT)|}{c(\TT)}}.
\end{equation}
\end{proof}

\begin{proof} [Proof of Proposition \ref{thebigguy} for general simplex trees]
    
Fix an integer $k$ in the statement of Proposition \ref{thebigguy} such that $1 \leq k < \frac{d+1}{2}$.  If there are less than two simplices of dimension greater than $k$ in $\TT$, then we are done by Lemma \ref{simplebigguy1} and Theorem \ref{extendsmall2023}.  Now suppose there are at least two simplices in $\TT$ of dimension greater than $k$.  Then there must be at least two $k$-leafs in $\mathcal{T}$.  Fix two distinct $k$-leafs $S_1$ and $S_2$ of dimensions $n, m > k$ respectively.
\begin{itemize}
    \item Step 1: We reduce to the case where one of $S_1, S_2$ has at most one child vertex and the other has no child vertices.  To do this, we iteratively apply branch shifting.  Consider the case where $S_1$ has more than one child vertex.  Here, we apply branch shifting to a pair of child vertices of $S_1$ to reduce to the case where $S_1$ has one less child vertex, and repeat this process until $S_1$ has a single child vertex.  Therefore, we may assume that $S_1$ has either zero or one child vertices.  Similarly, we may assume that $S_2$ has either zero or one child vertices.  If either $S_1$ or $S_2$ has no child vertices, we are done.  Otherwise, apply branch shifting to the child vertex of $S_1$ and the child vertex of $S_2$.

    \item Step 2: Denote $F_1$ and $F_2$ as the set of free vertices of $S_1$ and $S_2$.  If $S_1$ has a child vertex, set $k_1 = |F_1| - k + 1$, otherwise, set $k_1 = |F_1| - k$.  Similarly, if $S_2$ has a child vertex, set $k_2 = |F_2| - k + 1$, otherwise, set $k_2 = |F_2| - k$.  Then apply simplex unbalancing to $S_1$ and $S_2$ with the appropriate $k_1$, $k_2$.  Note that in all of these cases, this process lets us reduce to the case where one of $S_1$, $S_2$ is an $k$-simplex, and the other is a $(\dim(S_1) + \dim(S_2) - k)$-dimensional simplex.  
\end{itemize}
After an application of these two steps, we have reduced the goal of proving Proposition \ref{thebigguy} for $\TT$ to proving Proposition \ref{thebigguy} for a collection of $\TT_i$, all with one less simplex of dimension greater than $k$ than $\TT$, since the quantity $N_k$ is invariant under branch shifting and simplex unbalancing.  Note that for each of these $\TT_i$, we have that 
\begin{equation}
    \sum_{\substack{S \in \TT_i \\ \dim(S) > k}} \dim(S) \ = \ \left(\sum_{\substack{S \in \TT \\ \dim(S) > k}} \dim(S)\right) - k.
\end{equation}

Therefore, if we start with a simplex tree $\TT$, and repeatedly apply the above two steps to reduce proving Proposition \ref{thebigguy} for $\TT$ to proving Proposition \ref{thebigguy} for a collection of $\TT_j$ each containing exactly one simplex $S_j$ of dimension greater than $k$, we see that for each $\TT_j$,
\begin{equation}
    \dim(S_j) \ = \ k+ \sum_{\substack{S \in \TT \\ \dim(S) > k}} (\dim(S) - k)
\end{equation}

Applying Lemma \ref{simplebigguy1} to each $T_j$ completes the proof of Proposition \ref{thebigguy} in full generality.

\end{proof}

\section{Cycles of simplices}

In this section, we discuss partial results related to cycles of simplices.

\begin{definition}
    Let $\mathcal{C}$ be a finite set of simplices $S_1, \ldots S_k$ where $S_i$ shares a vertex with $S_{i+1}$ for all $1 \leq i \leq k-1$, and $S_k$ shares a vertex with $S_1$.  Then we call $\CC$ a cycle of simplices.
\end{definition}

Our goal is to find an $s$ such that $\mathbb{F}_q^d$ contains a positive proportion of congruence classes of $\CC$ in $\mathbb{F}_q^d$ whenever $|E| \gtrsim q^s$.  As argued for simplex trees in Lemma \ref{numembeddingsofsimplextrees}, we have the following:

\begin{lemma}
    The number of congruence classes of $\CC$ in $\mathbb{F}_q^d$ is on the order of $q^{\sum_{S \in \CC} c(S)}$.  We denote $\sum_{S \in \CC} c(S)$ as $c(\CC)$.
\end{lemma}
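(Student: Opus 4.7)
The plan is to adapt the argument of Lemma \ref{numembeddingsofsimplextrees} for simplex trees, with an additional step to handle the cycle-closing constraint. The structural observation that underlies the tree case carries over: in a cycle $\CC$, consecutive simplices share only a single vertex, and therefore each edge of $\CC$ belongs to exactly one simplex $S \in \CC$. Consequently, a congruence class of $\CC$ is determined by specifying a congruence class for each constituent simplex $S$, subject only to the existence of a joint embedding into $\FF_q^d$. Since the number of congruence classes of a single simplex $S$ is on the order of $q^{c(S)}$, the number of congruence classes of $\CC$ is at most $\prod_{S \in \CC} q^{c(S)} = q^{c(\CC)}$.

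For the matching lower bound, I would show that a positive proportion of the tuples of simplex congruence classes are realized by an actual embedding of $\CC$. Remove a single simplex $S_k$ from $\CC$ to obtain the simplex path $\TT = S_1 \cup \cdots \cup S_{k-1}$; since $\TT$ is a simplex tree, Lemma \ref{numembeddingsofsimplextrees} gives on the order of $q^{c(\TT)}$ congruence classes of $\TT$. Denote by $v_{k-1} \in S_{k-1}$ and $v_k \in S_1$ the two vertices to be shared with $S_k$ in the cycle. For any embedding of $\TT$ realizing a congruence class $\delta$, the distance $||v_{k-1}-v_k||$ is a well-defined element of $\FF_q$. The crucial claim is that for a generic $\delta$, the map from embeddings of $\TT$ in $\delta$ to $\FF_q$ given by this end-to-end distance has image of size on the order of $q$. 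This is plausible because the simplex path has nontrivial angular freedom at each internal shared vertex (roughly $|O_{d-1}(\FF_q)|$ worth of rotations per vertex, up to stabilizers), and propagating these rotations moves $v_{k-1}$ over a substantial subset of $\FF_q^d$.

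Granting this claim, for each congruence class of $\TT$ and each achievable value $t$ of $||v_{k-1}-v_k||$, one may freely choose the congruence class of $S_k$ subject to its edge $(v_{k-1}, v_k)$ having length $t$. Since fixing one pairwise distance generically reduces the number of congruence classes of a simplex by a factor of $q$, there are on the order of $q^{c(S_k)-1}$ such choices. Multiplying, the number of congruence classes of $\CC$ is on the order of $q^{c(\TT)} \cdot q \cdot q^{c(S_k)-1} = q^{c(\TT)+c(S_k)} = q^{c(\CC)}$, matching the upper bound.

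The main obstacle is rigorously establishing the variability claim: for a generic congruence class of $\TT$, the end-to-end distance $||v_{k-1}-v_k||$ attains on the order of $q$ distinct values across its fiber of embeddings. A straightforward dimension count suffices when all simplices in $\CC$ have dimension at most $d-1$, since the rotational freedom at each internal shared vertex is abundant. In more delicate regimes, such as higher-dimensional simplices or the low-dimensional ambient space $\FF_q^2$, rigorously controlling the image of the end-to-end distance map may require Fourier-analytic techniques in the spirit of Section 3.
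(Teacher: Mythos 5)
You correctly identify a subtlety that the paper glosses over. The paper's one-line invocation of ``as argued for simplex trees'' only directly yields the upper bound for cycles: unlike the tree case, not every tuple of simplex congruence classes for a cycle $\CC$ is realizable, since the cycle must close up. The injective direction (a congruence class of $\CC$ determines the congruence class of each $S$, hence $\lesssim q^{c(\CC)}$) carries over verbatim, but the surjective direction (a positive proportion of tuples $(\delta_1,\ldots,\delta_k)$ are realized by some embedding) requires a genuine argument for cycles that the tree proof does not supply.

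Your plan for the lower bound---remove $S_k$ to obtain a simplex chain $\TT=S_1\cup\cdots\cup S_{k-1}$, apply Lemma \ref{numembeddingsofsimplextrees} to get $\approx q^{c(\TT)}$ congruence classes of $\TT$, argue that for a generic congruence class of $\TT$ the end-to-end distance $\|v_{k-1}-v_k\|$ ranges over $\gtrsim q$ values across embeddings in that class, and then multiply by $\approx q^{c(S_k)-1}$ compatible congruence classes of $S_k$ per achievable distance---is a sound route, and the arithmetic $q^{c(\TT)}\cdot q\cdot q^{c(S_k)-1}=q^{c(\CC)}$ is correct. However, as you acknowledge, the variability claim is the crux and you have not established it. For small simplex dimensions and $k\geq 3$, a dimension count on the rotational freedom at each internal shared vertex (each contributes roughly $|O_{d-1}(\FF_q)|/\Stab(\cdot)$ worth of freedom propagating to the position of the next shared vertex) should suffice; for high-dimensional simplices or the delicate case $d=2$ you would likely need to import the quantitative path-counting or cycle-counting results from \cite{longpaths} and \cite{cycles}, which the paper does cite elsewhere. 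So: your approach is more careful than the source's terse gloss and is structurally correct, but it leaves an acknowledged gap at the key step.
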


Following the framework developed in Section 4, to find an $s$ corresponding to a given simplex cycle $\CC$, we first reduce this goal finding an $s$ for a simple class of simplex cycles consisting of one large simplex and all other simplices of dimension one. This is more precisely described in the proposition below.

\begin{prop} \label{goalof5.1}
    Consider a cycle of simplices $\mathcal{C}$ with $k$ simplices, and define.
    $$N\ =\ 1 + \sum_{S \in \CC, \dim(S) > 1} (\dim(S) - 1)$$
    Let $\mathcal{C}'$ be a cycle of simplices containing $k-1$ simplices of dimension 1 and one simplex of dimension $N$.  Then if for some $s \in \mathbb{R}$, $E$ contains a positive proportion of congruence classes of $\CC'$ in $\mathbb{F}_q^d$ whenever $|E| \gtrsim q^s$, then the same is true for $\CC$ and the same $s$ as well.
\end{prop}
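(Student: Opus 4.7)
The plan is to imitate the framework of Section 4, constructing a sum $R_{\CC}$ that counts pairs of congruent embeddings of $\CC$ in $E$ up to constants, and then developing a cyclic analog of simplex unbalancing (Lemma \ref{simplexunbalancing}) that reduces $\CC$ to $\CC'$. Crucially, the quantity $N$ is invariant under Hadamard three-lines moves that transfer free vertices between two simplices of the cycle, so iterating these moves will shrink every simplex of dimension greater than one down to dimension one, except for a single simplex that absorbs the vertices and grows to dimension exactly $N$.

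First I would set up the sum $R_{\CC}$. Enumerating the simplices cyclically as $S_1, \ldots, S_k$ and letting $v_i = V(S_i) \cap V(S_{i+1})$ (indices mod $k$), each $S_i$ has two shared vertices $v_{i-1}, v_i$ and $\dim(S_i) - 1$ free vertices. For a choice of rotations $\theta_i \in \oq{d}$ and translations $w_i \in \fqd$, the factor $\lambda_{\theta_i}^{\dim(S_i)-1}(w_i)$ counts pairs of congruent embeddings of the free vertices of $S_i$, while compatibility at each shared vertex is enforced by summing $x, x' \in E$ with $x - \theta_i x' = w_i$ and $x - \theta_{i+1} x' = w_{i+1}$. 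Dividing by the appropriate product of stabilizers gives $\stsumc \approx R_{\CC}$, as in the tree case.

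Next I would formulate cyclic simplex unbalancing. Given two simplices $S_i, S_j \in \CC$ with $\dim(S_i), \dim(S_j) \geq 2$ and positive integers $k_i, k_j$ not exceeding the respective numbers of free vertices, I would introduce a complex parameter $z$ that perturbs the exponents on $\lambda_{\theta_i}$ and $\lambda_{\theta_j}$ in opposite directions, keeping the total sum of exponents fixed. Defining $\psi(a_i, a_j)$ to be the resulting modified sum with exponents $a_i + (|F_i| - k_i)$ and $a_j + (|F_j| - k_j)$, the factor $\lambda_\theta^z = \exp(z \log \lambda_\theta)$ is entire and bounded on vertical strips (using $\lambda_\theta \leq |E|$), and the maximum on vertical lines occurs on the real axis by iterated triangle inequality. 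Applying Corollary \ref{repeatedhadamard} then yields $\ppc(\CC) \leq \max(\ppc(\CC_1), \ppc(\CC_2))$, where $\CC_1, \CC_2$ are the cycles obtained by migrating all transferred vertices to $S_j$ or $S_i$ respectively. The stabilizer bookkeeping from Lemma \ref{step0lemma} carries over essentially unchanged, since we are again only altering the dimensions of two simplices by opposite amounts.

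Iterating this operation on any two simplices of dimension at least two reduces the number of such simplices by one at each step, so after finitely many applications we are left with a cycle containing exactly one simplex of dimension greater than one, necessarily of dimension $N$ by invariance, which is $\CC'$. The main obstacle will lie in verifying the hypotheses of Corollary \ref{repeatedhadamard} rigorously in the cyclic setting: unlike the rooted tree case, the factors $D_{S_i}$ in $R_{\CC}$ do not decouple along branches but are coupled through the closed chain of compatibility conditions on the $\theta_i, w_i$. I expect that the triangle-inequality argument still works, since each variable $\theta_i, w_i$ appears in only two compatibility sums (at the two ends of $S_i$) and the dependence on $z$ is confined to the $\lambda_\theta$ exponents at the two modified simplices, which may be handled independently; nonetheless, this verification is the genuinely new input beyond a mechanical adaptation of Section 4.
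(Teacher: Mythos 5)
Your overall strategy is right --- set up a group-action sum counting pairs of congruent embeddings of $\CC$, prove a stabilizer bookkeeping lemma, apply Hadamard three-lines in the style of simplex unbalancing to shift free vertices between two simplices while preserving $N$, and iterate until only one simplex of dimension $>1$ remains --- but your parameterization of the sum differs materially from the paper's. You propose introducing a rotation $\theta_i$ and translation $w_i$ for every simplex $S_i$ in the cycle, with compatibility constraints at each shared vertex, so that the $z$-dependence sits in only two $\lambda_{\theta_i}$ exponents but the constraints are coupled cyclically through all $k$ pairs $(\theta_i, w_i)$. The paper instead distinguishes only the two simplices $S_1, S_2$ to be unbalanced, introduces rotations $\theta, \phi$ and translations only for those two, and packages the rest of the cycle into chain counting functions $f_{\delta_1}, g_{\delta_2}$ (nonadjacent case, as $D_{\mathrm{nonadj}}$) or $F_{\delta'}$ (adjacent case, as $D_{\mathrm{adj}}$). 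This has two advantages: the chain functions are manifestly $z$-independent, so the hypotheses of Corollary~\ref{repeatedhadamard} follow immediately by iterated triangle inequality without any worry about cyclic coupling; and one avoids carrying around a product of $k$ stabilizer factors (which, note, would be substantial --- $\Stab(1) \approx q^{\binom{d-1}{2}}$ for each $1$-simplex when $d > 2$). Your proposal should be salvageable --- the $z$-dependence really is confined to two factors and $|\lambda_\theta^{a + iy}| = \lambda_\theta^a$ gives the maximum-on-real-axis condition --- but you leave exactly the step you flag as ``the genuinely new input'' unverified, whereas the paper's decomposition makes it a non-issue. You should also note that the paper needs the adjacent/nonadjacent dichotomy precisely because it only distinguishes two simplices; your uniform parameterization sidesteps that dichotomy at the cost of heavier normalization.
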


The proof of the above Proposition requires a modification of the simplex unbalancing technique from earlier.  However after we reduce to this class of simple cycles, we find that various technical obstructions occur in trying to generalize the methods of Section 3 to this setting.  Morally, however, we expect the $s$ for $\CC'$ to be slightly larger or equal to that of simplices of dimension $N$, as cycles of arbitrary length are much more abundant than simplices of dimension $N$ in $\mathbb{F}_q^d$.




\subsection{The setup and Hadamard three-lines reduction}

Consider a cycle of simplices $\mathcal{C}$ given by a collection of simplices $S_1, \ldots, S_k$, where $S_i$ shares a vertex with $S_{i+1}$ for all $1 \leq i \leq k-1$ and $S_k$ shares a vertex with $S_1$.  Again, we are interested in finding an $s$ such that $\mathbb{F}_q^d$ contains a positive proportion of congruence classes of $\CC$ in $\mathbb{F}_q^d$ whenever $|E| \gtrsim q^s$.  Using the same Cauchy-Schwarz setup as in Section 4.1, we show that this problem is equivalent to finding an $s$ such that whenever $|E| \gtrsim q^s$,
\begin{equation}
    \sum_{\delta \in \mathbb{D}} \nu_{\mathcal{C}}^2(\delta) \ \lesssim\ \eoq{2|V(\mathcal{C})|}{c(\mathcal{C})}
\end{equation}
As usual, $\mathbb{D}$ denotes the space of congruence classes of $\mathcal{C}$, $\nu_{\mathcal{C}}(\delta)$ denotes the number of embeddings in the congruence class $\delta$ contained in $E$, and $V(\mathcal{C})$ denotes the vertex set of $\mathcal{C}$.  Analogous to simplex trees, a vertex in $\mathcal{C}$ is called a free vertex if it is shared by only one simplex.  As per Section 4.1, the goal is now be to write $\stsumc$ in a way that is more amenable to applying Hadamard three-lines.  We have two different ways of doing this, depending on whether the two simplices $S_1, S_2 \in \CC$ we distinguish share a vertex or not. \bigskip

First, consider nonadjacent simplices $S_1$, $S_2$ of $\mathcal{C}$ with dimension $m$ and $n$ respectively. Denote $\mathcal{H}_1$ and $\mathcal{H}_2$ as the two chains of simplices connecting $S_1$ to $S_2$, denote $v_{1, i}$ as the vertex shared by $S_1$ and $\mathcal{H}_i$, and denote $v_{2, i}$ similarly as the vertex shared by $S_2$ and $\mathcal{H}_i$.  Let $\delta_1$ and $\delta_2$ be respective congruence class of these two chains.  Let the function $f_{\delta_1}(x_1, x_2)$ count the number of embeddings of $\mathcal{H}_1$ into $E$ in a given congruence class $\delta_1$, such that the embedding sends $v_{1, 1}$ to $x_1$ and $v_{1, 2}$ to $x_2$.  Similarly, let the function $g_{\delta_2}(x_1, x_2)$ count the number of embeddings of $\mathcal{H}_2$ into $E$ in a given congruence class $\delta_2$, such that the embedding sends $v_{2, 1}$ to $x_1$ and $v_{2, 2}$ to $x_2$.  Then we define
\begin{align}
    D_{\text{nonadj}}(\CC, S_1, S_2)\ &=\ \frac{1}{\Stab(m) \Stab(n)} \sum_{\delta_1, \delta_2} \, \, \sum_{\theta, \phi \in \oq{d}} \, \, \sum_{w_1, w_2 \in \mathbb{F}_q^d} \lambda_{\theta}^{m-1}(w_1)\lambda_{\phi}^{n-1}(w_2) \notag \\ 
    &\hspace{0.5cm}\sum_{\substack{u_1, u_1', v_1, v_1' \in E \\ u_1 - \theta u_1' = \\ v_1 - \theta v_1' = w_1}} \, \, \sum_{\substack{u_2, u_2', v_2, v_2' \in E \\ u_2 - \phi u_2' = \\ v_2 - \phi v_2' = w_2}} f_{\delta_1}(u_1, u_2) f_{\delta_1}(u_1', u_2') g_{\delta_2}(v_1, v_2) g_{\delta_2}(v_1', v_2').
\end{align}
\begin{prop}
For nonadjacent simplices $S_1, S_2 \in \CC$ with dimensions $m$ and $n$ respectively, we have that
    \begin{equation}
        \stsumc\ \approx\ D_{\mathrm{nonadj}}(\CC, S_1, S_2).
    \end{equation}
\end{prop}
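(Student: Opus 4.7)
The plan is to interpret $D_{\text{nonadj}}(\CC, S_1, S_2)$ combinatorially as counting (up to a constant) pairs of congruent embeddings of $\CC$ in $E$, which is exactly what $\stsumc$ counts. The strategy mirrors the proof of Proposition \ref{bigsumforFRST} and the construction of $R_{(\TT, S_0)}$ in Section 4.1, adapted to cycles by ``cutting'' $\CC$ at the two nonadjacent anchor simplices: removing $S_1$ and $S_2$ breaks $\CC$ into the two chains $\mathcal{H}_1, \mathcal{H}_2$, so every pair of congruent embeddings of $\CC$ decomposes into congruent data on the four pieces $S_1, S_2, \mathcal{H}_1, \mathcal{H}_2$, with the chains inheriting well-defined congruence classes $\delta_1, \delta_2$.

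First I would show that summing over $(\delta_1, \delta_2)$ partitions the set of pairs of congruent embeddings of $\CC$, so no pair is counted twice. For each fixed $(\delta_1, \delta_2)$, a pair of congruent embeddings of $S_1$ is parameterized by a rigid motion $\rho(\theta, w_1)$ together with compatible images of the $m+1$ vertices of $S_1$; exactly two of those vertices ($v_{1,1}, v_{1,2}$) are shared with the chains and are accounted for by the innermost sums over $u_1, u_1', v_1, v_1'$ with $u_1 - \theta u_1' = v_1 - \theta v_1' = w_1$, while the remaining $m-1$ pairs of free-vertex images contribute the factor $\lambda_\theta^{m-1}(w_1)$ by the argument underlying \eqref{eqn:simplexDdef}. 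The analogous decomposition applies to $S_2$ with $\lambda_\phi^{n-1}(w_2)$. Once the endpoints of each chain are fixed in both embeddings, the number of congruent fillings of $\mathcal{H}_i$ in class $\delta_i$ is, by definition of $f_{\delta_1}$ and $g_{\delta_2}$, the product of the two values at the corresponding endpoint pairs. Multiplying these factors and summing over the indicated variables produces $\Stab(m)\Stab(n) \cdot D_{\text{nonadj}}(\CC, S_1, S_2)$ counted with multiplicity, since on the nondegenerate locus the sum over $\theta$ (resp.\ $\phi$) overcounts each pair of congruent embeddings of $S_1$ (resp.\ $S_2$) by exactly $\Stab(m)$ (resp.\ $\Stab(n)$) — the same mechanism used in the proof of Proposition \ref{bigsumforFRST}.

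The stabilizer prefactor $\frac{1}{\Stab(m)\Stab(n)}$ therefore restores the correct order, yielding $\stsumc \approx D_{\text{nonadj}}(\CC, S_1, S_2)$. The main obstacle I anticipate is the bookkeeping needed to verify that the stabilizer correction genuinely factorizes: this relies on the nonadjacency of $S_1$ and $S_2$, so that the two $\theta, \phi$ overcountings are independent (if they were adjacent, a shared vertex would couple the two rigid motions), and on restricting to nondegenerate embeddings, where stabilizer sizes are uniformly $\Stab(m), \Stab(n)$. The justification for discarding degenerate embeddings is the remark after Lemma \ref{numberofconclassnsimp}, which shows their contribution to the number of congruence classes is negligible and so does not affect the asymptotic equivalence.
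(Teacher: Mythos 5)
Your proposal is correct and follows essentially the same route as the paper: both decompose the pair-count for $\CC$ into the $\lambda_\theta^{m-1}$, $\lambda_\phi^{n-1}$ contributions of the free vertices of $S_1, S_2$, the inner sums over the four shared vertices, and the chain weights $f_{\delta_1}, g_{\delta_2}$, with the stabilizer prefactor correcting the overcounting from the $\theta,\phi$-sums on the nondegenerate locus. The paper phrases it as first counting congruent pairs of $S_1 \sqcup S_2$ and then multiplying by the chain extensions, while you phrase it as cutting $\CC$ at $S_1, S_2$ and reassembling, but these are the same decomposition read in opposite directions, and your additional remarks on the $(\delta_1,\delta_2)$-partition and the independence of the two rigid motions under nonadjacency are just making explicit what the paper leaves implicit.
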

\begin{proof}
    We first count the number of pairs of congruent embeddings $h, h': V(S_1 \cup S_2) \to E$ of the collection of two disjoint simplices $S_1, S_2$.  Let the dimensions of $S_1$ and $S_2$ be $m$ and $n$ respectively. By the methods of Section 4.1, we see that this is on the order of
    \begin{align}
        &\frac{1}{\Stab(m) \Stab(n)}  \sum_{\theta, \phi \in \oq{d}} \, \, \sum_{w_1, w_2 \in \mathbb{F}_q^d} \lambda_{\theta}^{m+1}(w_1)\lambda_{\theta}^{n+1}(w_2) \\
        &=\ \frac{1}{\Stab(m) \Stab(n)}  \sum_{\theta, \phi \in \oq{d}} \, \, \sum_{w_1, w_2 \in \mathbb{F}_q^d} \lambda_{\theta}^{m-1}(w_1)\lambda_{\phi}^{n-1}(w_2) \sum_{\substack{u_1, u_1', v_1, v_1' \in E \\ u_1 - \theta u_1' = \\ v_1 - \theta v_1' = w_1}} \, \, \sum_{\substack{u_2, u_2', v_2, v_2' \in E \\ u_2 - \phi u_2' = \\ v_2 - \phi v_2' = w_2}} 1.
    \end{align}
In the equality above, we just unpack the definition of $\lambda_{\theta}$ and $\lambda_{\phi}$.  For every congruent pair of embeddings $h, h'$ the expression above counts, $u_1$ and $v_1$ are the vertices $h(v_{1, 1})$ and $h(v_{1, 2})$ of $h(S_1)$, and $u_1'$ and $v_1'$ are the vertices $h'(v_{1, 1})$ and $h'(v_{1, 2})$ of $h'(S_1)$.  Similarly $u_2$ and $v_2$ are the the vertices $h(v_{2, 1})$ and $h(v_{2, 2})$ of $h(S_2)$, and $u_2'$ and $v_2'$ are the vertices $h'(v_{2, 1})$ and $h'(v_{2, 2})$ of $h'(S_2)$.  See Figure \ref{cyclepicnonadj} for an example where $\dim(S_1) = 2$ and $\dim(S_2) = 3$. \bigskip

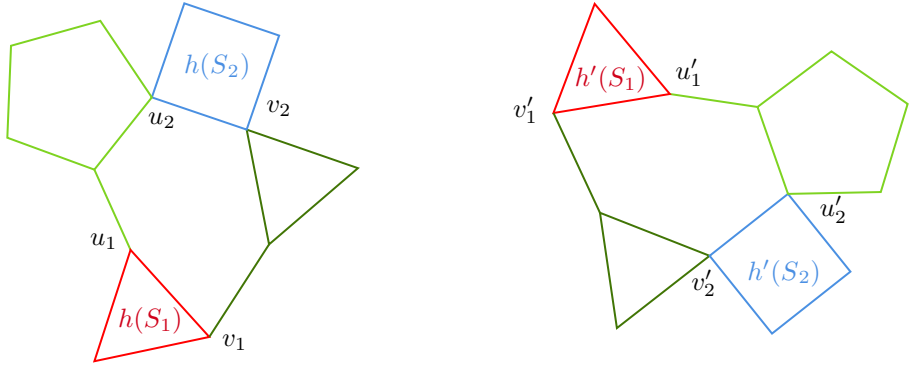
\begin{figure}[hbt!]
\centering

\tikzset{every picture/.style={line width=0.75pt}} 

\begin{tikzpicture}[x=0.75pt,y=0.75pt,yscale=-1,xscale=1]

\draw  [color={rgb, 255:red, 65; green, 117; blue, 5 }  ,draw opacity=1 ] (216.6,114.46) -- (272.12,133.86) -- (227.78,172.21) -- cycle ;
\draw  [color={rgb, 255:red, 74; green, 144; blue, 226 }  ,draw opacity=1 ] (169.29,98.25) -- (185.5,50.95) -- (232.8,67.16) -- (216.6,114.46) -- cycle ;
\draw  [color={rgb, 255:red, 126; green, 211; blue, 33 }  ,draw opacity=1 ] (169.29,98.25) -- (140.61,134.6) -- (97.19,118.55) -- (99.03,72.29) -- (143.59,59.75) -- cycle ;
\draw [color={rgb, 255:red, 126; green, 211; blue, 33 }  ,draw opacity=1 ]   (140.61,134.6) -- (158.67,175) ;
\draw  [color={rgb, 255:red, 255; green, 0; blue, 0 }  ,draw opacity=1 ] (158.67,175) -- (197.96,218.77) -- (140.63,230.99) -- cycle ;
\draw [color={rgb, 255:red, 65; green, 117; blue, 5 }  ,draw opacity=1 ]   (227.78,172.21) -- (197.96,218.77) ;
\draw  [color={rgb, 255:red, 65; green, 117; blue, 5 }  ,draw opacity=1 ] (447.58,177.95) -- (401.33,214.31) -- (392.88,156.3) -- cycle ;
\draw  [color={rgb, 255:red, 74; green, 144; blue, 226 }  ,draw opacity=1 ] (486.69,146.81) -- (517.84,185.92) -- (478.72,217.07) -- (447.58,177.95) -- cycle ;
\draw  [color={rgb, 255:red, 126; green, 211; blue, 33 }  ,draw opacity=1 ] (486.69,146.81) -- (471.49,103.08) -- (508.39,75.11) -- (546.39,101.56) -- (532.98,145.87) -- cycle ;
\draw [color={rgb, 255:red, 126; green, 211; blue, 33 }  ,draw opacity=1 ]   (471.49,103.08) -- (427.72,96.55) ;
\draw  [color={rgb, 255:red, 255; green, 0; blue, 0 }  ,draw opacity=1 ] (427.72,96.55) -- (369.68,106.11) -- (390.24,51.21) -- cycle ;
\draw [color={rgb, 255:red, 65; green, 117; blue, 5 }  ,draw opacity=1 ]   (392.88,156.3) -- (369.68,106.11) ;

\draw (137,165.4) node [anchor=north west][inner sep=0.75pt]  [font=\small]  {$u_{1}$};
\draw (202.96,216.17) node [anchor=north west][inner sep=0.75pt]  [font=\small]  {$v_{1}$};
\draw (166,104.4) node [anchor=north west][inner sep=0.75pt]  [font=\small]  {$u_{2}$};
\draw (225,98.4) node [anchor=north west][inner sep=0.75pt]  [font=\small]  {$v_{2}$};
\draw (429,77.4) node [anchor=north west][inner sep=0.75pt]  [font=\small]  {$u'_{1}$};
\draw (501,146.4) node [anchor=north west][inner sep=0.75pt]  [font=\small]  {$u'_{2}$};
\draw (437,183.4) node [anchor=north west][inner sep=0.75pt]  [font=\small]  {$v'_{2}$};
\draw (348,96.4) node [anchor=north west][inner sep=0.75pt]  [font=\small]  {$v'_{1}$};
\draw (151,202.4) node [anchor=north west][inner sep=0.75pt]  [font=\small,color={rgb, 255:red, 208; green, 2; blue, 27 }  ,opacity=1 ]  {$h( S_{1})$};
\draw (378.84,81) node [anchor=north west][inner sep=0.75pt]  [font=\small,color={rgb, 255:red, 208; green, 2; blue, 27 }  ,opacity=1 ]  {$h'( S_{1})$};
\draw (184,74.4) node [anchor=north west][inner sep=0.75pt]  [font=\small,color={rgb, 255:red, 208; green, 2; blue, 27 }  ,opacity=1 ]  {$\textcolor[rgb]{0.29,0.56,0.89}{h}\textcolor[rgb]{0.29,0.56,0.89}{(}\textcolor[rgb]{0.29,0.56,0.89}{S}\textcolor[rgb]{0.29,0.56,0.89}{_{2}}\textcolor[rgb]{0.29,0.56,0.89}{)}$};
\draw (465,178.4) node [anchor=north west][inner sep=0.75pt]  [font=\small,color={rgb, 255:red, 208; green, 2; blue, 27 }  ,opacity=1 ]  {$\textcolor[rgb]{0.29,0.56,0.89}{h'}\textcolor[rgb]{0.29,0.56,0.89}{(}\textcolor[rgb]{0.29,0.56,0.89}{S}\textcolor[rgb]{0.29,0.56,0.89}{_{2}}\textcolor[rgb]{0.29,0.56,0.89}{)}$};

\end{tikzpicture}
\caption{A potential extension $\displaystyle \tilde{h}$ and $\displaystyle \widetilde{h'}$ of two embeddings $\displaystyle h,\ h':\ V( S_{1} \cup S_{2}) \ \rightarrow E$. }
\label{cyclepicnonadj}
\end{figure}

We now weight the count of each pair of congruent embeddings $h, h': V(S_1 \cup S_2) \to E$ by how many pairs of congruent embeddings $\tilde{h}, \tilde{h'}: V(\CC) \to E$ it extends to.  This is exactly

\begin{equation}
    \sum_{\delta_1, \delta_2} f_{\delta_1}(u_1, u_2) f_{\delta_1}(u_1', u_2') g_{\delta_2}(v_1, v_2) g_{\delta_2}(v_1', v_2'),
\end{equation}
completing the proof of the proposition. \bigskip
\end{proof}

Now consider adjacent simplices $S_1$, $S_2$ of $\CC$ with dimensions $m$ and $n$ respectively.  Denote $\mathcal{H}$ as the chain of simplices connecting $S_1$ to $S_2$, denote $v_{1}$ as the vertex shared by $\mathcal{H}$ and $S_1$, and denote $v_{2}$ similarly as the vertex shared by $\mathcal{H}$ and $S_2$.  Let $\delta'$ be a congruence class of $\mathcal{H}$.  Let the function $F_{\delta'}(x_1, x_2)$ count the number of embeddings of $\mathcal{H}$ into $E$ in the given congruence class $\delta'$, such that the embedding sends $v_{1}$ to $x_1$ and $v_{2}$ to $x_2$.  Then we define
\begin{align}
    D_{\text{adj}}(\CC, S_1, S_2)\ =\ \frac{1}{\Stab(m) \Stab(n)} \sum_{\delta'} \, \, \sum_{\theta, \phi \in \oq{d}} \, \, \sum_{x_1, x_2 \in \mathbb{F}_q^d} \lambda_{\theta}^{m-1}(x - \theta x')\lambda_{\phi
    }^{n-1}(x-\phi x') \notag \\ \sum_{\substack{u_1, u_1' \in E \\ u_1 - \theta u_1' \\ = x - \theta x'}} \, \, \sum_{\substack{u_2, u_2' \in E \\ u_2 - \phi u_2' \\ = x - \phi x'}} F_{\delta'}(u_1, u_2) F_{\delta'}(u_1', u_2').
\end{align}

\begin{prop}
For adjacent simplices $S_1, S_2 \in \CC$ with dimensions $m$ and $n$ respectively, we have that
    \begin{equation}
        \stsumc\ \approx\ D_{\mathrm{adj}}(\CC, S_1, S_2).
    \end{equation}
\end{prop}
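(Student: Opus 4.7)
The plan is to mirror the proof of the nonadjacent case, modified to account for the fact that $S_1$ and $S_2$ share a vertex directly rather than being joined only through a chain. First I would count pairs of congruent embeddings of the combined structure $S_1 \cup S_2$ (two simplices glued at a single vertex), and then weight each such pair by the number of ways to extend it to a congruent pair of embeddings of $\CC$ via the chain $\mathcal{H}$.

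To execute the first step, let $s$ denote the vertex shared by $S_1$ and $S_2$, and write $x = h(s)$, $x' = h'(s)$. Two congruent embeddings of $S_1 \cup S_2$ whose images of $s$ are $x$ and $x'$ correspond to rigid motions $\rho(\theta, x - \theta x')$ and $\rho(\phi, x - \phi x')$ sending $h'(S_1)$ to $h(S_1)$ and $h'(S_2)$ to $h(S_2)$ respectively. Crucially, $\theta$ and $\phi$ may be chosen independently, since $S_1$ and $S_2$ are rigidly constrained to each other only at the shared vertex; the global constraint that the chain $\mathcal{H}$ close up is handled separately by the chain weights. After fixing $x, x', \theta$, and $\phi$, the remaining $m$ vertex pairs of $S_1$ contribute on the order of $\lambda_\theta^m(x - \theta x')$, and analogously $\lambda_\phi^n(x - \phi x')$ for $S_2$. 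To explicitly track the attachment points $v_1 \in V(S_1)$ and $v_2 \in V(S_2)$ of the chain $\mathcal{H}$, I would split off one factor of $\lambda_\theta$ into an explicit sum over $u_1, u_1' \in E$ satisfying $u_1 - \theta u_1' = x - \theta x'$, and similarly split $\lambda_\phi$ into a sum over $u_2, u_2' \in E$ satisfying $u_2 - \phi u_2' = x - \phi x'$, leaving residual factors $\lambda_\theta^{m-1}$ and $\lambda_\phi^{n-1}$ that match the definition of $D_{\mathrm{adj}}$.

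With the quadruple $(u_1, u_1', u_2, u_2')$ fixed as the tracked images of $v_1$ and $v_2$, the number of ways to extend to a pair of congruent embeddings of $\mathcal{H}$ is precisely $\sum_{\delta'} F_{\delta'}(u_1, u_2) F_{\delta'}(u_1', u_2')$, by the definition of $F_{\delta'}$. Multiplying these contributions and summing over all $x, x', \theta, \phi$ then counts each pair of congruent embeddings of $\CC$ multiple times: for any fixed congruent pair, replacing $\theta$ with $\theta\psi_1$ for $\psi_1 \in \mathrm{Stab}(h'(S_1))$ produces the same pair, and similarly for $\phi$, so the overcount is on the order of $\mathrm{Stab}(m)\mathrm{Stab}(n)$. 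This is corrected by the prefactor $\tfrac{1}{\mathrm{Stab}(m)\mathrm{Stab}(n)}$ in the definition of $D_{\mathrm{adj}}$, by the same stabilizer calculation used in Section~4.1 and in the nonadjacent case.

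The main subtlety to verify is the independence of $\theta$ and $\phi$. This reduces to the observation that $h'(S_1)$ and $h'(S_2)$ may be independently rotated about the shared vertex (with pre-images $x'$) to match $h(S_1)$ and $h(S_2)$ about $x$; the requirement that the embedded cycle close through $\mathcal{H}$ constrains the images $h(v_i), h'(v_i)$ of the attachment vertices, but this constraint is captured entirely by the chain weights $F_{\delta'}$ and does not couple $\theta$ to $\phi$. Once this independence is established, the remainder of the argument is a direct adaptation of the bookkeeping already carried out in the nonadjacent case, and no new analytic input is required.
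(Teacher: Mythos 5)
Your proof is correct and follows essentially the same route as the paper's: count pairs of congruent embeddings of $S_1 \cup S_2$ using independent rigid motions $\rho(\theta, x-\theta x')$, $\rho(\phi, x-\phi x')$ anchored at the shared vertex, split off one $\lambda$-factor per simplex to expose the chain attachment points $(u_1,u_1')$ and $(u_2,u_2')$, weight by $\sum_{\delta'} F_{\delta'}(u_1,u_2)F_{\delta'}(u_1',u_2')$, and divide by $\mathrm{Stab}(m)\mathrm{Stab}(n)$ to correct the stabilizer overcount. (The paper's intermediate display writes $\lambda_\theta^{m-2}\lambda_\phi^{n-2}$, evidently a typo for $\lambda_\theta^{m-1}\lambda_\phi^{n-1}$; your exponents are the ones consistent with the definition of $D_{\mathrm{adj}}$.)
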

\begin{proof}
We first count the number of pairs $h, h': V(S_1 \cup S_2) \to E$ of congruent embeddings of the collection of two simplices $S_1, S_2$ sharing a vertex.  Then by the methods of Section 4.1, we see that this is on the order of
\begin{align}
    &\frac{1}{\Stab(m) \Stab(n)}\sum_{\theta, \phi \in \oq{d}} \, \, \sum_{x_1, x_2 \in \mathbb{F}_q^d} \lambda_{\theta}^{m}(x - \theta x')\lambda_{\phi
    }^{n}(x-\phi x') \\
    &=\ \frac{1}{\Stab(m) \Stab(n)}\sum_{\theta, \phi \in \oq{d}} \, \, \sum_{x_1, x_2 \in \mathbb{F}_q^d} \lambda_{\theta}^{m-2}(x - \theta x')\lambda_{\phi
    }^{n-2}(x-\phi x')\sum_{\substack{u_1, u_1' \in E \\ u_1 - \theta u_1' \\ = x - \theta x'}} \, \, \sum_{\substack{u_2, u_2' \in E \\ u_2 - \phi u_2' \\ = x - \phi x'}} 1.
\end{align}
For every congruent pair of embeddings $h, h'$ the expression above counts, $u_1$ and $u_2$ are the vertices $h(v_{1})$ and $h(v_2)$, and $u_1'$ and $u_2'$ are the vertices $h'(v_{1})$ and $h'(v_2)$. See Figure \ref{cyclespic2} for an example where $\dim(S_1) = 2$ and $\dim(S_2) = 3$. \bigskip

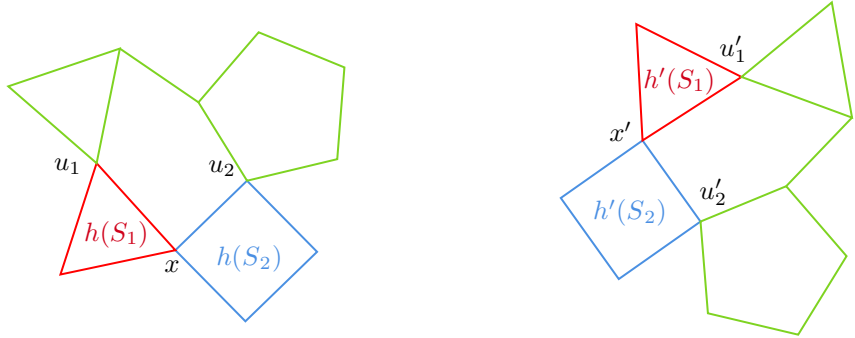
\begin{figure}[hbt!]
\centering

\tikzset{every picture/.style={line width=0.75pt}} 

\begin{tikzpicture}[x=0.75pt,y=0.75pt,yscale=-1,xscale=1]

\draw  [color={rgb, 255:red, 74; green, 144; blue, 226 }  ,draw opacity=1 ] (185.96,129.44) -- (221.72,94.49) -- (256.67,130.25) -- (220.91,165.2) -- cycle ;
\draw  [color={rgb, 255:red, 255; green, 0; blue, 0 }  ,draw opacity=1 ] (146.67,85.67) -- (185.96,129.44) -- (128.63,141.66) -- cycle ;
\draw  [color={rgb, 255:red, 126; green, 211; blue, 33 }  ,draw opacity=1 ] (221.72,94.49) -- (197.49,55.04) -- (227.52,19.81) -- (270.31,37.48) -- (266.73,83.64) -- cycle ;
\draw [color={rgb, 255:red, 126; green, 211; blue, 33 }  ,draw opacity=1 ]   (197.49,55.04) -- (158.27,28) ;
\draw  [color={rgb, 255:red, 126; green, 211; blue, 33 }  ,draw opacity=1 ] (158.27,28) -- (146.67,85.67) -- (102.6,47.01) -- cycle ;
\draw  [color={rgb, 255:red, 74; green, 144; blue, 226 }  ,draw opacity=1 ] (419.05,74.24) -- (447.98,115.02) -- (407.2,143.95) -- (378.27,103.17) -- cycle ;
\draw  [color={rgb, 255:red, 255; green, 0; blue, 0 }  ,draw opacity=1 ] (468.43,42.27) -- (419.05,74.24) -- (415.94,15.7) -- cycle ;
\draw  [color={rgb, 255:red, 126; green, 211; blue, 33 }  ,draw opacity=1 ] (447.98,115.02) -- (490.73,97.25) -- (520.84,132.42) -- (496.7,171.93) -- (451.67,161.17) -- cycle ;
\draw [color={rgb, 255:red, 126; green, 211; blue, 33 }  ,draw opacity=1 ]   (490.73,97.25) -- (523.57,62.74) ;
\draw  [color={rgb, 255:red, 126; green, 211; blue, 33 }  ,draw opacity=1 ] (523.57,62.74) -- (468.43,42.27) -- (513.5,4.78) -- cycle ;

\draw (124,82.07) node [anchor=north west][inner sep=0.75pt]  [font=\small]  {$u_{1}$};
\draw (201,83.07) node [anchor=north west][inner sep=0.75pt]  [font=\small]  {$u_{2}$};
\draw (139,113.07) node [anchor=north west][inner sep=0.75pt]  [font=\small,color={rgb, 255:red, 208; green, 2; blue, 27 }  ,opacity=1 ]  {$h( S_{1})$};
\draw (95,185.67) node [anchor=north west][inner sep=0.75pt]   [align=left] {};
\draw (179,133.07) node [anchor=north west][inner sep=0.75pt]  [font=\small]  {$x$};
\draw (205,125.4) node [anchor=north west][inner sep=0.75pt]  [font=\small,color={rgb, 255:red, 208; green, 2; blue, 27 }  ,opacity=1 ]  {$\textcolor[rgb]{0.29,0.56,0.89}{h}\textcolor[rgb]{0.29,0.56,0.89}{(}\textcolor[rgb]{0.29,0.56,0.89}{S}\textcolor[rgb]{0.29,0.56,0.89}{_{2}}\textcolor[rgb]{0.29,0.56,0.89}{)}$};
\draw (401,63.07) node [anchor=north west][inner sep=0.75pt]  [font=\small]  {$x'$};
\draw (446,92.07) node [anchor=north west][inner sep=0.75pt]  [font=\small]  {$u'_{2}$};
\draw (456,20.07) node [anchor=north west][inner sep=0.75pt]  [font=\small]  {$u'_{1}$};
\draw (418,36.07) node [anchor=north west][inner sep=0.75pt]  [font=\small,color={rgb, 255:red, 208; green, 2; blue, 27 }  ,opacity=1 ]  {$h'( S_{1})$};
\draw (393,102.4) node [anchor=north west][inner sep=0.75pt]  [font=\small,color={rgb, 255:red, 208; green, 2; blue, 27 }  ,opacity=1 ]  {$\textcolor[rgb]{0.29,0.56,0.89}{h'}\textcolor[rgb]{0.29,0.56,0.89}{(}\textcolor[rgb]{0.29,0.56,0.89}{S}\textcolor[rgb]{0.29,0.56,0.89}{_{2}}\textcolor[rgb]{0.29,0.56,0.89}{)}$};

\end{tikzpicture}
\caption{A potential extension $\displaystyle \tilde{h}$ and $\displaystyle \widetilde{h'}$ of two embeddings $\displaystyle h,\ h':\ V( S_{1} \cup S_{2}) \ \rightarrow E$.}
\label{cyclespic2}
\end{figure}
We now weight the count of each pair of congruent embedding $h, h': V(S_1 \cup S_2) \to E$ by how many pairs of congruent embeddings $\tilde{h}, \tilde{h'}: V(\CC) \to E$ it extends to.  This is exactly

\begin{equation}
    \sum_{\delta'} F_{\delta'}(u_1, u_2) F_{\delta'}(u_1' u_2'),
\end{equation}
completing the proof of the proposition.

\end{proof}

We now prove a modified version of simplex unbalancing for cycles to reduce to the case where all but one simplex in $\mathcal{C}$ has dimension 1.  We have the following definition for convenience of notation, and the following Lemma:

\begin{definition}
    For a cycle of simplices $\CC$, denote $\ppc(\CC)$ as the infimum of $s \in \mathbb{R}$ such that $\stsumc \lesssim E^{2|V(\CC)|}q^{-c(\CC)}$ whenever $|E| \gtrsim q^s$.
\end{definition}

\begin{lemma} \label{cycleshelper}
    Given a cycle of simplices $\CC$, consider two distinct simplices $S_1$ and $S_2$ each of dimension at least 2, so each have at least one free vertex.  Consider the cycle of simplices $\CC'$ formed by deleting one free vertex of $S_1$ and adding one free vertex to $S_2$. Then we have that
    \begin{align*}
        \left( \frac{\Stab(\dim(S_1))}{\Stab(\dim(S_1'))} \cdot \frac{\Stab(\dim(S_2))}{\Stab(\dim(S_2'))} \right) q^{c(\CC ' )}\ =\ q^{c(\CC)}.
    \end{align*}
\end{lemma}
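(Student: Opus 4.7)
The plan is to observe that this is essentially a verbatim analogue of Lemma \ref{step0lemma}, and that the only property of simplex trees used in that proof is the fact that $c(\TT)$ decomposes as a sum of $c(S)$ over $S \in \TT$. Since the analogous decomposition $c(\CC) = \sum_{S \in \CC} c(S)$ was already stated in this section, the proof goes through with essentially no modification.

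Concretely, I would begin by rewriting the desired identity in additive (log) form:
\begin{equation}
    c(\CC) - c(\CC')\ =\ c(S_1) - c(S_1') + c(S_2) - c(S_2')\ \stackrel{?}{=}\ \log_q\frac{\Stab(\dim(S_1))}{\Stab(\dim(S_1'))} + \log_q\frac{\Stab(\dim(S_2))}{\Stab(\dim(S_2'))}.
\end{equation}
The first equality is immediate, since $\CC$ and $\CC'$ agree on every simplex other than $S_1, S_2$ and $S_1', S_2'$, so the decomposition of $c$ over simplices collapses the difference to the indicated four terms. What remains is the purely ``local'' identity relating $c(S_i) - c(S_i')$ to the ratio of stabilizer sizes.

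I would then verify this local identity by repeating the exact case analysis used in the proof of Lemma \ref{step0lemma}, namely splitting based on whether $n = \dim(S_1)$ and $m = \dim(S_2)$ are less than $d$ or at least $d$. In each case, Lemma \ref{numberofconclassnsimp} gives the change in $c$ when a simplex gains or loses one vertex (either $n - m - 1$, or $n - d$, or $0$, depending on the regime), and Proposition \ref{stabsizelemma} gives the corresponding change in stabilizer size. The two match in all regimes exactly as computed in Lemma \ref{step0lemma}, so there is genuinely nothing new to verify beyond citing that argument.

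Since the only structural hypothesis used in Lemma \ref{step0lemma} was the additive decomposition of $c(\TT)$ over its constituent simplices, and the corresponding decomposition holds for cycles, no obstacle arises: the argument is mechanical once we reduce to this local identity. If anything, the only subtlety is to note that we do require $S_1$ and $S_2$ to each have at least one free vertex (so that the operation of deleting and adding free vertices is well-defined and leaves a valid cycle), which is exactly the hypothesis in the statement.
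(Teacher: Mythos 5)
Your proposal is correct and matches the paper's proof exactly: the paper's entire proof of Lemma \ref{cycleshelper} is a one-line observation that the argument of Lemma \ref{step0lemma} goes through verbatim because $c(\CC)$, like $c(\TT)$, is defined as $\sum_{S} c(S)$ over constituent simplices. Your reduction to the local identity and the appeal to the case analysis of Lemma \ref{step0lemma} is precisely what the paper intends.
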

\begin{proof}
    This proof follows identically to Lemma \ref{step0lemma} as $c(\TT)$ and $c(\CC)$ are both defined to be the sum of $c(S)$ for all simplices $S$ in the structure.
\end{proof}

\begin{lemma}[Simplex Unbalancing for Cycles]\label{simplexunbalancingforcycles}
    Given a cycle of simplices $\CC$, consider two distinct simplices $S_1$ and $S_2$ each of dimension at least 2.  Consider the cycle of simplices $\CC_1$ formed by deleting all free vertices of $S_2$ and adding the number of free vertices deleted to $S_1$.  Similarly, consider the cycle of simplices $\CC_2$ formed by deleting all free vertices of $S_1$ and adding the number of free vertices deleted to $S_2$. Then $\ppc(\CC) \leq \max(\ppc(\CC_1), \ppc(\CC_2))$. 
\end{lemma}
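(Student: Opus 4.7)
My plan is to replicate the proof of Lemma \ref{simplexunbalancing} (simplex unbalancing for trees) in the cycle setting, splitting into two cases according to whether $S_1$ and $S_2$ are nonadjacent or adjacent in $\CC$. In these two cases, $\stsumc$ is given (up to constants) by $D_{\mathrm{nonadj}}(\CC, S_1, S_2)$ or $D_{\mathrm{adj}}(\CC, S_1, S_2)$ respectively. In both expressions, the factors $\lambda_\theta^{|F_1|}$ and $\lambda_\phi^{|F_2|}$ attached to $S_1$ and $S_2$ appear explicitly, where $F_\ell$ denotes the set of free vertices of $S_\ell$ and $|F_\ell| = \dim(S_\ell) - 1 \geq 1$ (since each $S_\ell$ has exactly two non-free vertices in $\CC$ and $\dim(S_\ell) \geq 2$). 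These two factors sit outside of all sums parametrizing the chain(s) connecting $S_1$ to $S_2$, so they can be perturbed independently without disturbing the rest of the expression.

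Define $\psi(a_1, a_2)$ to be the expression obtained from $D_{\mathrm{nonadj}}$ (resp.\ $D_{\mathrm{adj}}$) by replacing $\lambda_\theta^{|F_1|}$ with $\lambda_\theta^{a_1}$ and $\lambda_\phi^{|F_2|}$ with $\lambda_\phi^{a_2}$, extended to complex exponents via $\lambda_\theta^z(w) = \exp(z\log \lambda_\theta(w))$. An iterated triangle inequality verifies the hypotheses of Corollary \ref{repeatedhadamard}, and applying it yields
\begin{equation*}
    \psi(|F_1|, |F_2|) \;\leq\; \psi(|F_1| + |F_2|, 0)^{s_1}\, \psi(0, |F_1| + |F_2|)^{s_2}
\end{equation*}
for some positive $s_1, s_2$ with $s_1 + s_2 = 1$. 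At the endpoint $(|F_1| + |F_2|, 0)$, the modified exponents correspond geometrically to relocating all $|F_2|$ free vertices of $S_2$ onto $S_1$, producing the cycle $\CC_1$. However, the prefactor $\Stab(\dim(S_1))^{-1}\Stab(\dim(S_2))^{-1}$ was not updated to reflect the new dimensions, so
\begin{equation*}
    \psi(|F_1| + |F_2|, 0) \;=\; \frac{\Stab(\dim(S_1) + |F_2|)\,\Stab(\dim(S_2) - |F_2|)}{\Stab(\dim(S_1))\,\Stab(\dim(S_2))}\, \sum_{\delta} \nu^2_{\CC_1}(\delta).
\end{equation*}

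An iterated application of Lemma \ref{cycleshelper} (moving one vertex at a time) shows that this stabilizer ratio equals $q^{c(\CC_1) - c(\CC)}$. Since $|V(\CC_1)| = |V(\CC)|$, for $s > \ppc(\CC_1)$ and $|E| \gtrsim q^s$ we obtain $\psi(|F_1| + |F_2|, 0) \lesssim |E|^{2|V(\CC)|}q^{-c(\CC)}$, and by the symmetric argument $\psi(0, |F_1| + |F_2|) \lesssim |E|^{2|V(\CC)|}q^{-c(\CC)}$ when $|E| \gtrsim q^{\ppc(\CC_2)}$. Combining with the Hadamard inequality gives $\stsumc \lesssim |E|^{2|V(\CC)|}q^{-c(\CC)}$ whenever $|E| \gtrsim q^{\max(\ppc(\CC_1), \ppc(\CC_2))}$, establishing the desired bound $\ppc(\CC) \leq \max(\ppc(\CC_1), \ppc(\CC_2))$. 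The main technical point is verifying the Hadamard three-lines hypothesis for $\psi$, but this follows from the same iterated triangle-inequality argument used in Lemma \ref{simplexunbalancing}; the only structural change is that the two $\lambda$-factors being perturbed are indexed by different group elements $\theta$ and $\phi$, which is harmless since the perturbations act on disjoint pieces of the expression.
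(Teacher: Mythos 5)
Your proposal is correct and follows essentially the same argument as the paper's proof: you split into the nonadjacent and adjacent cases, define $\psi(a_1, a_2)$ by replacing the $\lambda_\theta^{m-1}$ and $\lambda_\phi^{n-1}$ exponents in $D_{\mathrm{nonadj}}$ or $D_{\mathrm{adj}}$ with complex parameters, apply Corollary \ref{repeatedhadamard}, and account for the unadjusted stabilizer prefactors via an iterated application of Lemma \ref{cycleshelper}. The only differences are cosmetic (you parametrize by $|F_\ell| = \dim(S_\ell) - 1$ rather than $m-1, n-1$, and you make the complex extension and the independence of the $\theta$- and $\phi$-factors explicit), so this matches the paper's proof in all material respects.
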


\begin{proof} Throughout this proof, we denote the modification of the simplices $S_1$ and $S_2$ in $\CC_1$ as $S_1^+$ and $S_2^-$ respectively.  Similarly, we denote the modification of $S_1$ and $S_2$ in $\CC_2$ as $S_1^-$ and $S_2^+$. \bigskip

Suppose that $S_1$ and $S_2$ are nonadjacent, of dimensions $m$ and $n$ respectively.  Define the function $\psi(a_1, a_2): \mathbb{R}^2 \to \mathbb{R}$ as the expression $D_{\mathrm{nonadj}}(\CC, S_1, S_2)$, where we replace the exponents on $\lambda_{\theta}$ and $\lambda_{\psi}$ with $a_1$ and $a_2$ respectively.  The expression $D_{\mathrm{nonadj}}(\CC, S_1, S_2)$ can be written as $\psi(m-1, n-1)$, and by an application of Corollary \ref{repeatedhadamard}, we see that 
\begin{equation}
    \psi(m-1, n-1)\ \leq\ \psi(m+n-2, 0)^{s_1}  \psi(0, m+n-2)^{s_2}
\end{equation} 
for some positive $s_i$ such that $s_1 + s_2 = 1$.  Note that the only difference between $\psi(m+n-2, 0)$ and $D_{\mathrm{nonadj}}(\CC_1, S_1^+, S_2^-)$ is the stabilizer terms out front.  This is because in the process of applying Hadamard three-lines above, we changed the dimension of $S_1$ and $S_2$ by changing the power on the $\lambda_{\theta}$ terms, but didn't change the stabilizer terms accordingly.  Therefore, we have that

\begin{equation}
    \psi(k_1 + k_2, 0)\ =\ \left( \frac{\Stab(\dim(S_1)+\dim(S_2) - 1)}{\Stab(\dim(S_1))} \cdot \frac{\Stab(1)}{\Stab(\dim(S_2))} \right) D_{\mathrm{nonadj}}(\CC_1, S_1^+, S_2^-).
\end{equation}

By a repeated application of Lemma \ref{cycleshelper}, we have that
\begin{equation}
    \left( \frac{\Stab(\dim(S_1)+\dim(S_2) - 1)}{\Stab(\dim(S_1))} \cdot \frac{\Stab(1}{\Stab(\dim(S_2))} \right) \frac{1}{q^{c(\CC_1)}}\ =\ \frac{1}{q^{c(\CC)}}.
\end{equation}
After noting that $|V(\CC_1)| = |V(\CC)|$, we have that for $s > \ppc(\CC_1)$,
\begin{align}
    \psi(k_1 + k_2, 0)\ &\lesssim\ \left( \frac{\Stab(\dim(S_1)+k_2)}{\Stab(\dim(S_1))} \cdot \frac{\Stab(\dim(S_2)-k_2)}{\Stab(\dim(S_2))} \right) \left(\eoq{2|V(\CC_1)|}{c(\CC_1)}\right) \\ 
    &\lesssim \ \eoq{2|V(\CC)|}{c(\CC)}.
\end{align} 
By an analogous argument, we also have that for $s > \ppc(\CC_2)$, that $\psi(0, k_1 + k_2) \lesssim E^{2|V(\CC_2)|}q^{-c(\CC_2)}$. 
 Therefore, whenever $s > \max(\ppc(\CC_1),\ppc(\CC_2)) $, we may conclude that $\stsumc \approx \psi(k_1, k_2) \lesssim E^{2|V(\CC)|}q^{-c(\CC)}$. \bigskip

 The proof for adjacent simplices $S_1$ and $S_2$ follows similarly but with $D_{\text{adj}}$ instead. \bigskip
\end{proof}

Given any cycle of simplices $\mathcal{C}$ with at least three simplices, by applying simplex unbalancing repeatedly and noting that this operation preserves the sum of the dimensions of simplices in $\CC$, we arrive at Proposition \ref{goalof5.1}.  All that remains is to find an $s$ such that $E$ contains a positive proportion of congruence classes of $\CC'$ whenever $|E| \gtrsim q^s$.

\subsection{Technical obstructions}

For the rest of this section, we describe our attempts to bound the number of pairs of congruent embeddings of $\CC'$ in $E$, and the obstacles that prevent us from applying a generalization of the methods in Section 3 and \cite{preprint}. We assume that $N$, the dimension of the large simplex in $\CC'$, is larger than $d$ so that we don't have additional stabilizer terms in the sums that we're working with.  This is to add clarity to where technical obstructions arise, but the same issues occur with the stabilizer terms present.  \bigskip

We define $P_{t_1, \ldots, t_{k-1}}(u, u')$ as the number of paths $(x_1, \ldots, x_k)$ in $E$ of length $k-1$ such that $x_1 = u$, $x_2 = u'$, and $||x_{i+1} - x_i|| = t_i$ for all $1 \leq i \leq k-1$.  Let $\CC'$ be a cycle of simplices with one $N$-dimensional simplex and $k-1$ 1-dimensional simplices.  As in Section 4.3, to find an $s$ such that $E$ contains a positive proportion of congruence classes of $\CC'$ whenever $|E| \gtrsim q^s$, we bound the sum

\begin{equation}
    \stsumc\ \approx\ \sum_{t_1, \ldots, t_{k-1}} \sum_{\theta, w} \lambda_{\theta}^{N-1}(w) \sum_{\substack{u_1, u_1', u_2, u_2' \\ u_1 - \theta u_1' \\ = u2 - \theta u_2' \\ = w }} P_{t_1, \ldots, t_{k-1}}(u_1, u_2) P_{t_1, \ldots, t_{k-1}}(u_1', u_2')
\end{equation}
As in Section 4.3, we may assume that $t_1, \ldots t_k$ range over nonzero elements in $\fqd$ rather than all of $\fqd$.  We fix a congruence class of paths by fixing $t_1, \ldots t_{k-1}$, and drop the $t_i$s from the notation from $P$ for notational convenience.  Then, similar to the goal of Section 3, we want to show that

\begin{equation}
    \sum_{\theta, w} \lambda_{\theta}^{N-1}(w) \sum_{\substack{u, u', v, v' \\ u - \theta u' \\ = v - \theta v' \\ = w }} P(u, v) P(u', v') \ \lesssim\ {\eoq{2|V(\CC)|}{c(\CC)+k-1}}
\end{equation}
Following the work of Section 3 and of \cite{preprint}, we would like to define a function of the form
\begin{equation}
    \beta_{\theta}(w)\ =\ \sum_{\substack{u, u', v, v' \\ u - \theta u' = \\ v - \theta v' = w}} P(u, v) P(u', v') 
\end{equation}
We first calculate the zero Fourier coefficient of $\beta_{\theta}$ below.
\begin{align}
    \widehat{\beta}_{\theta}(0) & \ = \ \frac{1}{q^d} \sum_{\substack{u, v, u', u' \\ || u - v || = || u' - v' ||}} P(u, v)P(u', v') \\
    &= \  \frac{1}{q^d} \sum_{u, v} P(u, v) \sum_{\substack{u', v' \\ || u - v || = || u' - v' ||}} P(u', v') \\ 
    &\approx \ \frac{1}{q^d} \cdot \frac{|E|^k}{q^{k-1}} \cdot \frac{|E|^k}{q^k} \ = \ \frac{|E|^{2k}}{q^{d + 2k - 1}} \label{obstructionline2}
\end{align}

The first equality of \eqref{obstructionline2} follows from the fact that $E$ contains a statistically correct number of cycles of length $k$ for $|E| \gtrsim q^{(d+2)/2}$ by \cite{cycles} (with the technical condition that $k > 4$), and that $E$ contains a statistically correct number of paths of length $k-1$ for a lower threshold by \cite{longpaths}. \bigskip

We have that
\begin{equation}
    \sum_{\theta, w} \lambda_{\theta}^{N-1}(w) \sum_{\substack{u, u', v, v' \\ u - \theta u' \\ = v - \theta v' \\ = w }} P(u, v) P(u', v') \ = \ \sum_{\theta, w} \lambda_{\theta}^{N-1}(w) \beta(w) \label{futureworkneedtobound}
\end{equation}
This is a sum of a very similar form as the sum $\sum_{\theta, w} \lambda_{\theta}^n(w) \Gamma_{T_k, \theta}(w)$ in Section 3.  Therefore, we try to apply the method of proof of Lemma \ref{hammerindd} to bound this term.  However, this is not effective, since if we attempt to break up $\sum_{\theta, w}\lambda_{\theta}^{N-1}(w) \beta(w)$ in the same way as in Lemma \ref{hammerindd}, the term $\sum_{w, \theta} \lambda_{\theta}^{N-1}\widehat{\beta}_{\theta}(0)$ is too large by a factor of $q^{d-1}$. \bigskip

Another idea is to instead consider the function $\alpha_{\theta}: \fqd \times \fqd \to \mathbb{R}$ defined as 
\begin{equation}
    \alpha_{\theta}(w_1, w_2)\ =\ \sum_{\substack{u, u', v, v' \\ u - \theta u' = w_1 \\ v - \theta v' = w_2}} f(u, v) f(u', v') 
\end{equation}
We note that $\alpha_{\theta}(w, w) = \beta_{\theta}(w)$.  The benefit of $\alpha$ is that after taking the Fourier transform of $\alpha$ in both variables, $\widehat{\alpha}_{\theta}(0, 0)$ is smaller than $\widehat{\beta}_{\theta}(0)$.  More precisely, in a very similar manner as for $\Gamma_{T_k, \theta}$ in Section 3, we see that
\begin{equation}
    \widehat{\alpha}_{\theta}(m_1, m_2)\ =\ q^{2d} \widehat{P}(m_1, m_2) \overline{\widehat{P} (\theta^{-1}m_1, \theta^{-1}m_2)}
\end{equation}
We now see that 
\begin{equation}
    \widehat{\alpha}_{\theta}(0, 0)\ \approx\ \frac{|E|^{2k}}{q^{2d + 2k - 2}},
\end{equation} which is a factor of $q^{d-1}$ smaller than $\widehat{\beta}_{\theta}(0)$.  Therefore, we define $B_n$ as $\sum_{\theta,w}\lambda_\theta^n(w)\alpha_{\theta}(w, w)$ to break up our sum instead as 
\begin{align}
    B_n\ =\ \sum_{\theta,w}\lambda_\theta^n(w)\alpha_{\theta}(w, w)\ &\leq\ \left| \sum_{\theta,w}\lambda_\theta^{n-1}(w)\left(\lambda_\theta(w)-\frac{|E|^2}{q^d}\right)(\alpha_{\theta}(w, w)-\widehat{\alpha}_{\theta}(0, 0))\right|\nonumber\\
    &\hspace{0.5cm}+\  \sum_{w,\theta}\lambda_\theta^n(w)\widehat{\alpha}_{\theta}(0, 0)+\frac{|E|^2}{q^{d}}B_{n-1} \\
     &=:\ I + II+ III. \label{thethreenewterms}
\end{align}

As $\widehat{\alpha}_{\theta}(0, 0)$ is sufficiently small, the term $II$ is of the proper size.  Term $I$ can be bounded nontrivially by pulling absolute values inside, taking the supremum of $\lambda_{\theta}^{n-1}(w)$, and applying Cauchy Schwarz.  We handle the term of the form $\sum_w \left(\alpha_{\theta}(w, w) - \widehat{\alpha}_{\theta}(0,0)\right)^2$ by dominating the sum over $w$ by sums over $w_1, w_2$, and then running an analogous argument to \ref{Gammaminuszerofourierd}.  This gives us that
\begin{equation}
    \sum_{\theta, w} \left(\alpha_{\theta}(w, w) - 
    \widehat{\alpha}_{\theta}(0,0)\right)^2 \ \approx \ q^{2d}\left( \eoq{4k-4}{4k-4}\right).
\end{equation}
Plugging in bounds from \ref{lambdathetafactsd} gives us that $I$ is on the order of $E^{2|V(\CC)|}q^{-c(\CC)-k+1}$ for an $s$ slightly larger than that of an $N$ simplex, matching what we morally expect.  \bigskip

As we bound term $III$ by induction, it remains to bound the base case $B_1$, which we are unfortunately unable to bound effectively.  In the spirit of how we bound $A_1$ in Lemma \ref{hammerindd}, one may easily generalize Lemma \ref{aimlemma39} to functions $\fqd \times \fqd \to \mathbb{R}$ by modifying the proof seen in \cite{groupactions}.  However, our bound for $B_1$ is still too big by a factor of $q^{d-1}$. If we reduce the base case to $B_0$, the $L_1$ norm of $\alpha$ is too large by a factor of $q^{d-1}$.  Finally, if we break up $B_1$ slightly differently in hopes to extract cancellation from the negative term, we are still off by the same factor.  \bigskip

The recurring issue is that we need for one of $\alpha$ or $\beta$, both its $L_1$ norm and zero Fourier coefficient to be small.  While $\widehat{\alpha}_{\theta}(0, 0)$ is smaller than $\widehat{\beta}_{\theta}(0)$, the $L_1$ norm of $\alpha_{\theta}$ is much larger than $\beta_{\theta}$.  We have this difference because we get an extra factor of $q^d$ in the denominator by taking the Fourier transform of both coordinates for $\alpha_{\theta}$.  What we believe needs to happen is for the negative contribution of term $I$ (without the absolute values) to cancel with part of term $III$ of \ref{thethreenewterms}.  Therefore, we need to somehow extract cancellation between terms $I$ and $III$, potentially involving results on number-theoretic sums.  We also propose a potential workaround in Section 6.2.



\section{Future work}

\subsection{Edge gluing}

We end this paper by discussing a natural generalization of the work in the past two sections, considering structures of simplices where two simplices may share an edge or some higher dimensional face rather than a vertex.  The Hadamard three-lines framework established in Section 4 can be applied to such a situation, allowing us to reduce to a simple class of simplex structures that encourages the use of a combination of group action and inductive techniques.  To illustrate this, we work through a simple example. \bigskip

\begin{figure}[hbt!]
\centering

\tikzset{every picture/.style={line width=1pt}} 

\begin{tikzpicture}[x=0.75pt,y=0.75pt,yscale=-1,xscale=1]

\draw  [color={rgb, 255:red, 208; green, 2; blue, 27 }  ,draw opacity=1 ] (521,100.17) -- (473.9,165) -- (397.69,140.23) -- (397.69,60.1) -- (473.9,35.34) -- cycle ;
\draw [color={rgb, 255:red, 208; green, 2; blue, 27 }  ,draw opacity=1 ]   (397.69,60.1) -- (473.9,165) ;
\draw [color={rgb, 255:red, 208; green, 2; blue, 27 }  ,draw opacity=1 ]   (397.69,140.23) -- (521,100.17) ;
\draw [color={rgb, 255:red, 208; green, 2; blue, 27 }  ,draw opacity=1 ]   (397.69,60.1) -- (521,100.17) ;
\draw [color={rgb, 255:red, 208; green, 2; blue, 27 }  ,draw opacity=1 ]   (473.9,35.34) -- (473.9,165) ;
\draw [color={rgb, 255:red, 208; green, 2; blue, 27 }  ,draw opacity=1 ]   (397.69,140.23) -- (473.9,35.34) ;
\draw [color={rgb, 255:red, 74; green, 144; blue, 226 }  ,draw opacity=1 ]   (473.9,35.34) -- (542.11,40.33) ;
\draw [color={rgb, 255:red, 74; green, 144; blue, 226 }  ,draw opacity=1 ]   (521,100.17) -- (542.11,40.33) ;
\draw [color={rgb, 255:red, 0; green, 0; blue, 0 }  ,draw opacity=1 ]   (473.9,35.34) -- (521,100.17) ;
\draw    (163.11,48.33) -- (163.11,146.33) ;
\draw [color={rgb, 255:red, 74; green, 144; blue, 226 }  ,draw opacity=1 ]   (163.11,48.33) -- (251.11,116.56) ;
\draw [color={rgb, 255:red, 74; green, 144; blue, 226 }  ,draw opacity=1 ]   (163.11,146.33) -- (235.75,61.98) ;
\draw [color={rgb, 255:red, 74; green, 144; blue, 226 }  ,draw opacity=1 ]   (163.11,48.33) -- (235.75,61.98) ;
\draw [color={rgb, 255:red, 74; green, 144; blue, 226 }  ,draw opacity=1 ]   (235.75,61.98) -- (251.11,116.56) ;
\draw [color={rgb, 255:red, 74; green, 144; blue, 226 }  ,draw opacity=1 ]   (163.11,146.33) -- (251.11,116.56) ;
\draw [color={rgb, 255:red, 208; green, 2; blue, 27 }  ,draw opacity=1 ]   (163.11,146.33) -- (74.21,79.75) ;
\draw [color={rgb, 255:red, 208; green, 2; blue, 27 }  ,draw opacity=1 ]   (163.11,48.33) -- (91.61,134.02) ;
\draw [color={rgb, 255:red, 208; green, 2; blue, 27 }  ,draw opacity=1 ]   (163.11,146.33) -- (90.3,134.04) ;
\draw [color={rgb, 255:red, 208; green, 2; blue, 27 }  ,draw opacity=1 ]   (90.3,134.04) -- (74.21,79.75) ;
\draw [color={rgb, 255:red, 208; green, 2; blue, 27 }  ,draw opacity=1 ]   (161.81,48.35) -- (74.21,79.75) ;

\draw (58,188) node [anchor=north west][inner sep=0.75pt]   [align=left] {{\small $\displaystyle G$, two 3-simplices sharing an edge}};
\draw (442,190) node [anchor=north west][inner sep=0.75pt]   [align=left] {$ $};
\draw (341,190) node [anchor=north west][inner sep=0.75pt]   [align=left] {{\small $\displaystyle H$, a 4-simplex and a 1-simplex sharing an edge}};
\end{tikzpicture}
\caption{An example of Hadamard three-lines simplification for edge-gluing.}
\label{edgegluingfig}
\end{figure}

Consider the graph $G$ formed by two 3-simplices $S_1, S_2$ sharing an edge, as shown in Figure \ref{edgegluingfig}.  We would like to find an $s$ such that for $E \subset \mathbb{F}_q^3$, $E$ contains a positive proportion of congruence classes of $G$ whenever $|E| \gtrsim q^s$.  Note that there are on the order of $q^{11}$ congruence classes of $G$ in $\mathbb{F}_q^d$.  Denoting $\mathbb{D}$ as the set of congruence classes of $G$, and $\nu_G(\delta)$ as the number of embeddings of $G$ in $E$ in the congruence class $\delta$, by the Cauchy Schwarz setup as presented in Section 4 it is sufficient to find an $s$ such that for $|E| \gtrsim q^s$,
\begin{equation}
    \sum_{\delta \in \mathbb{D}} \nu_G^2(\delta) \ \lesssim\ {\eoq{12}{11}}
\end{equation}
For two embeddings $h_1, h_2: V(G) \to E$ of $G$, $h_1$ and $h_2$ are congruent if and only if there exist $\theta, \phi \in \oq{3}$ and $w_1, w_2 \in \mathbb{F}_q^3$ such that $h_1(v) = \theta h_2(v) + w_1$ for all $v \in S_1$ and $h_1(v) = \phi h_2(v) + w_2$ for all $v \in S_2$.  Therefore, we see by a similar argument as in Section 4.1 that 

\begin{equation}
     \sum_{\delta \in \mathbb{D}} \nu_G^2(\delta) \ \approx\ \sum_{\theta, \phi \in \oq{d}} \sum_{\substack{x, y \in \mathbb{F}_q^d \\ x - \theta x' = y - \theta y' \\ x - \phi x' = y - \phi y'} }\lambda_{\theta}^2( x - \theta x') \lambda_{\phi}^2(x - \phi x')
\end{equation}
Note that in general stabilizer terms are introduced (as in Section 4.1), but in this example the stabilizer terms are trivial.  Applying Hadamard three-lines in the style of simplex unbalancing gives us that
\begin{equation}
     \sum_{\delta \in \mathbb{D}} \nu_G^2(\delta) \ \lesssim\ \sum_{\theta, \phi \in \oq{d}} \sum_{\substack{x, y \in \mathbb{F}_q^d \\ x - \theta x' = y - \theta y' \\ x - \phi x' = y - \phi y'} }\lambda_{\theta}^3( x - \theta x') \lambda_{\phi}^1(x - \phi x'), \label{futureworkgasum}
\end{equation}
where the right hand side is on the order of $\sum_{\delta \in \mathbb{D}} \nu_H^2(\delta)$, where $H$ is the graph shown in Figure \ref{edgegluingfig}.  Therefore, to find an $s$ for $G$, it suffices to find an $s$ such that for $|E| \gtrsim q^s$,
\begin{equation}
    \sum_{\delta \in \mathbb{D}} \nu_H^2(\delta) \ \lesssim\ {\eoq{12}{11}}.
\end{equation}


In more complicated simplex structures where all simplices are connected either by vertices or edges, an inductive Hadamard three-lines approach similar to Section 4 can reduce to the case where $\TT$ is a large simplex $S$ where at a single vertex, standard (1-simplex) trees are attached, and at a single edge, tree structures formed by gluing 2-links (shown in Figure \ref{twolink}) are attached.  \bigskip

\begin{figure}[hbt!]
\centering

\tikzset{every picture/.style={line width=1pt}} 

\begin{tikzpicture}[x=0.75pt,y=0.75pt,yscale=-1,xscale=1]

\draw    (176,44) -- (202.53,93.33) ;
\draw    (176,44) -- (234.11,44) ;
\draw    (234.11,44) -- (202.11,93) ;
\draw    (202.53,93.33) -- (260.11,93.33) ;
\draw    (234.11,44) -- (260.11,93.33) ;
\draw    (234.11,44) -- (292.22,44) ;
\draw    (260.11,93.33) -- (317.69,93.33) ;
\draw    (317.69,93.33) -- (375.27,93.33) ;
\draw    (292.22,44) -- (350.33,44) ;
\draw    (350.33,44) -- (408.44,44) ;
\draw    (375.27,93.33) -- (432.84,93.33) ;
\draw    (292.22,44) -- (260.11,93.33) ;
\draw    (349.8,44) -- (317.69,93.33) ;
\draw    (407.38,44) -- (375.27,93.33) ;
\draw    (291.69,44) -- (317.69,93.33) ;
\draw    (350.33,44) -- (376.33,93.33) ;
\draw    (408.44,44) -- (432.84,93.33) ;

\end{tikzpicture}
\caption{A 2-link of length 8}
\label{twolink}
\end{figure}
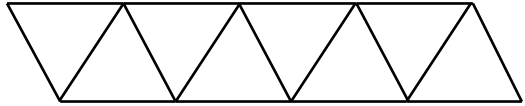

Therefore, 2-links and trees of 2-links form a natural analogue to paths and trees in the edge-gluing setting.  Their inductive nature suggests that there exists an $s$ such that $E \subset \mathbb{F}_q^d$, $|E| \gtrsim q^s$ implies that $E$ contains a statistically correct number of 2-links of arbitrary length in any congruence class, similar to chains and trees of simplices as in the work of \cite{longpaths} and \cite{small2023}.  Considering simplex structures where simplices can share up to $n$-dimensional faces, a similar Hadamard three-lines approach can be used to reduce to a large simplex with $k$-links attached (defined analogously) for $1 \leq k \leq n$.  \bigskip

In attempting to bound $\sum_{\delta \in \mathbb{D}} \nu_H^2(\delta)$ in a similar manner to Section 5.2, we run into similar technical obstructions.  We propose a different approach in the following subsection.

\subsection{Thresholds}

Intuitively, it is clear that if $E$ contains a positive proportion of congruence classes in $\fqd$ of a graph $G$ whenever $E \gtrsim q^s$, then for $E \gtrsim q^{s+\varepsilon}$, $E$ should contain "almost all" of the congruence classes of $G$ in $\mathbb{F}_q^d$.  However, the rate at which this threshold phenomenon acts, and the precise definition of "almost all," is of importance as it may provide a different approach to finding such an $s$ for graphs $G$ and $H$ sharing a vertex, edge, or more complicated subgraph that avoids the obstacles presented in Sections 5 and 6.  \bigskip

Suppose we have two arbitrary graphs $G$ and $H$, and we pick an $s$ such that for $E \subset \mathbb{F}_q^d, |E| \gtrsim q^s$, $E$ contains a positive proportion of congruence classes of $G$ in $\mathbb{F}_q^d$, and $E$ contains a statistically correct number of embeddings of every congruence class of $H$ in $\mathbb{F}_q^d$.  This is the setup that we have in Section 3, where $G$ is a simplex and $H$ is a $k$-weak simplex tree.  Being able to quantify how many congruence classes of $G$ that $E$ contains when $|E| \gtrsim q^{s+\varepsilon}$ could open the door to showing that $E$ contains a positive proportion of congruence classes of $G$ attached to $H$ at a vertex/edge/some subgraph via a probabilistic argument and concentration inequalities.  An approach such as this would avoid the need to work with the explicit sums  corresponding to $G$ and $H$ attached at some subgraph, bypassing the obstacles found in Sections 5 and 6 that occur while trying to separate out the individual contributions from $G$ and $H$ in these sums.

\section{Acknowledgements}

We thank Prof. Brian McDonald for several helpful conversations. We also thank Jacques Hadamard for his great result as we celebrate his 159 birthday this year.\bigskip

This paper was written as part of the SMALL REU Program 2024.  We appreciate the support of Williams College, as well as funding from NSF grant DMS-2241623, Duke University, Emmanuel College Cambridge, the Finnerty Fund, Princeton University, University of Michigan, and Williams College. A.I. was supported in part by the NSF under grant no. 2154232. A.I. wishes to thank the Isaac Newton Institute (INI) in Cambridge, Great Britain, for their support and hospitality. This work was completed during the INI program, entitled "Multivariate approximation, discretization, and sampling recovery."

\printbibliography
\end{document}